\renewcommand{\eprint}[1]{\href{https://arxiv.org/abs/#1}{#1}}
\makeatletter \@addtoreset{equation}{section} \makeatother
\newcommand{\ard}{{\mathbf{d}}}
\newcommand{\Complex}{\mathbb{C}}
\newcommand{\ellSN}{\mathop{\operator@font sn}\nolimits}
\newcommand{\ellCN}{\mathop{\operator@font cn}\nolimits}
\newcommand{\ellDN}{\mathop{\operator@font dn}\nolimits}
\newcommand{\ellAM}{\mathop{\operator@font am}\nolimits}
\newcommand{\ellK}{\mathop{\smash{\operator@font K}\vphantom{a}}\nolimits}
\newcommand{\ellE}{\mathop{\smash{\operator@font E}\vphantom{a}}\nolimits}
\newcommand{\beq}{\begin{equation}}
\newcommand{\eeq}{\end{equation}}
\def\mr@ignsp#1 {\ifx\:#1\@empty\else #1\expandafter\mr@ignsp\fi}%
\newcommand{\multiref}[1]{\begingroup
\xdef\mr@no@sparg{\expandafter\mr@ignsp#1 \: }%
\def\mr@comma{}%
\@for\mr@refs:=\mr@no@sparg\do{\mr@comma\def\mr@comma{,}\ref{\mr@refs}}%
\endgroup}
\newcommand{\hypref}[2]{\ifx\href\asklfhas #2\else\href{#1}{#2}\fi}
\newcommand{\Secref}[1]{Section~\multiref{#1}}
\newcommand{\secref}[1]{Sec.~\multiref{#1}}
\newcommand{\Appref}[1]{Appendix~\multiref{#1}}
\newcommand{\tabref}[1]{Tab.~\multiref{#1}}
\newcommand{\figref}[1]{Fig.~\multiref{#1}}
\renewcommand{\eqref}[1]{(\multiref{#1})}
\def\[{\begin{equation}}
\def\]{\end{equation}}
\def\<{\begin{eqnarray}}
\def\>{\end{eqnarray}}
\newtheorem{theorem}{Theorem}[section]
\newtheorem{lemma}[theorem]{Lemma}
\newtheorem{proposition}[theorem]{Proposition}
\newtheorem{corollary}[theorem]{Corollary}
\newtheorem{conjecture}[theorem]{Conjecture}
\newtheorem{definition}[theorem]{Definition}
\asklfhas\newcommand{\href}[2]{#2}\fi
\title{A-type Quiver Varieties and ADHM Moduli Spaces}
\author{Peter Koroteev} 
\address{Department of Mathematics\\ University of California\\ Davis CA 95616\\ United States of America}
\address{Department of Mathematics\\ University of California\\ Berkeley CA 94720\\ United States of America}
\address{Kavli Institute for Theoretical Physics\\ University of California\\ Santa Barbara CA 93106\\ United States of America}
\email{pkoroteev@math.ucdavis.edu}
\begin{document}
\maketitle

\begin{abstract}
We study quantum geometry of Nakajima quiver varieties of two different types -- framed A-type quivers and ADHM quivers. While these spaces look completely different we find a surprising connection between equivariant K-theories thereof with a nontrivial match between their equivariant parameters. In particular, we demonstrate that quantum equivariant K-theory of $A_n$ quiver varieties in a certain $n\to\infty$ limit reproduces equivariant K-theory of the Hilbert scheme of points on $\mathbb{C}^2$. We analyze the correspondence from the point of view of enumerative geometry, representation theory and integrable systems. We also propose a conjecture which relates spectra of quantum multiplication operators in K-theory of the ADHM moduli spaces with the solution of the elliptic Ruijsenaars-Schneider model.
\end{abstract}

\setcounter{tocdepth}{1}
\tableofcontents

\section{Introduction}
Recent study of geometry of symplectic resolutions \cite{Braverman:2010ei,2012arXiv1211.1287M,Okounkov:2015aa,Okounkov:2018,Okounkov:2017} and, in particular, Nakajima quiver varieties \cite{Nakajima:2001qg}, has uncovered a deep relationship between enumerative geometry, representation theory and integrable systems. Using the theory of quasimaps developed in \cite{Ciocan-Fontanine:2011tg} an explicit relation between quantum K-theory of quiver varieties and quantum difference equations of Knizhnik-Zamolodchikov-type (qKZ) \cite{Frenkel:92qkz} was found \cite{Okounkov:2015aa,Okounkov:2016sya}. 

Within this framework it was possible to understand the ring structure of K-theory of Nakajima varieties as Bethe algebras and explicit formulae for quantum multiplication \cite{Pushkar:2016} as well as an explicit geometric interpretation by Aganagic and Okounkov of qKZ equations \cite{Aganagic:2017be} and their connection to integrable systems of Calogero-Ruijsenaars-Schneider type \cite{Koroteev:2018}. 
It was also demonstrated \cite{Koroteev:2017, Koroteev:2018} that the foundational results by Givental and Lee \cite{2001math8105G} in quantum K-theory of compact K\"ahler spaces, such as flag varieties, can be reproduced in this framework in the limit when the equivariant volume of cotangent fibers in the corresponding quiver variety vanishes. This limit also transforms the Ruijsenaars-Schneider model into the q-Toda chain.

In physics literature there was an independent breakthrough in understanding of the connection between integrable systems and quantum geometry. In seminal papers by Nekrasov and Shatashvili \cite{Nekrasov:2009ui,Nekrasov:2009uh} an equivalence between the spaces of solutions of Bethe Ansatz equations for XXX (XXZ) spin chains and quantum cohomology (K-theory) of A-type quiver varieties was conjectured (later it was proven in \cite{Pushkar:2016,Koroteev:2017}). In \cite{Gaiotto:2013bwa, Bullimore:2015fr} the so-called quantum/classical duality between XXZ spin chains and integrals of motion of the trigonometric Ruijsenaars-Schneider (tRS) model was formulated. This lead us to a clear understanding of quantum K-theory of the quiver varieties in question in terms of the tRS system \cite{Koroteev:2017}. 

\subsection{Goals of the Paper}
A collection of vertices and oriented edges connecting them forms a quiver. A Nakajima quiver variety can be thought of as a cotangent bundle to spaces of representations of such quivers modulo the automorphisms of vertices. Currently in the literature two types of Nakajima quiver varieties attract a significant amount of interest -- ADE-type quivers and their affine cousins, which are ubiquitous to many branches of mathematics (we shall focus solely on the A-type quivers), and Atiyah-Drinfeld-Hitchin-Manin (ADHM) quivers which arise in the study of moduli spaces of sheaves on surfaces \figref{fig:QuiverVarieryNak}. These two types of quiver varieties are typically discussed independently and the goal of the current paper is to build a bridge between geometric properties of these spaces. 
\begin{figure}[!h]
\begin{tikzpicture}[scale =1.2]
\draw [ultra thick] (0,0) -- (3,0);
\draw [ultra thick] (3,1) -- (3,0);
\draw [fill] (0,0) circle [radius=0.1];
\draw [fill] (1,0) circle [radius=0.1];
\draw [fill] (2,0) circle [radius=0.1];
\draw [fill] (3,0) circle [radius=0.1];
\node (1) at (0.1,-0.3) {$\mathbf{v}_1$};
\node (2) at (1.1,-0.3) {$\mathbf{v}_2$};
\node (3) at (2.1,-0.3) {$\ldots$};
\node (4) at (3.1,-0.3) {$\mathbf{v}_{n-1}$};
\fill [ultra thick] (3-0.1,1) rectangle (3.1,1.2);
\node (5) at (3.1,1.45) {$\mathbf{w}_{n-1}$};
\end{tikzpicture}
\qquad \qquad \qquad
\begin{tikzpicture}[xscale=1.5, yscale=1.5]
\fill [ultra thick] (1-0.1,1) rectangle (1.1,1.2);
\draw [fill] (1,0) circle [radius=0.1];
\draw [-, ultra thick] (1,1) -- (1,0.1);
\draw[-, ultra thick]
(1,0) arc [start angle=90,end angle=450,radius=.3];
\node (1) at (1.4,1.15) {$\mathscr{W}$};
\node (2) at (1.4,0.2) {$\mathscr{V}$};
\end{tikzpicture}
\caption{Left: $A_{n-1}$ quiver variety with framing on the last node (cotangent bundle to a flag variety). Right: The ADHM quiver.}
\label{fig:QuiverVarieryNak}
\end{figure}
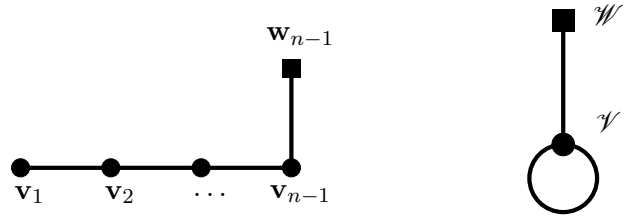 

Indeed, at first, these two quiver varieties look completely different -- the former is $A_n$-type quiver with $n$ vertices, while the latter has only one framed vertex and a loop. Yet we claim that there is a nontrivial correspondence between equivariant K-theories of these varieties. In particular, we shall demonstrate (see Theorems \ref{Th:EmbeddingTh} and \ref{Th:EmbeddingSheavesN}) the equivalence between the following spaces
\begin{itemize}
\item $\mathsf{T}$-equivariant K-theory of the moduli space of genus zero quasimaps to $A_n$-type quiver $X_n$ shown on the left in \figref{fig:QuiverVarieryNak}. 
\begin{equation}
K_{\mathsf{T}}(\textbf{QM}(\mathbb{P}^1,X_n)). \notag
\end{equation}
We shall work with $\textbf{w}_{n-1}=n$ and $\textbf{v}_{i}=i$ for $i=1,\dots,n-1$; then $X_n$ is called the cotangent bundle to \textit{complete} flag variety in $\mathbb{C}^n$.
In the above $\mathsf{T}$ is the maximal torus of $GL(n,\mathbb{C})\times \mathbb{C}_q^\times\times \mathbb{C}_\hbar^\times$, where $GL(\textbf{w}_i,\mathbb{C})\times \mathbb{C}^{\times}_\hbar$
acts as automorphisms of $X_n$ and $\mathbb{C}^\times_{\hbar}$ scales the cotangent directions with character $\hbar$\footnote{To be more precise, $\hbar$ is the class in the representation ring of the weight one representation.}, while $\mathbb{C}^\times_{q}$ acts multiplicatively on the base curve.

\item $\mathbb{C}_q^\times\times \mathbb{C}_\hbar^\times$-equivariant K-theory of the ADHM moduli space 
\begin{equation}
\bigoplus\limits_{l=0}^k K_{q,\hbar}(\mathcal{M}_{\text{ADHM}})\,. \notag
\end{equation}
Geometrically $\mathcal{M}_{\text{ADHM}}$ shown on the right of \figref{fig:QuiverVarieryNak} describes the moduli space of torsion-free sheaves of rank $N$ on $\mathbb{C}^2$, where $\mathscr{V}$ is a tautological bundle of rank $l$ and $\mathscr{W}=\mathbb{C}^N$ is trivial bundle of rank $N$. We assume that $l<n$ for all $l$. For $N=1$ this moduli space coincides with the Hilbert scheme of $l$ points on $\mathbb{C}^2$. The maximal torus $\mathbb{C}_q^\times\times \mathbb{C}_\hbar^\times$ acts naturally on $\mathbb{C}^2$ by dilations.
\end{itemize}

Then we study the limit $n\to\infty$ which will allow us to construct an embedding of the direct sum of Hilbert schemes of all points into $X_\infty$. 

Under this duality we uncover a highly nontrivial identification of the equivariant parameters: in K-theory of $A_n$ quiver $q$ was acting on the base curve, while on the ADHM side it is one of the equivariant parameters of $\mathbb{C}^2$, which is on equal footing with $\hbar$.

\subsection{Motivation}
Our construction is partly motivated by topological phase transitions and geometric engineering \cite{Gopakumar:aa} in string theory, as well as recent progress in understanding of quantum one-dimensional hydrodynamical models \cite{Koroteev:2016,Koroteev:2018a} in the context of supersymmetric gauge theories and quantum geometry. K. Costello performed somewhat similar analysis in the context of holography for M2 branes \cite{Costello:2017aa}. 

The correspondence which we are studying in this work is based on the conifold transition which connects two different desingularizations of the conifold geometry:
\begin{itemize}
\item M-theory on \textit{deformed} conifold $T^*S^3$ with $n$ M5 branes (six-dimensional objects) wrapping zero section $S^3$ and a three-manifold $M_3$.
\item M-theory on \textit{resolved} conifold $\mathscr{O}(-1)_{\mathbb{P}^1}^{\oplus 2}$. 
\end{itemize}
The former construction, in the presence of certain defects as well as flux $\hbar$ through one of the complimentary complex planes and after dimensional reduction, leads to gauge theory on $S^1\times\mathbb{C}_q$ whose quiver description is given on the left in figure \figref{fig:QuiverVarieryNak}. The latter M-theory construction sources a five-dimensional gauge theory on $S^1\times \mathbb{C}_q\times\mathbb{C}_\hbar$.  The moduli space of instantons in this 5d theory is given by the ADHM quiver on the right of \figref{fig:QuiverVarieryNak} (see \cite{Nakajima:2014aa} for details on instanton counting). The Gopakumar-Vafa topological transition occurs in the $n\to\infty$ limit when the deformed conifold is replaced by the resolved conifold. In Section 5 of \cite{Koroteev:2016} the geometric transition which is relevant for this paper is illustrated using brane constructions of Hanany-Witten type. 

In the present paper we shall examine this interesting duality from the point of view enumerative geometry, representation theory and integrable systems.

\subsection{Structure of the Paper}
In \Secref{Sec:qKZtRS} we review the construction of quantum K-theory of Nakajima quiver varieties with focus on K-theory of $X_n$. We compute equivariant K-theoretic vertex functions for those spaces and prove in Theorem \ref{Th:tRSeqproof} that they serve as the eigenfunctions of tRS Hamiltonians which act by shifts on quantum parameters. Their eigenvalues are symmetric functions of equivariant parameters $a_1,\dots, a_n$ of the maximal torus of the framing $GL(\textbf{w}_{n-1})$.

Then we make a key observation that the vertex functions for the $A_n$ quiver, once properly normalized, at specific values of the equivariant parameters \eqref{eq:aequivspec} truncate to symmetric Macdonald polynomials of quantum parameters of $X_n$. We establish this fact using difference equations which both vertex functions and Macdonald polynomials satisfy.
Therefore we can make a one-to-one correspondence between K-theory vertex functions and Young tableaux which label the Macdonald polynomials. These polynomials also describe fixed points of the maximal torus of the equivariant K-theory of Hilbert schemes of points, which is reviewed in \Secref{Sec:HIlb}. 

The symmetric Macdonald polynomials appear in modules of spherical double affine Hecke algebra (DAHA) for $\mathfrak{gl}_n$. We prove that projective $n\to\infty$ limit of these modules give rise to Fock modules of the elliptic Hall algebra, whose evaluation parameters are related to equivariant parameters of $X_n$. Further in \Secref{Sec:HIlb} we prove main Theorem \ref{Th:EmbeddingTh} about embedding of the K-theory of Hilbert scheme of points into the K-theory of the moduli space of genus zero quasimaps to $X_n$. We summarize the correspondence between K-theories of both spaces in Table \ref{Tab:Corresp}. In order to accommodate Hilbert schemes for an arbitrary large number of points we need to send the rank of the A-type quiver $n$ to infinity. 

\Secref{Sec:RankNSheaves} is devoted to a generalization of the construction which we developed in \Secref{Sec:HIlb} to the moduli spaces of torsion-free sheaves of rank $N$. Theorem \ref{Th:EmbeddingSheavesN} directly generalizes  Theorem \ref{Th:EmbeddingTh}. We demonstrate how the tensor product of Fock modules reduces to a MacMahon module once its evaluation parameters are specified to the resonance locus \eqref{eq:LocusNMacMaheval} and how to obtain these modules from K-theory of $X_\infty$. 

Finally, in \Secref{Sec:QuantumMult} we put forward a conjecture which relates operators of quantum multiplication in K-theory of the moduli spaces of framed parabolic sheaves on $\mathbb{C}^2$ with the spectrum of the elliptic Ruijsenaars-Schneider (eRS) model. Under this correspondence, which generalizes an analogous statement in quantum cohomology \cite{Negut:2011aa}, the elliptic deformation parameter coincides with the quantum parameter in the equivariant K-theory of the moduli space. We use the corresponding ADHM quiver to describe  the spectrum of the multiplication operator (see Conjecture \ref{eq:QuantMultConj}).

\subsection*{Acknowledgements}
I would like to thank I. Cherednik, E. Gorsky, S. Gukov, S. Katz, A. Negut, A. Oblomkov, A. Okounkov, P. Pushkar, A. Smirnov, Y. Soibelman, A. Zeitlin for valuable discussions and suggestions. I acknowledge support of IH\'ES and funding from the European Research Council (ERC) under the European Union's Horizon 2020 research and innovation program (QUASIFT grant agreement 677368). I would also like to thank Simons Center for Geometry and Physics in Stony Brook, Mathematical Sciences Research Institute in Berkeley, Yau Mathematical Center at Tsinghua Univeristy, Beijing \cite{talkSanya}, where part of this work was done. This research was supported in part by the National Science Foundation under Grant No. NSF PHY-1748958 and in part by AMS-Simons grant.

\section{$A$-type Quiver Varieties and Macdonald polynomials}\label{Sec:qKZtRS}
Let us first review the construction of quantum equivariant K-theory of Nakajima quiver varieties. Details can be found in \cite{Nakajima:2001qg,Negut:thesis}. Then we shall discuss in detail the vertex functions specified for quivers of type A. 

\subsection{Quiver Varieties}
Let us start with a framed quiver with set of vertices denoted by $I$. A representation of such quiver is given by a set of vector spaces $V_i$ corresponding to internal vertices and $W_i$ for the framing vertices, together with a set of morphisms between these vertices. Let $\text{Rep}(\mathbf{v},\mathbf{w})$ be the linear space of quiver representation with dimension vectors $\mathbf{v}$ and $\mathbf{w}$, where $\mathbf{v}_i=\text{dim}\ V_i$, $\mathbf{w}_i=\text{dim}\ W_i$. Let $\mu :T^*\text{Rep}(\mathbf{v},\mathbf{w})\to \text{Lie}{(G)}^{*}$, $G=\prod_i GL(V_i)$, be the moment map and let $L(\mathbf{v},\mathbf{w})=\mu^{-1}(0)$ be the zero locus of the moment map. \figref{fig:QuiverVarieryNak} provides an example of two framed quiver varieties.

The Nakajima variety $X$ corresponding to a quiver is an algebraic symplectic reduction
\begin{equation}
X=L(\mathbf{v},\mathbf{w})/\!\! /_{\theta}G=L(\mathbf{v},\mathbf{w})_{ss}/G,
\end{equation}
depending on a certain choice of stability parameter $\theta\in {\mathbb{Z}}^I$.
\cite{Ginzburg:}. Group
\begin{equation}
G=\prod GL(Q_{ij})\times\prod GL({W}_i)\times \mathbb{C}^{\times}_\hbar
\end{equation}
acts as automorphisms of $X$, coming form its action on the space of representations $\text{Rep}(\mathbf{v},\mathbf{w})$ of the quiver. Here $Q_{ij}$ is the incidence matrix of the quiver, $\mathbb{C}_{\hbar}$ scales cotangent directions with character $\hbar$ and their symplectic form with $\hbar^{-1}$. We denote by $T=\mathbb{T}(G)$ the maximal torus of $G$.

For given $X$ one can define a set of tautological bundles on it $V_i, W_i, i\in I$ as bundles constructed by assigning to each point the corresponding vector spaces $V$ and $W$. Bundles $W_i$ are topologically trivial. Tensorial polynomials of these bundles and their duals generate K-theory ring $K_{\mathsf{T}}(X)$ according to Kirwan surjectivity conjecture, which is recently shown to be true on the level of cohomology \cite{McGerty:2016kir}.

\subsection{Quasimaps to Nakajima Quiver Varieties}
First let us recall the basics of the quasimap theory to quiver varieties (see \cite{Pushkar:2016}, Sec. 2.2 or Definition 7.2.1 of \cite{Ciocan-Fontanine:2011tg}). Let us fix a rational curve $\mathcal{D}\simeq\mathbb{P}^1$ and a set of distinct points $p_1,\dots, p_m\in\mathcal{D}$.

\begin{definition}
A  stable genus zero quasimap from curve $\mathcal{C}$ to $X$ relative to points $p_1,\dots, p_m$ is given by the following data
$$
(f,\mathcal{C},P,\pi,p'_1,\dots, p'_m)
$$
where
\begin{itemize}
\item $\mathcal{C}$ is a genus zero connected curve with at worst nodal singularities and $p'_1,\dots, p'_m$ are nonsingular points of $\mathcal{C}$.
\item P is a principal $G$-bundle over $\mathcal{C}$
\item $f$ is a section of 
$
P\times_G T^* \text{Rep}(\boldsymbol{v},\boldsymbol{w})
$
satisfying $\mu =0$, where
\item $\pi: \mathcal{C}\to\mathcal{D}$ is a regular map satisfying the following conditions:
\end{itemize}
\begin{enumerate}
\item There is a distinguished component $\mathcal{C}_0$ of $\mathcal{C}$ so that $\pi$ restricts to an isomorphism: $\pi: \mathcal{C}_0\simeq \mathcal{D}$ and $\pi(\mathcal{C}\backslash \mathcal{C}_0)$ is zero-dimensional
\item $\pi(p'_i)=p_i$
\item $f(p)$ is stable for all but a finite set of points disjoint from $p'_1,\dots, p'_m$ and the nodes of $\mathcal{C}$.
\item The line bundle $\omega_{\widetilde{\mathcal{C}}}(\sum_i p_i+\sum_k q_k)\otimes\mathcal{L}^\epsilon_\theta$ is ample for every rational positive $\epsilon$ where $\mathcal{L}^\epsilon_\theta=P\times_G \mathbb{C}_\theta$, $\widetilde{\mathcal{C}}$ is the closure of $\mathcal{C}\backslash \mathcal{C}_0$, $q_k$ are the nodal points of $\widetilde{\mathcal{C}}$ and $\mathbb{C}_\theta$ is the one dimensional $G$-module defined by the stability condition.
\end{enumerate}
\end{definition}

Conditions (1)-(3) above imply that curve $\mathcal{C}$ is a union of the distinguished component $\mathcal{C}_0$ and chains of rational curves attached to it at pints $p_1,\dots,p_m$ via a nodal singularity. Projection $\pi$ collapses the chain of $\mathbb{P}^1$s attached to each point $p_i$ to that point itself. Away from those points $\pi$ is an isomorphism. Condition (4) above states that the total number of special points on each component of $\mathcal{C}$ including the nodes is at least two. This means that point $p'i$ is located on the last component of its chain.

\begin{definition}
A relative quasimap $
(f,\mathcal{C},P,\pi,p'_1,\dots, p'_m)
$ is nonsingular at $p\in\mathcal{C}$ if $f(p)$ is stable. In this case $f(p)$ is a point in the quiver variety.
\end{definition}

\begin{definition}
The degree of a quasimap $
(f,\mathcal{C},P,\pi,p'_1,\dots, p'_m)
$ is $\boldsymbol{d}=(d_i)_{i\in\mathbb{Z}}$ where $d_i$ are degrees of rank $\boldsymbol{v}_i$ vector bundles $P\times_G \mathscr{V}_i\to\mathcal{C}$.
\end{definition}

\begin{theorem}[\cite{Ciocan-Fontanine:2011tg}]
The stack $\textsf{QM}_{{\rm relative}\, p_1,\dots,p_m}^{\ard}$ parameterizing the data of stable genus zero quasimaps to $X$ is a Deligne-Mumford stack of finite type with a perfect obstruction theory.
\end{theorem}

\begin{definition}
Let $\textsf{QM}_{{\rm nonsing}\, p_1,\dots,p_m}^{\ard}$ be the stack parameterizing the data of degree $\boldsymbol{d}$ quasimap to $X$ relative to $p_1,\dots, p_m$ such that $\mathcal{C}\simeq\mathcal{D}$. 
\end{definition}

Restricting the obstruction theory of $\textsf{QM}_{{\rm relative}\, p_1,\dots,p_m}^{\ard}$ gives a perfect obstruction theory on $\textsf{QM}_{{\rm nonsing}\, p_1,\dots,p_m}^{\ard}$.

We can define an evaluation map to the quotient stack
$$
{\rm ev}_p (f,\mathcal{C},P,\pi,p'_1,\dots, p'_m) = f(p)\in[\mu^{-1}(0)/G]\,.
$$
Given a Shur functor $\tau$ in tautological bundles on $X$ we can consider the associated K-theory class on $[\mu^{-1}(0)/G]$. This allows us to define an induced K-theory class $\tau_{{\rm stack}}$ on $\textsf{QM}_{{\rm relative}\, p_1,\dots,p_m}^{\ard}$ as
$
\tau|_p = {\rm ev}_p ^*(\tau_{{\rm stack}})\,.
$

\subsection{The Quantum K-theory}
The quasimap moduli spaces have a natural action of maximal torus, lifting its action from $X$. When there are at most two special points and the base curve $\mathcal{C}$ is ${\mathbb{P}}^1$ we extend $T$ by additional torus $\mathbb{C}^{\times}_q$, which scales ${\mathbb{P}}^1$ such that the tangent space $T_{0} {\mathbb{P}}^1$ has character denoted by $q$. We shall include this action in the full torus by $\mathsf{T}=T\times \mathbb{C}^{\times}_q$. We assume that the two fixed points of $\mathbb{C}^{\times}_q$ are $p_1=0$ and $p_2=\infty$.
As explained in \cite{Okounkov:2015aa} one can construct various enumerative invariants of $X$ using virtual structure sheaves $\mathscr{O}_{\text{vir}}$ for $\textsf{QM}^{\ard}$. Using the above two marked points one can define a vertex function.
\begin{definition}
The element
\begin{equation}
V^{(\tau)}(z)=\sum\limits_{\ard=\vec{0}}^{\infty} z^{\ard} {{\rm ev}}_{p_2, *}\Big(\textsf{QM}^{\ard}_{{{\rm nonsing}} \, p_2},\widehat{{\mathscr{O}}}_{{{\rm vir}}} \tau (\left.\mathscr{V}_i\right|_{p_1}) \Big) \in  K_{\mathsf{T}}(X)_{loc}[[z]]
\label{eq:vertexQKgen}
\end{equation}
is called bare vertex with descendent $\tau\in K_T(X)$.
\end{definition}
The vertex function is an element of localized quantum K-theory of $X$. For the rest of this section we shall focus on $A$-type quiver varieties. 

\subsection{Vertex Functions for $T^*\mathbb{F}l_n$}
The cotangent bundle to the complete flag variety $X_n$ is given by A-type quiver from \figref{fig:QuiverVarieryNak} with $\mathbf{v}_i=i,\, i =1,\dots, n-1$ and $\mathbf{w}_{n-1}=n$. The details of the calculation can be found in \cite{Koroteev:2017}, here we merely present the answer.

To describe the expression for the vertex one needs to take into account  the fixed points of $\textsf{QM}^{\bf d}_{{\rm nonsing} \, p_2}$. Each such point is described by the data $(\{\mathscr{V}_i\},\{\mathscr{W}_{n-1}\})$, where ${\rm deg}\mathscr{V}_i=i, {\rm deg}\mathscr{W}_{n-1}=0$. Each bundle $\mathscr{V}_i$ can be decomposed into a sum of line bundles $\mathscr{V}_i=\mathcal{O} (d_{i,1}) \oplus \ldots \oplus \mathcal{O}(d_{i,i})$ (here $i=d_{i,1}+\ldots +d_{i,i}$). For a stable quasimap with such data to exist the collection of $d_{i,j}\geq 0$ must satisfy the following condition that
for each $i=1,\ldots ,n-2$ there should exist a subset in $\{d_{i+1,1},\ldots d_{i+1,i+1}\}$ of cardinality $i$ $\{d_{i+1,j_{1}},\ldots d_{i+1,j_i}\}$, such that $d_{i,k}\geq d_{i+1, j_k}$.
In the following we will denote the chamber containing such collections $\{d_{i,j}\}$ as $\mathrm{C}$.

\begin{theorem}[\cite{Koroteev:2017}]\label{Prop:2017paper}
Let $\textbf{p}=\mathbf{V}_{1}\subset \ldots \subset \mathbf{V}_{n-1}\subset \{a_1,\cdots,a_{n}\}$ be a chain of subsets defining a torus fixed point $\textbf{p}\in X_n^{\mathsf{T}}$.
Then the coefficient of the vertex function with descendent $\tau\in K_T(X_n)$ for this point is given by:
\begin{equation}
V^{(\tau)}_{\textbf{p}}(z) = \sum\limits_{d_{i,j}\in C}\, \left(z^\sharp\right)^{\textbf{d}} \, EHG\ \ \tau(x_{i,j} q^{-d_{i,j}}),
\label{eq:Vtauz}
\end{equation}
where $\textbf{d}=(d_1,\ldots ,d_{n-1}),d_i=\sum_{j=1}^{i}d_{i,j}$,
\beq
E=\prod_{i=1}^{n-1}\prod\limits_{j,k=1}^{i}\{x_{i,j}/x_{i,k}\}^{-1}_{d_{i,j}-d_{i,k}},\quad 
G=\prod\limits_{j=1}^{n-1} \prod\limits_{k=1}^{n} \{x_{n-1,j}/a_k\}_{d_{n-1,j}},\nonumber
\eeq
$$
H=\prod_{i=1}^{n-2}\prod_{j=1}^{i}\prod_{k=1}^{i+1}\{x_{i,j}/x_{i+1,k}\}_{d_{i,j}-d_{i+1,k}}.
$$
Here
\beq
\{x\}_{d}=\dfrac{(\hbar/x,q)_{d}}{(q/x,q)_{d}} \, (-q^{1/2} \hbar^{-1/2})^d, \ \ \textrm{where}  \ \ (x,q)_{d}=\prod^{d-1}_{i=0}(1-q^ix).\nonumber
\eeq
\end{theorem}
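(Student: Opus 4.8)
The plan is to evaluate the pushforward in \eqref{eq:vertexQKgen} by equivariant virtual localization on $\textsf{QM}^{\ard}_{\mathrm{nonsing}\, p_2}$ with respect to the torus $\mathsf{T}_q$, following the method of \cite{Okounkov:2015aa} and reproducing the explicit A-type computation of \cite{Koroteev:2017}. First I would identify the $\mathsf{T}_q$-fixed loci. A quasimap fixed by the full torus must split each bundle $\mathscr{V}_i$ equivariantly into line bundles $\mathscr{V}_i=\mathcal{O}(d_{i,1})\oplus\cdots\oplus\mathcal{O}(d_{i,i})$ on $\mathbb{P}^1$; the fibers at the two $\mathbb{C}^\times_q$-fixed points $p_1,p_2$ then differ by the character $q^{\pm d_{i,j}}$. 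Fixing the torus fixed point $\textbf{p}$ amounts to fixing the Chern roots $x_{i,j}$ of $\mathscr{V}_i|_{p_2}$ (a subset of $\{a_1,\dots,a_n\}$ specified by the chain $\mathbf{V}_1\subset\cdots\subset\mathbf{V}_{n-1}$), so that $\mathscr{V}_i|_{p_1}$ carries roots $x_{i,j}q^{-d_{i,j}}$. Requiring the section to be invariant and to satisfy the moment map and stability constraints produces exactly the matching inequalities $d_{i,k}\geq d_{i+1,j_k}$ defining the chamber $\mathrm{C}$, which come from the nonvanishing of the edge maps $\mathscr{V}_i\to\mathscr{V}_{i+1}$ at the generic point.

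Next I would compute each fixed-point contribution from the deformation--obstruction theory, governed by the complex $H^\bullet(\mathbb{P}^1,\mathscr{M}\oplus\mathscr{M}^\ast\otimes\hbar)$ with the gauge directions $\bigoplus_i H^\bullet(\mathbb{P}^1,\mathrm{End}(\mathscr{V}_i))$ (and the moment map) removed, where $\mathscr{M}$ is the sum of framing and edge homomorphisms from the definition. On a fixed locus this complex decomposes into one-dimensional $\mathbb{C}^\times_q$-weight spaces of the form $H^\bullet(\mathbb{P}^1,\mathcal{O}(m))$, and the symmetrized virtual structure sheaf $\widehat{\mathscr{O}}_{\mathrm{vir}}$, built from a square root of the virtual canonical bundle, assigns to each the appropriate ratio of its character and its $\hbar$-twisted dual. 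Grouping the weight spaces by the three types of summands of $\mathscr{M}$ together with the gauge term yields the three products: the endomorphisms $\mathrm{End}(\mathscr{V}_i)$ give $E$ (with inverse powers, since they sit in the gauge/denominator part), the edge maps $\mathrm{Hom}(\mathscr{V}_i,\mathscr{V}_{i+1})$ give $H$, and the framing $\mathrm{Hom}(\mathscr{W}_{n-1},\mathscr{V}_{n-1})$ gives $G$.

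The elementary block of each product is the contribution of a single pair of line bundles of degree difference $d$: taking the $\mathbb{C}^\times_q$-character of $H^\bullet(\mathbb{P}^1,\mathcal{O}(d))$ and its $\hbar$-twisted dual and applying the square-root symmetrization collapses, by telescoping the $q$-shifted factors, to the single symbol $\{x\}_d=(\hbar/x,q)_d/(q/x,q)_d\,(-q^{1/2}\hbar^{-1/2})^d$, which is where its stated normalization is pinned down. Since the descendent $\tau$ is read off at $p_1$, the map $\mathrm{ev}_{p_2,\ast}$ evaluates it on the Chern roots of $\mathscr{V}_i|_{p_1}$, namely $x_{i,j}q^{-d_{i,j}}$, producing $\tau(x_{i,j}q^{-d_{i,j}})$; the $z$-grading records the degree through $(z^\sharp)^{\textbf{d}}$, the shift $\sharp$ absorbing the polarization-dependent half-integer twist needed to make $\widehat{\mathscr{O}}_{\mathrm{vir}}$ well defined. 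Summing over all $\{d_{i,j}\}\in\mathrm{C}$ assembles \eqref{eq:Vtauz}.

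The main obstacle is bookkeeping rather than conceptual. One must check that the chosen square root of the tautological bundles and the polarization are compatible, so that every half-integer power of $q$ and $\hbar$ cancels into the closed form $\{x\}_d$, producing in particular the correct prefactor $(-q^{1/2}\hbar^{-1/2})^d$ and the correct placement of $E$ in the denominator --- that is, correctly distinguishing the moving tangent directions from the obstruction directions in each weight space. One must also verify that the inequalities extracted from stability cut out precisely the chamber $\mathrm{C}$ and not a larger or smaller region. Both points are carried out in detail in \cite{Koroteev:2017}, whose computation I would follow.
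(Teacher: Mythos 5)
Your proposal is correct and follows essentially the same route as the source: the paper itself states this theorem without proof, citing \cite{Koroteev:2017}, where it is established precisely by the equivariant localization on $\textsf{QM}^{\ard}_{\mathrm{nonsing}\,p_2}$ that you describe --- splitting each $\mathscr{V}_i$ into line bundles indexed by $d_{i,j}$, extracting the chamber $\mathrm{C}$ from stability, and reading off $E$, $H$, $G$ from the gauge, edge, and framing pieces of the symmetrized virtual structure sheaf. Your identification of the descendent insertion at $p_1$ with Chern roots $x_{i,j}q^{-d_{i,j}}$ and of the $(-q^{1/2}\hbar^{-1/2})^d$ normalization as coming from the square-root/polarization choice matches that computation.
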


The coefficients of the vertex functions are (generalizations of) q-hypergeometric functions. For our purposes it will be more convenient to use the following parameterization for the quantum parameters in $K_T(X_n)$\footnote{$\zeta_1$ is also redefined to absorb factor $(-q^{1/2}\hbar^{-1/2})^{d_1}$, which arises from $E$ function above.}
\begin{align}
z^\sharp_1&=\frac{\zeta_{1}}{\zeta_2}\,,\cr
z_i^\sharp&=\frac{\zeta_{i}}{\zeta_{i+1}}\,,\quad i=2,\dots, n-2\cr
z_{n-1}^\sharp&=\frac{\zeta_{n-1}}{\zeta_{n}}\,.
\end{align}

\begin{corollary}
For $X$ as above the vertex function coefficient for the identity class $\tau=1$ reads
\begin{equation}
V^{(1)}_{\textbf{p}}(z) = \sum\limits_{d_{i,j}\in C} \prod_{i=1}^{n-1} \left(t\frac{\zeta_{i}}{\zeta_{i+1}}\right)^{d_i} \prod\limits_{j,k=1}^{i}\frac{\left(q\frac{x_{i,j}}{x_{i,k}},q\right)_{d_{i,j}-d_{i,k}}}{\left(\hbar\frac{x_{i,j}}{x_{i,k}},q\right)_{d_{i,j}-d_{i,k}}}\cdot\prod_{j=1}^{i}\prod_{k=1}^{i+1}\frac{\left(\hbar\frac{x_{i+1,k}}{x_{i,j}},q\right)_{d_{i,j}-d_{i+1,k}}}{\left(q\frac{x_{i+1,k}}{x_{i,j}},q\right)_{d_{i,j}-d_{i+1,k}}}\,,
\label{eq:V1pdef}
\end{equation}
where we define $x_{n,k}=a_k$ and $t=\frac{q}{\hbar}$.
\end{corollary}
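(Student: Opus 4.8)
The plan is to derive \eqref{eq:V1pdef} as a direct specialization of Theorem \ref{Prop:2017paper} to $\tau=1$, followed by expanding every bracket through its definition and collecting the resulting monomials. First I set $\tau=1$, so the descendent factor $\tau(x_{i,j}q^{-d_{i,j}})$ becomes $1$ and the summand reduces to $(z^\sharp)^{\mathbf d}\,EHG$. I then substitute $\{x\}_d=(\hbar/x,q)_d/(q/x,q)_d\cdot(-q^{1/2}\hbar^{-1/2})^d$ into each of $E$, $H$ and $G$, writing every factor as a ratio of $q$-Pochhammer symbols times a monomial in $q^{1/2}\hbar^{-1/2}$. The whole content of the Corollary is then to reorganize these pieces and to check that the accumulated monomials assemble into the prefactor $\prod_i(t\,\zeta_i/\zeta_{i+1})^{d_i}$ after the reparameterization $z_i^\sharp=\zeta_i/\zeta_{i+1}$.

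The next step is to absorb $G$ into the $H$-type product. Setting $x_{n,k}=a_k$ and $d_{n,k}=0$, the factor $\{x_{n-1,j}/a_k\}_{d_{n-1,j}}$ is exactly the top boundary case $\{x_{n-1,j}/x_{n,k}\}_{d_{n-1,j}-d_{n,k}}$, so $H$ and $G$ merge into the single product $\prod_{i=1}^{n-1}\prod_{j=1}^{i}\prod_{k=1}^{i+1}\{x_{i,j}/x_{i+1,k}\}_{d_{i,j}-d_{i+1,k}}$. Its Pochhammer part reproduces the second product in \eqref{eq:V1pdef} verbatim, since $\{x_{i,j}/x_{i+1,k}\}_d$ yields precisely $(\hbar x_{i+1,k}/x_{i,j},q)_d/(q x_{i+1,k}/x_{i,j},q)_d$. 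For the $E$ factor the situation is slightly more delicate: $\{x_{i,j}/x_{i,k}\}^{-1}$ produces Pochhammer symbols with the \emph{reciprocal} argument $x_{i,k}/x_{i,j}$, whereas the first product in \eqref{eq:V1pdef} is written with argument $x_{i,j}/x_{i,k}$. To reconcile the two I relabel $j\leftrightarrow k$ in the full double product and apply the reflection identity $(a;q)_m=(-a)^m q^{\binom{m}{2}}(a^{-1}q^{1-m};q)_m$, which trades an argument for its reciprocal at the price of an explicit monomial in $q,\hbar$ and the $x_{i,j}$.

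The heart of the argument is then the monomial bookkeeping. The explicit prefactors of the merged $H$–$G$ product contribute $(-q^{1/2}\hbar^{-1/2})$ raised to $\sum_{i=1}^{n-1}\sum_{j=1}^{i}\sum_{k=1}^{i+1}(d_{i,j}-d_{i+1,k})$, which telescopes to $2\sum_{i=1}^{n-1}d_i$; since $(-q^{1/2}\hbar^{-1/2})^2=q/\hbar=t$, this already supplies the factor $\prod_i t^{d_i}$ inside each $(t\,\zeta_i/\zeta_{i+1})^{d_i}$. The prefactors intrinsic to $E$ cancel level by level because $\sum_{j,k}(d_{i,j}-d_{i,k})=0$, so that all that can survive from the $E$ factor is the residue of the reflection monomials; this residue is the single power $(-q^{1/2}\hbar^{-1/2})^{d_1}$ of the footnote, which I absorb into the redefinition of $\zeta_1$. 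Finally, inserting $z_i^\sharp=\zeta_i/\zeta_{i+1}$ gives $(z^\sharp)^{\mathbf d}=\prod_i(\zeta_i/\zeta_{i+1})^{d_i}$, and combining this with the collected $t^{d_i}$ yields $\prod_i(t\,\zeta_i/\zeta_{i+1})^{d_i}$, matching \eqref{eq:V1pdef}. I expect the main obstacle to be exactly this accounting: controlling the half-integer powers of $q$ and $\hbar$, the signs, and the $q^{\binom{m}{2}}$ and $x$-dependent factors generated by the reflection of $E$, and verifying that after the antisymmetry $d_{i,j}-d_{i,k}=-(d_{i,k}-d_{i,j})$ is used these conspire to leave behind nothing but $\prod_i t^{d_i}$ together with the solitary $\zeta_1$ redefinition.
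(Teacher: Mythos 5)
The paper offers no written proof of this corollary---it is presented as a direct specialization of Theorem \ref{Prop:2017paper}---so your substitute-and-bookkeep strategy is the right (and essentially the only) one, and most of it checks out. Setting $\tau=1$, merging $G$ into the $H$-type product via $x_{n,k}=a_k$ and $d_{n,k}=0$, reading off $(\hbar x_{i+1,k}/x_{i,j};q)_{d}/(q x_{i+1,k}/x_{i,j};q)_{d}$ from $\{x_{i,j}/x_{i+1,k}\}_{d}$, the telescoping $\sum_{i=1}^{n-1}\sum_{j=1}^{i}\sum_{k=1}^{i+1}(d_{i,j}-d_{i+1,k})=\sum_{i}\bigl((i+1)d_i-i\,d_{i+1}\bigr)=2\sum_i d_i$ so that the accumulated monomial is $\prod_i t^{d_i}$, and the vanishing of $\sum_{j,k=1}^{i}(d_{i,j}-d_{i,k})$ are all correct; this is the substantive content of the corollary and you have it.

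The gap sits exactly at the step you flag as delicate and then defer. Expanding $E$ literally gives $\prod_{j,k}(q\,x_{i,k}/x_{i,j};q)_{d_{i,j}-d_{i,k}}\big/(\hbar\,x_{i,k}/x_{i,j};q)_{d_{i,j}-d_{i,k}}$, and you propose to turn this into the first product of \eqref{eq:V1pdef} by relabelling $j\leftrightarrow k$ and applying the reflection identity, asserting that only a monomial survives. That assertion fails: take $i=2$, set $u=x_{i,1}/x_{i,2}$ and $m=d_{i,1}-d_{i,2}=1$; the $E$-side product evaluates to $(u-q)(q-\hbar u)\big/\bigl(q(u-\hbar)(1-u)\bigr)$ while the corollary's product evaluates to $(1-qu)(qu-\hbar)\big/\bigl(q(1-\hbar u)(u-1)\bigr)$, and their ratio is a non-constant rational function of $u$ (it vanishes at $u=q$ and has a pole at $u=\hbar$), so no accounting of signs, half-integer powers or $q^{\binom{m}{2}}$ factors can reconcile the two. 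The honest resolution is that the first product in \eqref{eq:V1pdef} must be read as the verbatim Pochhammer content of $E$---the inversion $\{\cdot\}^{-1}$ is already encoded in $q$ appearing upstairs and $\hbar$ downstairs---with the orientation of the argument $x_{i,j}/x_{i,k}$ versus $x_{i,k}/x_{i,j}$ being an internal notational inconsistency of the paper (compare the $T^*\mathbb{P}^1$ example \eqref{eq:TP1V}, which uses yet another orientation); attempting to \emph{prove} the two orientations equivalent by reflection, as you do, cannot succeed. Relatedly, your attribution of the absorbed factor $(-q^{1/2}\hbar^{-1/2})^{d_1}$ to a ``reflection residue'' is unsupported, precisely because your own correct observation that $\sum_{j,k}(d_{i,j}-d_{i,k})=0$ eliminates every explicit monomial that $E$ could contribute.
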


Vertex functions $V^{(\tau)}$ can be regarded as classes in equivariant K-theory of the moduli space of quasimaps which we shall denote
\begin{equation}
\mathcal{H}_n:=K_{\textsf{T}}(\textbf{QM}(\mathbb{P}^1,X_n))
\label{eq:Kthquasimaps}
\end{equation}
for extended maximal torus $\textsf{T}$.

Now we shall demonstrate that coefficient functions of K-theory vertices obey Macdonald difference equations.

\subsection{Macdonald Difference Operators}
It was also proven in \cite{Koroteev:2018} that K-theoretic vertex functions of cotangent bundles to flag varieties satisfy tRS difference equations in equivariant parameters. In this paper we shall need the `mirror' (or symplectic dual) version of that theorem which states that properly normalized vertex functions also obey tRS difference equations in quantum parameters $\zeta_1,\dots,\zeta_n$. First let us introduce the difference operators.

\begin{definition}
The difference operators of trigonometric Ruijsenaars-Schneider model are given by
\begin{equation}
T_r(\boldsymbol{\zeta})=\sum_{\substack{\mathcal{I}\subset\{1,\dots,n\} \\ |\mathcal{I}|=r}}\prod_{\substack{i\in\mathcal{I} \\ j\notin\mathcal{I}}}\frac{\hbar\,\zeta_i-\zeta_j}{\zeta_i-\zeta_j}\prod\limits_{i\in\mathcal{I}}p_k \,,
\label{eq:tRSRelationsEl}
\end{equation}
where $\boldsymbol{\zeta}=\{\zeta_1,\dots, \zeta_n\}$, the shift operator $p_k f(\zeta_k)=f(q\zeta_k)$.
\end{definition}

Now we shall prove that the K-theory vertex function after normalization is the eigenfunction of the tRS difference operators.
\begin{theorem}\label{Th:tRSeqproof}
Let $V^{(1)}_{\textbf{p}}$ be the coefficient for the vertex function for $X$ given in \eqref{eq:V1pdef}. Define 
\begin{equation}
\mathsf{V}^{(1)}_{\textbf{p}}= \prod\limits_{i=1}^n\frac{\theta(\hbar^{i-n}\zeta_i,q)}{\theta(a_i\zeta_i,q)}\cdot V^{(1)}_{\textbf{p}}\,,
\label{eq:DefVvertMac}
\end{equation}
where $\theta(x,q)=(x,q)_\infty (qx^{-1},q)_\infty$ is basic theta-function.
Then $\mathsf{V}_{\textbf{p}}$ are eigenfunctions for tRS difference operators \eqref{eq:tRSRelationsEl} for all fixed points $\textbf{p}$
\begin{equation}
T_r(\boldsymbol{\zeta}) \mathsf{V}^{(1)}_{\textbf{p}} = e_r (\mathbf{a}) \mathsf{V}^{(1)}_{\textbf{p}}\,, \qquad r=1,\dots, n\,,
\label{eq:tRSEigenz}
\end{equation}
where $e_r$ is elementary symmetric polynomial of degree $r$ of $a_1,\dots, a_n$\,.
\end{theorem}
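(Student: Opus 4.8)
The plan is to prove \eqref{eq:tRSEigenz} by acting with the operators \eqref{eq:tRSRelationsEl} directly on the explicit $q$-hypergeometric coefficient \eqref{eq:V1pdef}, after first recognizing $T_r(\vec{\zeta})$ as the $r$-th Macdonald $q$-difference operator in the K\"ahler variables $\zeta_1,\dots,\zeta_n$, with Macdonald parameter $\hbar$ and base $q$. The whole purpose of the theta-function dressing in \eqref{eq:DefVvertMac} is to turn the multiplicative shifts $p_i:\zeta_i\mapsto q\zeta_i$ into the equivariant parameters $a_i$: using the quasi-periodicity $\theta(qx,q)=-x^{-1}\theta(x,q)$ one checks that the prefactor $\Phi:=\prod_i\theta(\hbar^{i-n}\zeta_i,q)/\theta(a_i\zeta_i,q)$ satisfies $p_i\Phi=\hbar^{\,n-i}a_i\,\Phi$. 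Hence conjugation gives
\[
\Phi^{-1}\,T_r(\vec{\zeta})\,\Phi=\sum_{\substack{\mathcal{I}\subset\{1,\dots,n\}\\ |\mathcal{I}|=r}}\Big(\prod_{i\in\mathcal{I}}\hbar^{\,n-i}a_i\Big)\prod_{\substack{i\in\mathcal{I}\\ j\notin\mathcal{I}}}\frac{\hbar\zeta_i-\zeta_j}{\zeta_i-\zeta_j}\,\prod_{i\in\mathcal{I}}p_i\,,
\]
so the theorem is equivalent to the single statement that the bare coefficient $V^{(1)}_{\textbf{p}}$ is an eigenfunction of this twisted Macdonald operator with eigenvalue $e_r(\mathbf{a})$.

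Next I would make the action on the series completely explicit. The variables $\zeta$ enter \eqref{eq:V1pdef} only through the monomial $\prod_{i}(t\,\zeta_i/\zeta_{i+1})^{d_i}$, so $\prod_{i\in\mathcal{I}}p_i$ multiplies the degree-$\mathbf{d}$ summand by $q^{\sum_{k\in\mathcal{I}}(d_k-d_{k-1})}$ (with the conventions $d_0=d_n=0$), while leaving the $q$-Pochhammer part $c_{\mathbf{d}}$ untouched. The problem therefore reduces to an eigenvalue equation for the full $\mathbf{d}$-sum; because the rational tRS coefficients $\tfrac{\hbar\zeta_i-\zeta_j}{\zeta_i-\zeta_j}$ mix different monomials, this is a genuine difference-operator identity rather than a term-by-term match. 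The natural way to organize the verification is through the nested Gelfand--Tsetlin structure of \eqref{eq:V1pdef}: peel off the top level $i=n-1$, whose Chern roots specialize at the fixed point $\textbf{p}$ to the $a_k$ selected by the flag $\mathbf{V}_1\subset\cdots\subset\mathbf{V}_{n-1}$, and induct on $n$.

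The heart of the argument, and the step I expect to be the main obstacle, is showing that $\Phi^{-1}T_r\Phi$ acting on $V^{(1)}_{\textbf{p}}$ returns a $\zeta$-\emph{independent} multiple of $V^{(1)}_{\textbf{p}}$. I would establish this by the standard interpolation method for Macdonald operators: the ratio $(\Phi^{-1}T_r\Phi\,V^{(1)}_{\textbf{p}})/V^{(1)}_{\textbf{p}}$ is a symmetric rational function of the $\zeta_i$ whose only candidate poles, at $\zeta_i=\zeta_j$, have residues that cancel in pairs precisely because the coefficients $\tfrac{\hbar\zeta_i-\zeta_j}{\zeta_i-\zeta_j}$ are compensated by the $q$-shifts of the Pochhammer ratios in $c_{\mathbf{d}}$; being pole-free and of bounded degree in each $\zeta_i$, it must be constant. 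The constant is then pinned down in the limit $\zeta_i/\zeta_{i+1}\to0$, in which only the $\mathbf{d}=\mathbf{0}$ term of $V^{(1)}_{\textbf{p}}$ survives and the subset sum collapses to an elementary-symmetric evaluation which, thanks to the $\hbar$-graded normalization built into \eqref{eq:DefVvertMac} (and the $\zeta_1$-redefinition recorded in the footnote to \eqref{eq:V1pdef}), equals $e_r(\mathbf{a})$. I expect the pole-cancellation identity to be the genuinely delicate computation, since it intertwines the tRS coefficients with the $q$-hypergeometric structure.

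Finally, I would note an alternative route that both corroborates the result and matches the paper's framing of \eqref{eq:tRSEigenz} as the symplectic-dual mirror of the tRS system in the \emph{equivariant} variables proven in \cite{Koroteev:2018}. The same normalized vertex $\mathsf{V}^{(1)}_{\textbf{p}}$ is a joint eigenfunction of tRS in $\mathbf{a}$, so exhibiting the explicit exchange symmetry $\mathbf{a}\leftrightarrow\vec{\zeta}$ of the Mellin--Barnes representation of $V^{(1)}_{\textbf{p}}$ upgrades that theorem to the present one. Either way, the answer is rigid: the space of power-series solutions to $T_r(\vec{\zeta})f=e_r(\mathbf{a})f$ that are regular in the chamber $C$ with the prescribed leading term $\Phi$ is one-dimensional, so once the vertex is shown to solve the system no spurious solutions can interfere, and the normalization \eqref{eq:DefVvertMac} fixes it uniquely as $\mathsf{V}^{(1)}_{\textbf{p}}$.
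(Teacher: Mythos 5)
Your preparatory steps are sound: the quasi-periodicity $\theta(qx,q)=-x^{-1}\theta(x,q)$ indeed gives $p_i\Phi=\hbar^{\,n-i}a_i\,\Phi$, and $\prod_{i\in\mathcal{I}}p_i$ does act on the degree-$\mathbf{d}$ summand of \eqref{eq:V1pdef} by $q^{\sum_{k\in\mathcal{I}}(d_k-d_{k-1})}$. The gap is precisely at the step you flag as delicate, and it is not a computation you postponed but an argument that does not apply. The interpolation method (``symmetric rational function, pole-free, bounded degree, hence constant'') is the standard proof that Macdonald operators act triangularly on \emph{symmetric polynomials}; but $V^{(1)}_{\textbf{p}}$ is neither symmetric in $\vec\zeta$ nor a rational function of bounded degree --- it is an infinite power series in the ratios $\zeta_i/\zeta_{i+1}$, attached to a single fixed point and convergent only in one chamber. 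The residue cancellation at $\zeta_i=\zeta_j$ is guaranteed only when the operator acts on a symmetric function, and even granting it you would be left with a power series, to which no Liouville-type rigidity applies. What actually must be proved is that the explicit Pochhammer coefficients satisfy the recursion in $\mathbf{d}$ imposed by $\Phi^{-1}T_r\Phi$ --- a genuine $q$-hypergeometric identity of Noumi--Shiraishi type --- and that identity is never supplied. Your closing uniqueness remark is correct but only shows that \emph{if} an eigenfunction with the prescribed leading term exists it is unique; it cannot replace the existence step. A secondary but real issue: with the prefactor exactly as in \eqref{eq:DefVvertMac}, the $\mathbf{d}=\mathbf{0}$ evaluation in the chamber $|\zeta_1|\ll\dots\ll|\zeta_n|$ gives $\hbar^{\,r(n-1)-\binom{r}{2}}e_r(\mathbf{a})$, not $e_r(\mathbf{a})$ (compare the $T^*\mathbb{P}^1$ example, where the paper uses $\theta(\hbar^{i-1}\zeta_i,q)$), so the normalization does not ``take care of itself'' as you assert.

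For comparison, the paper never touches the series directly: it acts with $T_r$ on the Mellin--Barnes representation \eqref{eq:GenerictRSSolution}, identifies the result as a vertex with descendant insertion $T_r(\mathbf{s})$ built from the tautological bundles, and then argues that this tRS class acts as the scalar $e_r(\mathbf{a})$ because the Bethe/saddle-point equations of $X_n$ are the classical relations $T_r=e_r(\mathbf{a})$, with the absence of subleading corrections in $\log q$ established by contour-shift identities (Appendix \ref{Sec:tRSP1} for $n=2$). Those contour deformations are exactly the mechanism that produces the $q$-difference identities your approach would need; your alternative route via the $\mathbf{a}\leftrightarrow\vec\zeta$ exchange is also not available for free, since the nested integral \eqref{eq:GenerictRSSolution} is not manifestly symmetric under that exchange --- exhibiting the symmetry is essentially equivalent to the theorem itself.
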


In order to understand the proof we shall use the integral formula for the vertex function. Using Theorem 4.8 from \cite{Koroteev:2018} we can write vertex \eqref{eq:V1pdef} as follows
\begin{equation}
V^{(1)}_{\textbf{p}} = \frac{e^{\frac{\log\zeta_n\cdot\log a_1\cdots a_n}{\log q}}}{2\pi i}\int\limits_{C_{\textbf{p}}} \prod_{m=1}^{n-1}\prod_{i=1}^{m} \frac{ds_{m,i}}{s_{m,i}} E(s_{m,i})\,\, e^{-\frac{\log \zeta_{m}/\zeta_{m+1} \cdot\log s_{m,i}}{\log q}} \cdot \prod_{j=1}^{m+1}H_{m,m+1}\left(s_{m,i},s_{m+1,j}\right)\,,
\label{eq:GenerictRSSolution}
 \end{equation}
where contour $C_{\textbf{p}}$ surrounds poles corresponding to the fixed point $\textbf{p}$ of the maximal torus of $X_n$
and the functions in the integrand are given by
\begin{equation}
H_{m,m+1}(\textbf{s}_{m},\textbf{s}_{m+1})=\prod\limits_{i=1}^{\textbf{v}_m}\prod\limits_{j=1}^{\textbf{v}_{m+1}}\dfrac{\varphi\left(\frac{q}{\hbar} \frac{s_{i,k}}{ s_{i+1,j} }\right)}{\varphi\left(\frac{s_{i,k}}{ s_{i+1,j} }\right)},
\label{eq:HyperGen}
\end{equation}
corresponding to the contribution of Hom$(\mathscr{V}_m,\mathscr{V}_{m+1})$ and 
\begin{equation}
E(\textbf{s}_n)=\prod\limits_{j,k=1}^{\mathbf{v}_n} \dfrac{\varphi\Big( \frac{s_{n,j}}{ s_{n,k}}\Big)}{\varphi\Big(t \frac{s_{n,j}}{ s_{n,k}}\Big)}\,,
\end{equation}
emerging from Hom$(\mathscr{V}_m,\mathscr{V}_m)$ in the localization computation, and where 
\begin{equation}
\varphi(x)=\prod^{\infty}_{i=0}(1-q^ix)\,.
\label{eq:PhiDef}
\end{equation}

\begin{proof}[Proof of Theorem \ref{Th:tRSeqproof}]
The verification of \eqref{eq:tRSEigenz} can be performed directly with the help of certain identities which arise from shifting of the integration contour in \eqref{eq:GenerictRSSolution}. An example of this calculation is given in \Appref{Sec:tRSP1} for $T^*\mathbb{P}^1$ and the calculation can be generalized to $T^*\mathbb{F}l_n$.

\vskip.1in
Here, however, we would like to give a complementary proof which uses previous results \cite{Koroteev:2017} and some elementary calculations. 

According to \cite{Koroteev:2017} tRS momenta $p_i$ correspond to multiplication by class $\widehat{\Lambda^i \mathscr{V}_i}\otimes\widehat{\Lambda^{i+1}\mathscr{V}^\ast_{i+1}}$ in $K_T(X_n)$, where $\mathscr{V}_i$ is the $i$-th tautological bundle over $X_n$, and are given by the following ratio of products of the corresponding Chern roots
\begin{equation}
p_i = \frac{s_{i+1,1}\cdot\dots\cdot s_{i+1,i+1}}{s_{i,1}\cdot\dots\cdot s_{i,i}},\,\qquad i=1,\dots, n-1\,.
\label{eq:pimomenta}
\end{equation}
Recall that $s_{n,i}=a_i$.
Using this fact and the definition of tRS operators \eqref{eq:tRSRelationsEl} we can define new quantum classes $\mathsf{V}^{(T_r)}_{\textbf{p}}$ for $r=1,\dots,n$. We can refer to them as \textit{tRS classes} (see \cite{Dinkins:2020nzu} for further development).

Notice that by acting with the tRS operators on the vertex function in the integral form \eqref{eq:GenerictRSSolution} we get
\begin{equation}
T_r(\boldsymbol{\zeta}) \mathsf{V}^{(1)}_{\textbf{p}} = \mathsf{V}^{(T_r)}_{\textbf{p}} \,,
\label{eq:tRSclass}
\end{equation}
where on the right we have a vertex function with descendant class $T_r$ in which $p_i$ are given by \eqref{eq:pimomenta}.
In particular, consider tRS Hamiltonian $T_n$ which according to \eqref{eq:tRSRelationsEl} reads $T_n=p_1\cdots p_n$.
From \eqref{eq:GenerictRSSolution} we immediately see that the corresponding tRS class is given by
$$
\mathsf{V}^{(T_n)}_{\textbf{p}}=a_1\cdots a_n \mathsf{V}^{(1)}_{\textbf{p}} = e_n(\textbf{a})\mathsf{V}^{(1)}_{\textbf{p}}\,.
$$
Indeed, the shift by $p_n$ of the exponential in front of the integral in \eqref{eq:GenerictRSSolution} returns $a_1\cdots a_n$ while all other shifts of $\zeta_m$ variables in the integrand cancel each other as they appear only as ratios $\zeta_m/\zeta_{m+1}$.

In other words, $\mathsf{V}^{(1)}_{\textbf{p}}$ is an eigenvector of $T_n$.
Since all tRS Hamiltonians commute with each other $[T_r,T_s]=0$, they share the set of eigenvectors; therefore, $\mathsf{V}^{(1)}_{\textbf{p}}$ is an eigenvector for all $T_r(\boldsymbol{\zeta})$, $r=1,\dots,n$ as well. It remains to be shown that the eigenvalues of the Hamiltonians $T_r(\boldsymbol{\zeta})$ are precisely given by the elementary symmetric functions $e_r(\boldsymbol{a})$.

In \cite{Koroteev:2017} (Theorem 3.4) it was proven that the eigenvalues of the multiplication operator by a quantum class $\widehat{\tau}$ in quantum K-theory of $X_n$ is given by $\tau(\textbf{s})$, where Chern roots $\textbf{s}$ of the corresponding virtual bundle solve the XXZ Bethe Ansatz equations for $X_n$ with $\textbf{s}$ playing the role of Bethe roots. It was also proven in \textit{loc. cit.} (Theorem 4.5) that these Bethe equations are equivalent to the classical tRS equations $T_r(\boldsymbol{\zeta})=e_r(\textbf{a})$. In other words, the eigenvalues of the multiplication by quantum tRS classes are precisely the corresponding elementary symmetric functions $e_r(\textbf{a})$.

%
\end{proof}

\subsection{Macdonald Polynomials}
It is well known that Macdonald polynomials (see \cite{MacDonald:286472} for review) are also eigenfunctions of tRS difference operators (Macdonald operators) \eqref{eq:tRSEigenz}. 
Indeed, for a given Young tableau $\lambda$ whose columns we denote by $\lambda_i,\,i = 1,\dots,n$, we can construct  
symmetric Macdonald polynomials of $n$ variables $P_\lambda(\boldsymbol{\zeta};q,\hbar)$. We will always assume that $\lambda$ has $n$ columns, if necessary we shall complete the diagram with empty columns.

In particular, we have
\begin{equation}
T_1(\boldsymbol{\zeta}) P_\lambda(\boldsymbol{\zeta};q,\hbar) = \left(\sum\limits_{i=1}^n q^{\lambda_i}\hbar^{i-n}\right) P_\lambda(\boldsymbol{\zeta};q,\hbar)\,.
\label{eq:tRSEigenzMac}
\end{equation}

We shall demonstrate momentarily that $P_\lambda(\boldsymbol{\zeta};q,\hbar)$ can be obtained from infinite series \eqref{eq:DefVvertMac} by specifying equivariant parameters $a_1,\dots,a_n$  which take values in a discrete lattice spanned by $q$ and $\hbar$.

\begin{proposition}
\label{Prop:Truncation}
Consider coefficient functions for K-theory of $\textsf{QM}$ to $X_n$  \eqref{eq:DefVvertMac} for all fixed points of the maximal torus. Let $\lambda$ be a partition of $k$ elements of length $n$ and $\lambda_1\geq\dots\geq\lambda_n$. Let  
\begin{equation}
\frac{a_{i+1}}{a_{i}}=q^{\ell_{i}}\hbar\,,\quad \ell_i = \lambda_{i+1}-\lambda_{i}\,, \quad i=1,\dots,n-1\,.
\label{eq:truncationa}
\end{equation}
Then there exists a fixed point $\textbf{q}$ for which
\begin{equation}
\mathsf{V}_{\textbf{q}} = P_\lambda(\boldsymbol{\zeta};q,\hbar)\,.
\end{equation}
\end{proposition}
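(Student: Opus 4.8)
The plan is to identify the two objects through the difference equations they both satisfy. By Theorem~\ref{Th:tRSeqproof} the normalized vertex $\mathsf{V}^{(1)}_{\textbf p}$ is, for every fixed point $\textbf p$, a joint eigenfunction of the commuting tRS operators $T_r(\vec\zeta)$ of \eqref{eq:tRSRelationsEl} with eigenvalue $e_r(\mathbf a)$. On the other hand $P_\lambda(\vec\zeta;q,\hbar)$ is a joint eigenfunction of the same operators, with $T_1$-eigenvalue $\sum_i q^{\lambda_i}\hbar^{i-n}$ by \eqref{eq:tRSEigenzMac} and, more generally, $T_r$-eigenvalue $e_r(q^{\lambda_1}\hbar^{1-n},\dots,q^{\lambda_n}\hbar^{0})$. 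The strategy is therefore: (i) specialize $\mathbf a$ so that the two eigenvalue tuples coincide; (ii) show that at this locus the relevant vertex series truncates to a symmetric polynomial with the correct top monomial; and (iii) invoke simplicity of the joint Macdonald spectrum to conclude $\mathsf{V}_{\textbf q}=P_\lambda$.

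Step (i) is immediate. Setting $a_i=q^{\lambda_i}\hbar^{i-n}$ gives $a_{i+1}/a_i=q^{\lambda_{i+1}-\lambda_i}\hbar=q^{\ell_i}\hbar$, which is exactly the locus \eqref{eq:truncationa}; an overall rescaling of the $a_i$ is harmless, as it only affects $\vec\zeta$-independent prefactors. On this locus $e_r(\mathbf a)=e_r(q^{\lambda_1}\hbar^{1-n},\dots,q^{\lambda_n}\hbar^{0})$ for all $r=1,\dots,n$, so the vertex and $P_\lambda$ obey the identical system of eigenvalue equations \eqref{eq:tRSEigenz}. The normalization in \eqref{eq:DefVvertMac} is tailored to produce the correct leading behaviour: using the quasi-periodicity $\theta(qx,q)=-x^{-1}\theta(x,q)$ iterated $\lambda_i$ times, each ratio $\theta(\hbar^{i-n}\zeta_i,q)/\theta(a_i\zeta_i,q)$ collapses at \eqref{eq:truncationa} to the monomial $\zeta_i^{\lambda_i}$ up to a $\vec\zeta$-independent constant, so the prefactor contributes precisely the top monomial $\vec\zeta^{\lambda}=\prod_i\zeta_i^{\lambda_i}$ of the monic Macdonald polynomial. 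The remaining series \eqref{eq:V1pdef} then supplies the lower terms in dominance order: in the parameterization $z^\sharp_i=\zeta_i/\zeta_{i+1}$ each summand carries $\prod_i(\zeta_i/\zeta_{i+1})^{d_i}$, which keeps the total degree equal to $|\lambda|$ while lowering the weight. The fixed point $\textbf q$ is singled out as the chain of subsets whose Chern roots $x_{i,j}$ reproduce, at the specialization, the spectral pattern $q^{\lambda_i}\hbar^{i-n}$ of $\lambda$; equivalently, the one whose $\textbf d=\vec 0$ term matches $\vec\zeta^\lambda$.

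The main obstacle is step (ii): proving that the sum over the chamber $\mathrm{C}$ actually terminates and yields a \emph{symmetric} polynomial. Termination should be forced by the numerator $q$-Pochhammer factors $(\hbar\,x_{i+1,k}/x_{i,j},q)_{d_{i,j}-d_{i+1,k}}$ in \eqref{eq:V1pdef}: at $a_i=q^{\lambda_i}\hbar^{i-n}$ the arguments become integral powers of $q$ and $\hbar$, so these factors develop zeros (resonances $q^{l}\hbar\,x_{i+1,k}/x_{i,j}=1$) that annihilate all but finitely many lattice points $\{d_{i,j}\}$. Controlling exactly which factors vanish — and hence verifying that the surviving sum has degree $|\lambda|$, has nonnegative exponents (the right support), and is symmetric in $\zeta_1,\dots,\zeta_n$ — is the technical heart of the argument, since the individual $\mathsf{V}^{(1)}_{\textbf p}$ are not symmetric for generic $\mathbf a$ and symmetry must emerge only on the resonance locus.

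Once truncation and symmetry are in hand, I would finish by the uniqueness characterization of Macdonald polynomials. For generic $q,\hbar$ the joint spectrum of $T_1,\dots,T_n$ is simple: distinct partitions produce distinct multisets $\{q^{\lambda_i}\hbar^{i-n}\}$, hence distinct tuples $\big(e_r(q^{\lambda_i}\hbar^{i-n})\big)_{r}$. Therefore a symmetric polynomial that is monic with top monomial $\vec\zeta^{\lambda}$ and is a $T_r$-eigenfunction with these eigenvalues is necessarily $P_\lambda(\vec\zeta;q,\hbar)$. Combining this with steps (i)–(iii) pins down $\mathsf{V}_{\textbf q}=P_\lambda$, as claimed.
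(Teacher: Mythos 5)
Your proposal follows essentially the same route as the paper's own proof: identify $\mathsf{V}_{\textbf q}$ with $P_\lambda(\vec\zeta;q,\hbar)$ by noting that both satisfy the tRS/Macdonald difference equations with matching eigenvalues at the locus \eqref{eq:truncationa}, that the vertex series truncates to a polynomial there for a single fixed point, and that a polynomial eigenfunction is unique up to normalization. Your write-up is in fact more explicit than the paper's about the truncation mechanism (resonant zeros of the $q$-Pochhammer numerators) and the collapse of the theta-prefactor to the top monomial, and it candidly flags the termination/symmetry verification that the paper likewise asserts without detailed proof.
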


\begin{proof}
Macdonald polynomials (independent on their normalization) satisfy difference equations of the form \eqref{eq:tRSEigenz}. 
By plugging \eqref{eq:truncationa} into the formula for the coefficient of the vertex function \eqref{eq:V1pdef} we observe that the series in $\zeta_i$ truncate at orders $\lambda_{i}$ thereby turning it into a polynomial. This polynomial is unique up to normalization.
\end{proof}

We can also rewrite $\eqref{eq:truncationa}$ as 
\begin{equation}
a_i = a q^{\lambda_i}\hbar^{i-n}\,,\quad i=1,\dots,n\,, 
\label{eq:aequivspec}
\end{equation}
where $a\in\mathbb{C}^\times$ is an arbitrary parameter, which later will be interpreted as an evaluation parameter for a Fock module, and $\lambda_{i} = \sum_{j=1}^i\ell_{n-j+1}$.
Note also that our normalization of Macdonald polynomials is slightly different from \cite{MacDonald:286472}.

\vspace{1mm}
\textbf{Remark.}
Note that out $n!$ fixed points of $T$ acting on $X_n$ only for single point $\textbf{q}$ we get a polynomial out of $V_{\textbf{q}}$. Other coefficient functions $V_{\textbf{p}}$ remain infinite series which we shall further ignore. One can verify by examining \eqref{eq:aequivspec} that in the $n\to\infty$ limit these coefficient functions will be suppressed. A choice of fixed point $\textbf{q}$ following the large-$n$ limits corresponds to zooming into a certain asymptotic region in which $|a_{\mathfrak{S}(1)}|\gg|a_{\mathfrak{S}(2)}|\gg\dots\gg |a_{\mathfrak{S}(n)}|$, where $\mathfrak{S}\in S_n$ is a permutation corresponding to the choice of fixed point $\textbf{q}$.
\begin{figure}[!h]
\includegraphics[scale=0.28]{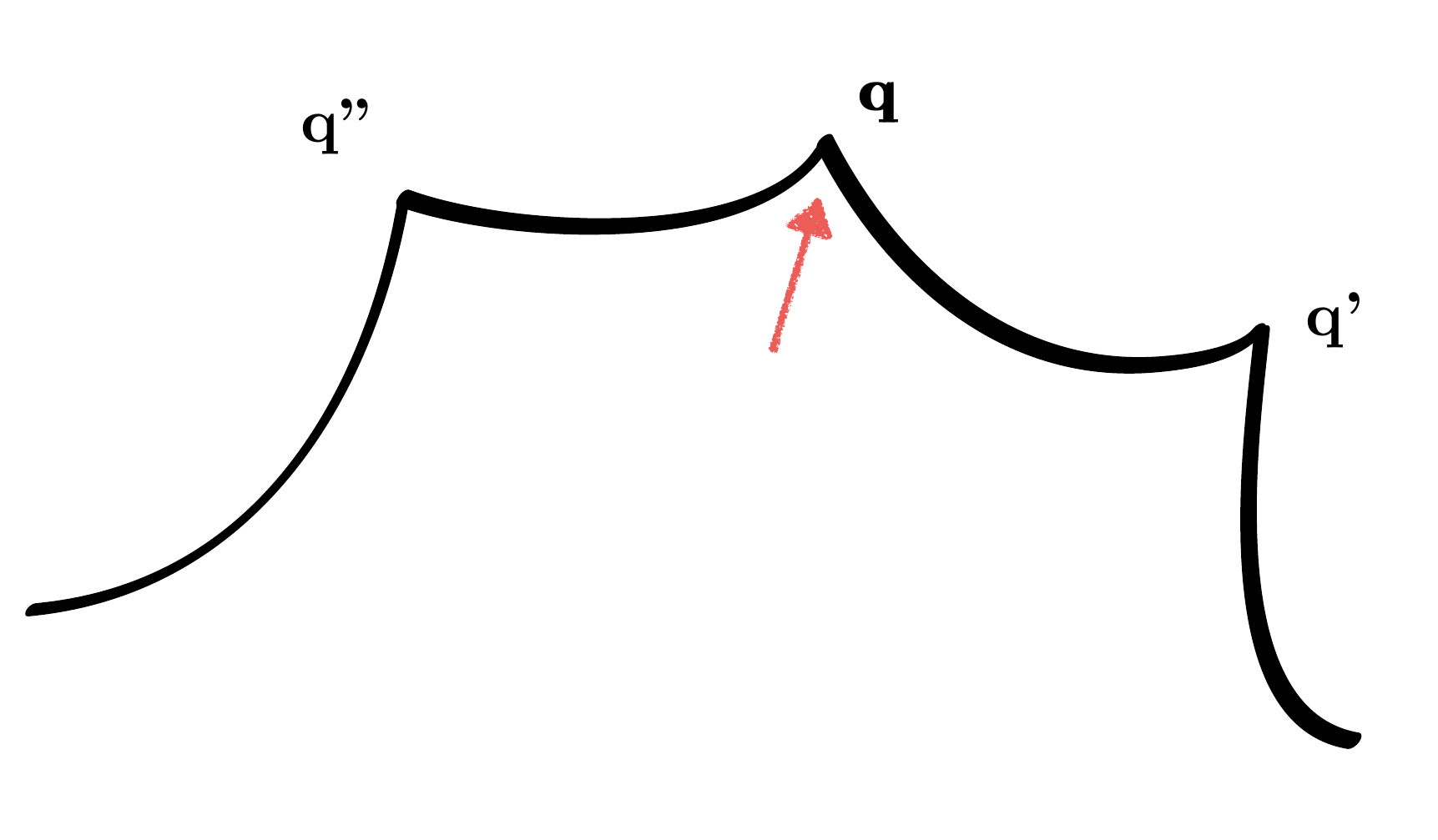}
\caption{Asymptotic regions of the space of $GL(n)$ equivariant parameters. In the $n\to\infty$ limit, provided that (\ref{eq:aequivspec}) holds, we approach the given fixed point.}
\label{fig:asympt}
\end{figure}

\vspace{1mm}
\textbf{Remark.}
The above theorem has an interesting interpretation from the point of view of enumerative geometry viewpoint and Gromov-Witten theory in particular. Formulae like \eqref{eq:V1pdef}, which are closed cousins of Givental J-functions, serve as generating functions of the equivariant gravitational descendants of $X_n$. Due to \eqref{eq:aequivspec} the q-hypergeometric-type series truncate to polynomials thereby implying that these equivariant gravitational descendants vanish identically starting from some order in $\zeta_i$'s. The  parameters of the problem get adjusted in such a way that the equivariant volume of the moduli spaces of quasimaps of degrees $(d_1,\dots,d_{n-1})$ which exceed $(\lambda_1,\dots,\lambda_{n-1})$ become zero.\footnote{I thank A. Okounkov and S. Katz for interesting discussions on these matters.}.

\vspace{1mm}
\textbf{Example.}
Let us look at the simplest example $X_2=T^*\mathbb{P}^1$. The corresponding vertex function is given by the q-hypergeometric function
\begin{equation}
V^{(1)}_{\textbf{p}}=
\sum_{d>0} \left(\frac{\zeta_1}{\zeta_2}\right)^d\,\prod_{i=1}^2
\frac{\left(\frac{q}{\hbar}\frac{a_\textbf{p}}{a_i};q\right)_d}{\left(\frac{a_\textbf{p}}{a_i};q\right)_d}= _2\!\!\phi_1\left(\hbar,\hbar\frac{a_{\textbf{p}}}{a_{\bar{\textbf{p}}}},q\frac{a_{\textbf{p}}}{a_{\bar{\textbf{p}}}};q;\frac{q}{\hbar}\frac{\zeta_1}{\zeta_2}\right)\,.
\label{eq:TP1V}
\end{equation}
Here $a_{\bar{\textbf{1}}}=a_{\textbf{2}}$ and $a_{\bar{\textbf{2}}}=a_{\textbf{1}}$. One can easily see that Weyl reflection of $\mathfrak{sl}_2$ interchanges $V^{(1)}_{\textbf{1}}$ and $V^{(1)}_{\textbf{2}}$. Then we have
\begin{equation}
\mathsf{V}_{\textbf{p}}=\frac{\theta(\zeta_1,q)}{\theta(a_1\zeta_1,q)}\frac{\theta(\hbar\zeta_2,q)}{\theta(a_2\zeta_2,q)}\cdot V^{(1)}_{\textbf{p}}\,.
\end{equation}
The condition \eqref{eq:truncationa} reads $a_1=a q^{\lambda_1}\hbar^{-1}\,,a_2= a q^{\lambda_2}$. Then we have the following Macdonald polynomials for some simple tableaux. 
\begin{align}
\mathrm{V}_{{\tiny\yng(1)}}&=\zeta _1+\zeta _2,\cr
\mathrm{V}_{{\tiny\yng(1,1)}}&=\zeta _1^2+\zeta _2^2+\frac{ (q+1) (\hbar -1)}{q \hbar -1}\zeta _1 \zeta _2,\cr
\mathrm{V}_{{\tiny\yng(1,1,1)}}&=\zeta _1^3+\zeta
   _2^3+\frac{ \left(q^2+q+1\right) (\hbar -1)}{q^2 \hbar -1}\zeta _2 \zeta _1^2+\frac{
   \left(q^2+q+1\right) (\hbar -1)}{q^2 \hbar -1}\zeta _2^2 \zeta _1\,.
\end{align}
Here the Young tableaux at the subscripts have their heights equal to $\lambda_2$. Without loss of generality we can assume $\lambda_n=0$ for all formulae. In other words, for $T^*\mathbb{P}^1$ the corresponding Macdonald polynomials depend only on one integer parameter (so-called Roger polynomials\footnote{To actually get Roger polynomials of one variable $x$ one needs to put $\zeta_1=\xi$ and $\zeta_1=\xi^{-1}$ and multiply the resulting expression by $\xi^{\lambda_2}$}). Otherwise the resulting polynomial will simply be a product of the corresponding Macdonald polynomial and a simple factor depending on $\lambda_n$. For instance
\begin{equation}
\mathrm{V}_{{\tiny\yng(1,2)}}=\zeta_2\zeta _1+\zeta^2_2=\zeta_2 \mathrm{V}_{{\tiny\yng(1)}}\,.
\end{equation}

Thus we have shown that classes \eqref{eq:DefVvertMac} of equivariant K-theory of the moduli space of quasimaps from $\mathbb{P}^1$ to $T^*\mathbb{F}l_n$ upon specification of equivariant parameters of the maximal torus of $\mathfrak{gl}_n$ according to \eqref{eq:aequivspec} reduce to symmetric Macdonald polynomials of $n$ variables. It is well known in the literature that Macdonald polynomials appear in K-theory of Hilbert scheme of points on $\mathbb{C}^2$. Our goal in the next section is to provide a precise relationship between K-theories of these two types of Nakajima quiver varieties.

\subsection{Geometric Meaning of the tRS Model}
Macdonald operators $T_1,\dots, T_n$ \eqref{eq:tRSRelationsEl} form a maximal commuting subalgebra in spherical double affine Hecke algebra (DAHA) for $\mathfrak{gl}_n$ \cite{10.2307/2118632,Cherednik:1990054} which we shall call $\mathfrak{A}_n$ here for short. It was shown by Oblomkov \cite{A.Oblomkov:, Oblomkov:cub} that this algebra arises as deformation quantization of the moduli space of $GL(n,\mathbb{C})$ flat connections on a single-punctured torus $\mathcal{M}_{\text{flat}}(GL(n,\mathbb{C}), T^2\backslash\{\text{pt}\})$\footnote{This moduli space is a hyperK\"ahler manifold and the quantization is given in complex structure $J$.} (also known as the Calogero-Moser space). Parameter $q$ is identified with the quantization parameter and $\hbar$ is the sole distinct eigenvalue of the $GL(n,\mathbb{C})$ monodromy matrix. Most of work in the literature is done on full DAHA, whereas here we are interested in its spherical part\footnote{It is spherical DAHA which often appears in physics applications, albeit with some exceptions \cite{KorGukNavSab:DAHA}}. Let us first describe the geometry behind the tRS integrable system.

The classical limit $q\to 1$ of tRS equations \eqref{eq:tRSEigenz} has a direct interpretation in terms of the moduli space $\mathcal{M}_{\text{flat}}(GL(n,\mathbb{C}), T^2\backslash\{\text{pt}\})$, which is identified with the phase space of the tRS model. This space can be described by the holonomy matrices $A$ and $B$ around the two cycles of the torus, which obey the fundamental group relation of the punctured torus 
\begin{equation}
ABA^{-1}B^{-1}=C\,,
\label{eq:ABCflat}
\end{equation}
where $C=\text{diag}(\hbar,\dots, \hbar, \hbar^{1-n})$ is the holonomy matrix around the puncture. The above flatness condition can be solved \cite{Gaiotto:2013bwa} in terms of Fenchel-Nielsen coordinates $(\zeta_i,p_i),\,i=1,\dots,n$ on the moduli space, and in the basis where matrix $A=\text{diag}(a_1,\dots,a_n)$ is diagonal, Casimir invariants of $B$ reproduce the tRS Hamiltonians \eqref{eq:tRSRelationsEl}
\begin{equation}
T_r(\boldsymbol{\zeta}) = \text{Tr}_{\Lambda^r} B\,,\qquad e_r(\vec{a}) = \text{Tr}_{\Lambda^r} A\,,
\end{equation}
while the Casimirs of $A$ are merely the symmetric functions for the equivariant parameters. 

The equality between the left and right hand sides of \eqref{eq:tRSEigenz} is the consequence of the $SL(2,\mathbb{Z})$ symmetry of $\mathcal{M}_{\text{flat}}(GL(n,\mathbb{C}), T^2\backslash\{\text{pt}\})$. Under the $S$ transformation of $SL(2,\mathbb{Z})$ holonomies  $A$ and $B$ get interchanged. One can immediately see that the defining equation \eqref{eq:ABCflat} transforms into itself if in addition we invert matrix $C$. In order to establish \eqref{eq:tRSEigenz} one needs to consider the following space $\mathcal{M}_L\times\mathcal{M}_R$ which is a Cartesian product of two copies of the moduli space of flat connections, with local coordinates $(\zeta_i,p_i,\hbar)$ and $(a_i,p^{!}_i,\hbar^{-1})$ respectively. 

\begin{proposition}
Consider the product of two tRS phase spaces $\mathcal{M}_L\times\mathcal{M}_R$ as described above which is equipped with the following symplectic form 
\begin{equation}
\Omega_{L,R} = \sum_{i=1}^n \frac{d\zeta_i}{\zeta_i}\wedge\frac{dp_i}{p_i} - \sum_{i=1}^n \frac{da_i}{a_i}\wedge\frac{dp^{!}_i}{p^{!}_i} \,.
\end{equation}
Let $\mathcal{L}_a$ be a Lagrangian submanifold in $\mathcal{M}_L\times\mathcal{M}_R$ which is given by specifying the eigenvalues of matrix $A$ in \eqref{eq:ABCflat}. Then this Lagrangian is defined by the tRS equations of motion \eqref{eq:tRSEigenz} and symplectic form $\Omega_{L,R}$ vanishes on $\mathcal{L}_a$.
\end{proposition}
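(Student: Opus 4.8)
The plan is to translate the condition defining $\mathcal{L}_a$ into the trace invariants of the two holonomies, and then to recognize $\mathcal{L}_a$ as a product of a tRS Liouville torus on $\mathcal{M}_L$ with a cotangent fiber on $\mathcal{M}_R$, on which $\Omega_{L,R}$ manifestly vanishes. First I would fix the dimensions: each factor is symplectic of dimension $2n$, since $\omega_L=\sum_{i=1}^n\frac{d\zeta_i}{\zeta_i}\wedge\frac{dp_i}{p_i}$ and $\omega_R=\sum_{i=1}^n\frac{da_i}{a_i}\wedge\frac{dp^{!}_i}{p^{!}_i}$ each have $n$ terms, so a Lagrangian in the product has dimension $2n$. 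Next I would make the defining equation explicit. On $\mathcal{M}_L$, in the frame where $A=\mathrm{diag}(a_1,\dots,a_n)$, one has $e_r(\mathbf a)=\mathrm{Tr}_{\Lambda^r}A$ and $T_r(\vec\zeta)=\mathrm{Tr}_{\Lambda^r}B$; thus prescribing the spectrum of $A$ fixes the right-hand sides $e_r(\mathbf a)$, and the classical limit of \eqref{eq:tRSEigenz} (replacing the shift operators $p_k$ by classical momenta) becomes the system $T_r(\vec\zeta,\vec p)=e_r(\mathbf a)$, $r=1,\dots,n$. This is precisely the statement that $\mathcal{L}_a$ is cut out by the tRS equations of motion, which is the first assertion.

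It remains to show that the resulting locus is Lagrangian. The key point is that the functions $T_r=\mathrm{Tr}_{\Lambda^r}B$ are conjugation-invariant class functions of a single holonomy $B$, and such functions Poisson-commute with respect to the Atiyah--Bott/Goldman bracket on $\mathcal{M}_{\text{flat}}(GL(n,\mathbb{C}),T^2\backslash\{\text{pt}\})$; this is exactly the classical Liouville integrability of the tRS model. Hence the common level set $\{T_r(\vec\zeta,\vec p)=e_r(\mathbf a)\}\subset\mathcal{M}_L$ of $n$ independent commuting Hamiltonians is isotropic of dimension $n$, so $\omega_L$ restricts to zero on it. On $\mathcal{M}_R$, where the $a_i$ are the Fenchel--Nielsen position coordinates, specifying the eigenvalues of $A$ pins them to constants, so that $da_i=0$ and $\omega_R$ restricts to zero on the cotangent fiber $\{a_i=\mathrm{const}\}$, again of dimension $n$. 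Taking the product of these two $n$-dimensional isotropics yields a $2n$-dimensional $\mathcal{L}_a$ on which $\Omega_{L,R}=\omega_L-\omega_R$ vanishes identically; note that the relative sign is immaterial for this isotropy argument and is instead dictated by the $SL(2,\mathbb{Z})$ duality frame (the $S$-transformation $A\leftrightarrow B$, $C\mapsto C^{-1}$, $\hbar\mapsto\hbar^{-1}$).

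The step I expect to be the main obstacle is the involutivity and functional independence underlying the Lagrangian property, that is, verifying that the classical limit of the difference operators $T_r$ really reproduces the single-holonomy trace invariants $\mathrm{Tr}_{\Lambda^r}B$ and that these are independent for generic $\mathbf a$. The cleanest way to secure this, and to make the relative sign of $\Omega_{L,R}$ genuinely meaningful, is to exhibit the tRS twisted superpotential $\mathcal{W}(\log\vec\zeta,\log\vec a)$ with $\log p_i=\partial_{\log\zeta_i}\mathcal{W}$ and $\log p^{!}_i=-\partial_{\log a_i}\mathcal{W}$: equality of the mixed Hessians then gives $\sum_i\frac{d\zeta_i}{\zeta_i}\wedge\frac{dp_i}{p_i}=\sum_i\frac{da_i}{a_i}\wedge\frac{dp^{!}_i}{p^{!}_i}$ on $\mathcal{L}_a$, realizing $\mathcal{L}_a$ as the graph of a symplectomorphism and recovering $\Omega_{L,R}|_{\mathcal{L}_a}=0$. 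Constructing $\mathcal{W}$ explicitly, or equivalently invoking the $SL(2,\mathbb{Z})$-invariance of the Goldman form to conclude that the $S$-map is a symplectomorphism, is the delicate input; the remaining identifications follow formally from the trace relations $T_r=\mathrm{Tr}_{\Lambda^r}B$, $e_r=\mathrm{Tr}_{\Lambda^r}A$ and the flatness constraint \eqref{eq:ABCflat}.
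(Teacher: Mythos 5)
The paper does not actually supply a proof of this proposition: immediately after the statement it defers to \cite{Bullimore:2015fr}, Sec.~2.2, so your proposal is being compared against a citation rather than an argument. Your reconstruction is essentially the right one, and your final paragraph --- exhibiting $\mathcal{L}_a$ as the graph of the map generated by a twisted superpotential $\mathcal{W}(\log\vec\zeta,\log\vec a)$ with $\log p_i=\partial_{\log\zeta_i}\mathcal{W}$, $\log p_i^{!}=-\partial_{\log a_i}\mathcal{W}$, so that equality of mixed Hessians kills $\Omega_{L,R}$ --- is precisely the mechanism used in the cited reference. Your identification of the classical limit of \eqref{eq:tRSEigenz} with the level sets $T_r(\vec\zeta,\vec p)=e_r(\mathbf a)=\mathrm{Tr}_{\Lambda^r}A$, and the involutivity of the $\mathrm{Tr}_{\Lambda^r}B$ via the Goldman bracket, are both correct and give the first assertion cleanly.

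The one place you should be careful is your first isotropy argument, which proves a degenerate version of the statement. If $\mathcal{L}_a$ were literally $\{T_r=e_r(\mathbf a)\}\times\{a_i=\mathrm{const}\}$ with the $p_i^{!}$ unconstrained, then the entire $\mathcal{M}_R$ factor contributes nothing ($\omega_R$ restricts to zero on any fiber over a point), the relative minus sign in $\Omega_{L,R}$ is vacuous, and the construction never uses the second copy of the moduli space at all --- which defeats the purpose of introducing $\mathcal{M}_L\times\mathcal{M}_R$. The Lagrangian intended here (and spelled out in the reference) is the graph on which the \emph{dual} relations $T_r(\vec a,\vec p^{!})=e_r(\vec\zeta)$ also hold, determining the $p_i^{!}$ in terms of the remaining data; only then is the minus sign essential and only then does the vanishing of $\Omega_{L,R}$ encode the nontrivial statement that the $S$-transformation $A\leftrightarrow B$, $C\mapsto C^{-1}$ is a symplectomorphism. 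Since your closing paragraph supplies exactly this via the generating function, your proof is complete once you discard (or demote to a dimension count) the product-of-isotropics reading and let the graph argument carry the full weight.
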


We refer the reader to \cite{Bullimore:2015fr}, Sec. 2.2, for more details. There is also a symplectic dual version of this proposition for Lagrangian $\mathcal{L}_\zeta\subset \mathcal{M}_L\times\mathcal{M}_R$ which is described by specifying the eigenvalues of $B$. This leads to the dual tRS model, in which where Hamiltonians act on the equivariant parameters and the their eigenvalues are symmetric functions of quantum parameters \cite{Koroteev:2018}.

Quantization of the tRS integrable system is the achieved by imposing $\zeta_i p_j = q^{\delta_{ij}}p_j \zeta_i$.

\subsection{Spherical DAHA and Elliptic Hall Algebra}
For the purposes of this work we will not need complete description of DAHA, nevertheless let us briefly describe its structure. Full (non-spherical) $\mathfrak{gl}_n$ DAHA is a $\mathbb{C}(q,\hbar)$ associative algebra which is formed by combining two copies of affine Hecke algebras $\{t_i,X_i\}$ and $\{t_i,Y_i\}$, $i=1,\dots, n$ modulo certain relations (see \cite{10.2307/2118632} for review). Here $t_i$ are elements of $\mathfrak{gl}_n$ Hecke algebra which obey braid group relations.
Spherical subalgebra is obtained by conjugating all elements of DAHA by the total idempotent $S$. It can be shown \cite{Cherednik:1990054,Schiffmann:2008aa} that for $l>0$
\begin{equation}
Z^n_{0,l}= S\sum_i Y_i^l\, S\,,\quad Z^n_{0,-l}= q^l S\sum_i Y_i^l\, S\,,\quad Z^n_{l,0}= q^l S\sum_i X_i^l\, S\,,\quad Z^n_{-l,0}=S\sum_i X_i^l\, S\,
\label{eq:SpericalDAHAGen}
\end{equation}
generate $\mathfrak{A}_n$. In particular, the tRS operators $T_r$ are symmetric functions of $Y_i$ generators of DAHA
\begin{equation} 
T_r = e_r(Y_1,\dots, Y_n)\,.
\end{equation}
Geometrically, in terms of K-theory of $X_n$, we can define classes $\mathsf{V}^{(Y_i)}_{\textbf{p}}$ whose symmetrization yields tRS classes \eqref{eq:tRSclass}.

In the above formula $Z^n_{0,l}$ represent Macdonald operators $T_l$ \eqref{eq:tRSRelationsEl}, $Z^n_{\pm l,0}$ are  multiplication operators by a symmetric polynomial of $n$ variables of degree $l$. Elements $Z^n_{0,-l}$ can be diagonalized in the basis of Macdonald polynomials and their eigenvalues can be readily computed \cite{Cherednik:1990054}.

Let us define
\begin{equation}
\Lambda_n := \mathbb{Q}(q,\hbar)[\zeta_1,\dots,\zeta_n]^{S_n}
\end{equation}
to be the space symmetric polynomials over $\mathbb{Q}(q, \hbar)$ of n-variables. 
Algebra $\mathfrak{A}_n$ acts on the span of symmetric Macdonald polynomials $P_{\lambda}\in \Lambda_n$. Macdonald operators $Z^n_{0,l}$ act on polynomials $P_{\lambda}$ by multiplication by symmetric polynomial of equivariant parameters $s_l(a_1,\dots,a_n)$ evaluated at locus \eqref{eq:aequivspec}. 
Coefficient vertex functions \eqref{eq:V1pdef} evaluated at \eqref{eq:aequivspec} therefore can be treated as highest weight vectors for $\mathfrak{A}_n$ modules over $\Lambda_n$. Unless we further specify the values of $q$ and $\hbar$ these modules, which we shall refer to as $M_n$, are infinite-dimensional and contain Macdonald polynomials parameterized by Young tableaux $\lambda=\{\lambda_1,\dots,\lambda_n\}$ with $n$ columns. Proper combinations of generators \eqref{eq:SpericalDAHAGen} of $\mathfrak{A}_n$ will act on the states in the highest weight module by adding to or removing boxes from tableau $\lambda$ thereby producing a different Macdonald polynomial. The corresponding explicit formulae have only been worked out for $\mathfrak{A}_n$ of lower ranks \cite{Kirillov:,Suzuki:}.

In a parallel development Varagnolo and Vasserot \cite{Varagnolo:2009} studied the action of $\mathfrak{A}_n$ on K-theory of affine flag varieties. We expect to find a direct correspondence between their results and our approach.

For certain values of $q$ and $\hbar$ modules $M_n$ may become reducible and develop proper submodules. The submodule and the corresponding finite-dimensional factor module together with $M_n$ form a short exact sequence. Representation theory of $\mathfrak{A}_n$, as well as DAHA itself are rather involved and at the present moment not fully understood even for algebras of low rank. We may expect some simplifications while working with the spherical subalgebra due to $S_n$ invariance. However, already for $\mathfrak{sl}_2$ spherical DAHA, classification of finite-dimensional representations is extremely nontrivial \cite{KorGukNavSab:DAHA}. 

In the current work we will be interested in the limit $n\to\infty$ of algebra $\mathfrak{A}_n$ and its modules. Such stable limit of DAHA is known in the literature and goes by different names -- elliptic Hall algebra, Ding-Iohara-Miki algebra, Drinfeld double of shuffle algebra, quantum toroidal algebra $U_{t_1,t_2}(\hat{\hat{\mathfrak{gl}_1}})$ (see \cite{Negut:2012aa} for references therein and for a side-by-side comparison). We denote this algebra by $\mathfrak{E}$.
Thanks to the recent progress we have a fair understanding of the representation theory of $\mathfrak{E}$ including its various modules -- Fock and MacMahon modules. In \cite{Schiffmann:2009} it was shown that there is a surjective algebra homomorphism $\mathfrak{E}\to\mathfrak{A}_n$. In this paper we shall provide a map between modules of these algebras.

\subsection{Highest weight modules of $\mathfrak{A}_n$}
Consider the highest weight module 
\begin{equation}
M_n=\left\{P_{\lambda}(\boldsymbol{\zeta}|q,\hbar)\,\Big| \ell(\lambda)< n\right\}\,,
\label{eq:Mnmodule}
\end{equation}
where $\ell(\lambda)$ is the number of columns of tableau $\lambda$ in more detail. If $\lambda$ has $k$ boxes and $k<n$ then we assume the remaining columns to be empty $\{\lambda_1,\dots,\lambda_{\ell(\lambda)},0,\dots,0\}$. The module has a natural grading by the number of boxes $|\lambda|=k$ 
\begin{equation}
M_n = \bigoplus_{k} M_n^k\,.
\end{equation}
By construction when $k\leq n$ the component $M_n^k$ will contain $P_\lambda$'s for all possible young tableau of $k$ boxes, whereas for $k>n$ this component will only contain some fraction of polynomials. The raising and lowering operators of $\mathfrak{A}_n$ act as follows
\begin{equation}
\mathfrak{e}^k_{i,j}: M_n^k\to M_n^{k+1}\,,\quad \mathfrak{f}^k_{i,j}: M_n^k\to M_n^{k-1}\,,
\end{equation}
by adding and removing boxes to/from $\lambda$ at location $(i,j)$. While currently we do not have explicit formulae for these operators, but their existence can be inferred from Theorem \ref{Prop:SchVassHom}.

\subsection{Power-Symmetric Basis}
In order to find the connection with the K-theory of Hilbert schemes of points we need to make a change of variables in Macdonald polynomials $P_\lambda(\boldsymbol{\zeta};q,\hbar)$ as follows
\begin{equation}
x_a =\sum_{i=1}^n \zeta^a_i\,.
\label{eq:PowerSymBasis}
\end{equation}
Once this power-symmetric change of variables is implemented polynomials $P_\lambda$ only depend on the number of boxes of tableau $\lambda$ -- $k$ do not explicitly depend on $n$.

\section{Hilbert Scheme of Points on $\mathbb{C}^2$}\label{Sec:HIlb}
Macdonald polynomials can be identified with classes of fixed points in equivariant K-theory of Hilbert scheme of points on a plane. $\text{Hilb}^n(\mathbb{C}^2)$, where $n$ is the number of points, is defined as a space of ideals $\mathcal{J}$ in $\mathbb{C}[x,y]$ of colength $n$
\begin{equation}
\text{dim}_\Complex \Complex[x,y]/\mathcal{J} = n\,.
\label{eq:idealHilbdim}
\end{equation}
We work equivariantly with respect to the maximal torus $(\Complex^\times)^2$ of $\Complex^2$ with weights $q$ and $\hbar$ acting as $(q,\hbar)p(x,y)=p(qx,\hbar y)$. It is known that the ideals of finite codimention which are fixed under the action of this torus are in one-to-one correspondence with partitions $\lambda$ of $n$. Such ideals are generated by monomials $\{ x^{\lambda_1}, x^{\lambda_2}y, \dots, x^{\lambda_m}y^m, y^{m+1} \}$ where $m$ is the number of columns in tableau $\lambda$.

The localized equivariant K-theory of $\text{Hilb}^n$ can be identified \cite{Bridgeland:,Haiman:} with the space of polynomials
\begin{equation}
K_{q,\hbar}(\text{Hilb}^n) = \Complex[x_1,\dots,x_n]\otimes\Complex[q,t]\,.
\label{eq:KthHilbn} 
\end{equation}
Classes of fixed points $[\mathcal{J}]$ of $(\Complex^\times)^2$ parameterized by $\lambda$ provide a canonical basis in the K-theory. They coincide with Macdonald polynomials of variables $x_1,\dots, x_n$ \eqref{eq:PowerSymBasis}.

\subsection{Elliptic Hall Algebra $\mathfrak{E}$}
Following \cite{Schiffmann:2008aa,Schiffmann:2009} algebra $\mathfrak{E}$ is generated by elements $Z_{n,m},\, n,m\in\mathbb{Z}$ modulo certain relations which we shall not specify here. These generators can be conveniently positioned in integral lattice $\mathbb{Z}^2$ of the coordinate plane where $(n,m)$ will be their $x$ and $y$-coordinates. By normalization $Z_{0,0}=1$ is the identity operator. The commutation relations of $\mathfrak{E}$ suggest that the entire algebra can be generated by four elements $Z_{0,\pm 1}, Z_{\pm 1,0}$. In the given normalization generators on the horizontal axis $Z_{n,0}$ are multiplications by $x^n$, whereas generators on the vertical axis $Z_{0,l}$ for $l>0$ are Macdonald operators written in $x$-basis. In \eqref{eq:SpericalDAHAGen} we have already used these generators to construct $\mathfrak{A}_n$. It is known \cite{Burban:} that $\mathfrak{E}$ acts faithfully on $K_{q,\hbar}(\text{Hilb}^n)$.

This following theorem by Schiffmann and Vasserot enabled the authors to construct (more or less by definition) $\mathfrak{E}$ as a stable limit of spherical DAHA $\mathfrak{A}_\infty$.
\begin{theorem}[\cite{Schiffmann:2008aa}]
There is an explicit surjective algebra homomorphism 
\begin{equation}
\mathfrak{E}\to\mathfrak{A}_n\,.
\end{equation}
\label{Prop:SchVassHom}
\end{theorem}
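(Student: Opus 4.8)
The plan is to construct the surjection $\mathfrak{E}\to\mathfrak{A}_n$ by exhibiting an explicit assignment on the four generators $Z_{0,\pm1},Z_{\pm1,0}$ of $\mathfrak{E}$ and verifying that the defining relations of $\mathfrak{E}$ are respected. First I would recall that $\mathfrak{A}_n$ is generated by the elements listed in \eqref{eq:SpericalDAHAGen}, in particular by $Z^n_{0,\pm1}$ (the first nontrivial Macdonald operators $T_1$ and their conjugates) and $Z^n_{\pm1,0}$ (multiplication by power-sum symmetric functions of degree one). The natural map sends the abstract lattice generators $Z_{0,\pm1},Z_{\pm1,0}\in\mathfrak{E}$ to the correspondingly named concrete operators $Z^n_{0,\pm1},Z^n_{\pm1,0}$ acting on $\Lambda_n$. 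Since these four elements generate $\mathfrak{E}$, specifying their images determines a homomorphism provided the relations hold; and since the images already generate all of $\mathfrak{A}_n$ by \eqref{eq:SpericalDAHAGen}, any such homomorphism is automatically surjective. Thus surjectivity is essentially free once the map is shown to be well-defined.

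The substance is therefore the well-definedness: one must check that the relations of $\mathfrak{E}$ — which encode how the horizontal generators $Z_{\bullet,0}$, the vertical generators $Z_{0,\bullet}$, and the various degree-$(1,\pm1)$ commutators interact — are satisfied by the spherical DAHA operators. I would organize this around the lattice/slope structure of $\mathfrak{E}$: the horizontal subalgebra (a commutative ring of symmetric-function multiplications) and the vertical subalgebra (the commuting Macdonald operators $T_r=e_r(Y_1,\dots,Y_n)$) each map to manifestly commutative subalgebras, so the intra-ray relations are immediate. The mixed relations reduce to computing commutators such as $\comm{Z^n_{1,0}}{Z^n_{0,1}}$ and matching them against the elliptic-Hall quadratic/cubic relations; these are precisely the relations whose verification in DAHA goes back to the computations of Cherednik and of Schiffmann--Vasserot, and I would invoke the structure theory of $\mathfrak{A}_n$ rather than recompute them by hand.

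The main obstacle is controlling the $n$-dependence of the relations so that the same set of abstract relations maps consistently for every $n$. The elliptic Hall algebra $\mathfrak{E}$ has relations with universal (i.e. $n$-independent) structure constants in $q,\hbar$, whereas the operators $Z^n_{\ast,\ast}$ carry explicit dependence on the rank $n$ (e.g. through the number of Chern roots $s_{i,j}$ and through factors like $\hbar^{i-n}$ appearing in the eigenvalues \eqref{eq:tRSEigenzMac}). The delicate point is that certain commutators in DAHA produce terms proportional to the central/grading elements, and one must verify that, for finite $n$, the image of the $\mathfrak{E}$-relation closes inside $\mathfrak{A}_n$ up to exactly these central contributions — this is the sense in which the map is a surjection with a nontrivial kernel rather than an isomorphism. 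I would handle this by passing to the power-symmetric variables \eqref{eq:PowerSymBasis}, in which, as noted in the excerpt, the action on $P_\lambda$ stabilizes and loses explicit $n$-dependence, thereby making the universal $\mathfrak{E}$-relations and their $n$-truncated images directly comparable. Concretely, I expect the proof to conclude by exhibiting the generators \eqref{eq:SpericalDAHAGen} as the images of lattice points of $\mathfrak{E}$ and checking the finitely many generating relations of $\mathfrak{E}$ on these images in the power-sum basis, with the residual central terms absorbed into the kernel of the surjection.
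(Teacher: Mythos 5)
First, a point of comparison: the paper does not prove this statement at all --- it is imported verbatim from Schiffmann--Vasserot \cite{Schiffmann:2008aa}, and the surrounding text only uses it as a black box to relate modules of $\mathfrak{A}_n$ and $\mathfrak{E}$. So your proposal is being measured against the proof in the cited reference rather than against anything in this paper. At the level of strategy your outline does track that reference: one fixes a presentation of $\mathfrak{E}$, sends the lattice generators $Z_{0,\pm1}, Z_{\pm1,0}$ to the correspondingly named spherical DAHA elements of \eqref{eq:SpericalDAHAGen}, checks the relations, and gets surjectivity for free because the images generate $\mathfrak{A}_n$.

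However, as a proof the proposal has a genuine gap at its center: the entire content of the theorem is the verification that the elliptic Hall relations (the mixed ones between horizontal and vertical rays) are satisfied by $S\sum_i X_i^l S$ and $S\sum_i Y_i^l S$, and you dispose of exactly this step by saying you would ``invoke the structure theory of $\mathfrak{A}_n$ rather than recompute.'' There is no prior structure theory to invoke here --- establishing those commutation relations inside spherical DAHA (via the $SL(2,\mathbb{Z})$ action, Cherednik's intertwiners, or the shuffle realization) \emph{is} the theorem, so the argument as written is circular. Two further points need repair. (i) ``These four elements generate $\mathfrak{E}$, so specifying their images determines a homomorphism provided the relations hold'' requires a complete presentation of $\mathfrak{E}$ by generators and relations; that presentation (Burban--Schiffmann) has relations indexed by pairs of lattice points, not an obviously finite list, and reducing to a manageable generating set of relations is itself nontrivial. (ii) Your mechanism for handling the $n$-dependence is misplaced: passing to the power-sum variables \eqref{eq:PowerSymBasis} to ``lose explicit $n$-dependence'' is what produces the \emph{stable limit} $n\to\infty$ (Proposition \ref{Prop:FockDI}), not the finite-$n$ surjection. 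At finite $n$ the power sums $p_l(\zeta_1,\dots,\zeta_n)$ for $l>n$ are algebraically dependent on the lower ones, and it is chiefly these dependencies --- together with the specialization of the central elements of $\mathfrak{E}$ --- that populate the kernel; attributing the kernel solely to ``central contributions'' misidentifies where the failure of injectivity comes from.
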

Using this theorem we can study the connection between modules of $\mathfrak{A}_n$ and $\mathfrak{E}$.

\subsection{Fock Modules of Elliptic Hall Algebra}
Generators $Z_{m,n}$ for which $\frac{m}{n}=s\in\mathbb{Q}$ form a subalgebra of $\mathfrak{E}$ which can be described as $q,\hbar$-deformed Heisenberg algebra with commutation relations
\begin{equation}
[a_m,a_n]=m\frac{1-q^{|m|}}{1-\hbar^{|m|}}\delta_{m,-n}\,,
\label{eq:commutatorsa}
\end{equation}
where $a_n, n\in\mathbb{Z}$ are the corresponding generators of $\mathfrak{E}$ which lie on slope $s$ \cite{Burban:}. The entire $\mathbb{Z}^2$ plane parameterizes by $Z_{m,n}$ can be sliced by lines with all possible rational slopes passing through the origin. Each slope represents itself a Heisenberg algebra. Here we are using slightly different normalization of generators. This normalization is more appropriate in Ding-Iohara algebra which, as we have already mentioned, is isomorphic to $\mathfrak{E}$.

Given the above Heisenberg algebra we can study its Fock space representation $F(a)$ which is constructed in the standard way.

\begin{proposition}
\label{Prop:FockDI}
Let $M_n$ be a highest weight module of $\mathfrak{A}_n$. Then its projective limit $n\to\infty$ is isomorphic to the Fock module $F(a)$ of $\mathfrak{E}$ with evaluation parameter $a$ \eqref{eq:aequivspec}.
\end{proposition}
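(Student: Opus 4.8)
The plan is to identify the projective limit with the ring of symmetric functions and to transport the $\mathfrak{E}$-action onto it through the stable homomorphism of Theorem~\ref{Prop:SchVassHom}, matching the result with the standard bosonic Fock space.

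First I would describe the limit as a graded vector space. After the power-symmetric change of variables \eqref{eq:PowerSymBasis}, each Macdonald polynomial $P_\lambda(\vec{\zeta};q,\hbar)$ becomes a symmetric function depending only on $|\lambda|$ and not explicitly on the number of variables $n$. The restriction maps $\Lambda_{n+1}\to\Lambda_n$ obtained by setting $\zeta_{n+1}=0$ then send $P_\lambda\mapsto P_\lambda$ whenever $\ell(\lambda)\le n$ and $P_\lambda\mapsto 0$ otherwise; these are precisely the structure maps whose inverse limit is the ring of symmetric functions $\Lambda:=\varprojlim_n\Lambda_n$, graded by $|\lambda|=k$. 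Hence $\varprojlim_n M_n$ is, as a graded vector space, $\Lambda=\bigoplus_k\Lambda^k$ with the Macdonald symmetric functions $\{P_\lambda\}$ over all partitions $\lambda$ as a basis. This already matches the underlying graded vector space of $F(a)$, which carries one basis vector per partition.

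Second I would transport the algebra action. The homomorphisms $\mathfrak{E}\to\mathfrak{A}_n$ of Theorem~\ref{Prop:SchVassHom} are compatible with increasing $n$ --- this is the sense in which $\mathfrak{E}$ is the stable limit $\mathfrak{A}_\infty$ of spherical DAHA --- so the actions of $\mathfrak{E}$ on the $M_n$ should intertwine the restriction maps and descend to a single action of $\mathfrak{E}$ on $\varprojlim_n M_n$. Concretely, I would verify that the matrix coefficients of the raising and lowering operators $\mathfrak{e}^k_{i,j},\mathfrak{f}^k_{i,j}$ in the $P_\lambda$-basis stabilize for $n\gg|\lambda|$, so that each generator $Z_{m,l}$ of $\mathfrak{E}$ acquires a well-defined limit acting on $\Lambda$.

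Third I would identify the resulting $\mathfrak{E}$-module with the Fock module $F(a)$. The vacuum is $P_\emptyset=1$; the horizontal Heisenberg subalgebra, whose degree-raising generators $Z_{l,0}$ ($l>0$) act by multiplication by the power sums $x_l$ of \eqref{eq:PowerSymBasis} and whose degree-lowering generators annihilate $1$, with the commutation relations \eqref{eq:commutatorsa}, generates the whole of $\Lambda=\mathbb{Q}(q,\hbar)[x_1,x_2,\dots]$ from the vacuum. Matching the box-grading $|\lambda|=k$ with the principal grading of $F(a)$ and comparing the diagonal action of the Macdonald operators $Z_{0,l}$ --- whose eigenvalues on $P_\lambda$ are the symmetric functions $s_l(a_1,\dots,a_n)$ evaluated on the locus \eqref{eq:aequivspec}, with leading behaviour governed by \eqref{eq:tRSEigenzMac} --- fixes the overall scale $a$ of \eqref{eq:aequivspec} as the evaluation parameter of the Fock module in the limit.

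The main obstacle will be the second step: proving that the Schiffmann--Vasserot homomorphisms are genuinely compatible across $n$, so that the $\mathfrak{E}$-action on the inverse limit is well defined and no lowering operator degenerates or acquires a pole as $n\to\infty$. Equivalently, one must show that the structure constants of $\mathfrak{A}_n$ in the Macdonald basis stabilize, which is exactly where the bookkeeping of the locus \eqref{eq:aequivspec} and of the $q,\hbar$-dependence in \eqref{eq:commutatorsa} is delicate. Finally, to conclude that the limit is the full Fock module rather than a proper sub- or quotient module, I would invoke irreducibility of $F(a)$ for generic $q$ and $\hbar$.
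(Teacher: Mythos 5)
Your proposal follows essentially the same route as the paper: identify $\varprojlim_n \Lambda_n$ with the ring of symmetric functions via the restriction maps $\rho^{n+1}_n$ setting the last variable to zero, send the power sums $x_k$ of \eqref{eq:PowerSymBasis} to the Heisenberg generators $a_{-k}\vert 0\rangle$ obeying \eqref{eq:commutatorsa}, and read off the evaluation parameter from the Macdonald-operator eigenvalues on the locus \eqref{eq:aequivspec} (the paper likewise defers this last point to Proposition~\ref{Prop:Eigenvalues}). The difference is in which structure each argument actually verifies. The paper's proof checks that the Macdonald inner product $\langle x_\lambda, x_\mu\rangle = \delta_{\lambda,\mu} z_\lambda(q,\hbar)$ agrees with the Fock pairing $\langle 0\vert a_\lambda a_{-\mu}\vert 0\rangle$ --- this is the piece later invoked in Theorem~\ref{Th:EmbeddingTh} to conclude the embedding is an isometry, and your proposal omits it entirely. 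Conversely, you spend your effort on the compatibility of the Schiffmann--Vasserot homomorphisms across $n$ and the stabilization of matrix coefficients of $\mathfrak{e}^k_{i,j},\mathfrak{f}^k_{i,j}$, which the paper does not address at all (it only notes after \eqref{eq:Mnmodule} that explicit formulae for these operators are unavailable, so your ``main obstacle'' is real and is quietly sidestepped in the paper by working with the Heisenberg presentation directly rather than with the full $\mathfrak{E}$-action). Your closing appeal to irreducibility of $F(a)$ for generic $q,\hbar$ to rule out a proper sub- or quotient module is a reasonable extra safeguard that the paper does not state. If you add the inner-product verification, your argument would subsume the paper's and be somewhat more complete on the module-structure side.
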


\begin{proof}
We can map Macdonald polynomials to states in the Fock space representation of the $q,\hbar$-Heisenberg algebra by claiming that
\begin{equation}
x_k = a_{-k}\vert 0\rangle\,,
\label{eq:symmetric}
\end{equation}
where $a_l$ obey \eqref{eq:commutatorsa}.
For a partition of size $k$ and length $n$ we can define $x_\lambda=x_{\lambda_1}\cdots x_{\ell(\lambda)}$ and the correspondingly state in the Fock module $F$ of $\mathfrak{E}$ as $a_{-\lambda_1}\cdots a_{-\ell(\lambda)}$.
Now define a homomorphism $\rho^{n+1}_n: \Lambda_{n+1}\to \Lambda_n$ as
\begin{equation}
(\rho^{n+1}_n f)(\zeta_1,\dots,\zeta_n) = f (\zeta_1,\dots,\zeta_n,0)
\end{equation}
for $f\in\Lambda_{n+1}$. The following projective limit
\begin{equation}
\Lambda := \lim\limits_{\leftarrow n}\Lambda_n
\end{equation}
defines a ring of symmetric functions which is the Fock module of $\mathfrak{E}$, where $\{x_\lambda\}$ form a basis.
Indeed, it can be shown that the inner product on $\Lambda$ defined as 
\begin{equation}
\langle x_\lambda, x_\mu \rangle = \delta_{\lambda,\mu}z_\lambda (q,\hbar)\,,
\end{equation}
where 
\begin{equation}
z_\lambda (q,\hbar)=\prod\limits_{a\geq 1} a^{n_\lambda(a)} n_\lambda(a)! \cdot \prod\limits_{i=1}^{\ell(\lambda)}\frac{1-q^{\lambda_i}}{1-\hbar^{\lambda_i}}\,,
\end{equation}
where $n_\lambda(a)=\#\{i|\lambda_i=a\}$\, which arises from orthogonality of the Macdonald polynomials is in one-to-one correspondence with the following Fock paring
\begin{equation}
\langle 0| a_\lambda a_{-\mu}|0\rangle = \delta_{\lambda,\mu}z_\lambda (q,\hbar)\,.
\end{equation}
The evaluation parameter of the Fock module will be discussed later in Proposition \ref{Prop:Eigenvalues}.
\end{proof}
More details can be found in multiple sources including \cite{2011ntqi.conf..357S,2013arXiv1309.7094S,2013arXiv1301.4912S}.

Finally we can relate the K-theory of Hilbert scheme of points on $\mathbb{C}^2$ with the K-theory of the moduli spaces of quasimaps to $X_n$ \eqref{eq:Kthquasimaps}.

\begin{theorem}
\label{Th:EmbeddingTh}
For $n>k$ there is the following embedding of Hilbert spaces
\begin{align}
\bigoplus\limits_{l=0}^k K_{q,\hbar}(\text{Hilb}^l(\mathbb{C}^2)) &\hookrightarrow \mathcal{H}_n
\label{eq:Embeddingn}\\
[\lambda]&\mapsto \mathsf{V}_{\textbf{q}}\,.\notag
\end{align}
for K-theory vertex function for some fixed point $q$ of maximal torus $T$.
The statement also holds in the limit $n\to\infty$
\begin{equation}
\bigoplus\limits_{l=0}^\infty K_{q,\hbar}(\text{Hilb}^l(\mathbb{C}^2)) \hookrightarrow \mathcal{H}_\infty\,,
\label{eq:Embeddinginf}
\end{equation}
where $\mathcal{H}_\infty$ is defined as a stable limit of $\mathcal{H}_n$ as $n\to\infty$.
\end{theorem}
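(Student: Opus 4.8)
The plan is to identify both sides of \eqref{eq:Embeddingn} with explicit spaces of Macdonald polynomials and then to realize the map $[\lambda]\mapsto\mathsf{V}_{\textbf{q}}$ as the identity on these polynomials. On the Hilbert scheme side, the fixed-point classes $[\mathcal{J}]$ of $(\Complex^\times)^2$ in $K_{q,\hbar}(\text{Hilb}^l(\mathbb{C}^2))$ are indexed by partitions $\lambda$ with $|\lambda|=l$, and under the identification \eqref{eq:KthHilbn} together with the power-symmetric change of variables \eqref{eq:PowerSymBasis} they coincide with the symmetric Macdonald polynomials $P_\lambda$. Hence $\bigoplus_{l=0}^k K_{q,\hbar}(\text{Hilb}^l(\mathbb{C}^2))$ acquires the basis $\{P_\lambda : |\lambda|\leq k\}$. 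On the quiver side, Proposition \ref{Prop:Truncation} asserts that upon specializing the framing parameters to \eqref{eq:aequivspec} the normalized vertex function $\mathsf{V}_{\textbf{q}}$ attached to a unique fixed point $\textbf{q}\in X_n^{\mathsf{T}}$ equals $P_\lambda(\vec{\zeta};q,\hbar)$; this supplies the map in \eqref{eq:Embeddingn}.

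First I would verify that the hypothesis $n>k$ makes the map well-defined: any partition $\lambda$ with $|\lambda|\leq k$ satisfies $\ell(\lambda)\leq|\lambda|\leq k<n$, so every such $P_\lambda$ lies in the highest-weight module $M_n$ of \eqref{eq:Mnmodule} and, by Proposition \ref{Prop:Truncation}, is realized by an honest truncated (polynomial) vertex-function class in $\mathcal{H}_n$. Injectivity is then immediate from the linear independence of the Macdonald polynomials: distinct partitions give distinct basis vectors on the left and distinct vertex classes on the right.

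Next I would upgrade this linear embedding to an embedding of Hilbert spaces by matching the pairings. The equivariant K-theoretic pairing on $K_{q,\hbar}(\text{Hilb}^l(\mathbb{C}^2))$ diagonalizes in the fixed-point basis, and under the power-symmetric identification this is precisely the Macdonald/Fock inner product $\langle x_\lambda, x_\mu\rangle = \delta_{\lambda,\mu}\,z_\lambda(q,\hbar)$ appearing in the proof of Proposition \ref{Prop:FockDI}. Since the same orthogonality governs the image vertex classes, the map preserves the Hilbert-space structure up to the normalization $z_\lambda(q,\hbar)$, which is the required compatibility.

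For the stable statement \eqref{eq:Embeddinginf} I would pass to the limit using Proposition \ref{Prop:FockDI}: the projective limit $\lim_{\leftarrow n} M_n$ is the Fock module $F(a)$ of the elliptic Hall algebra $\mathfrak{E}$, while $\bigoplus_{l=0}^\infty K_{q,\hbar}(\text{Hilb}^l(\mathbb{C}^2))$ is exactly the Fock space on which $\mathfrak{E}$ acts faithfully \cite{Burban:}. After the power-symmetric change of variables the Macdonald polynomials no longer depend explicitly on $n$, so the finite-$n$ embeddings are compatible with the transition maps $\rho^{n+1}_n$ and assemble into a map into $\mathcal{H}_\infty$. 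The main obstacle I anticipate is precisely this last compatibility: one must show that the stable limit $\mathcal{H}_\infty$ of the quasimap K-theories is intertwined with the projective system defining $\Lambda=\lim_{\leftarrow n}\Lambda_n$, so that the single-fixed-point selection $\textbf{q}$ in Proposition \ref{Prop:Truncation} — which depends on the asymptotic chamber $|a_{\mathfrak{S}(1)}|\gg\dots\gg|a_{\mathfrak{S}(n)}|$ — survives the limit and yields the embedding \eqref{eq:Embeddinginf}, which in fact becomes surjective onto the full Fock space.
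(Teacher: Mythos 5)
Your proposal follows essentially the same route as the paper's proof: identify the fixed-point classes $[\lambda]$ with Macdonald polynomials, use Proposition \ref{Prop:Truncation} to realize each $P_\lambda$ as the unique truncating vertex function $\mathsf{V}_{\textbf{q}}$, and invoke Proposition \ref{Prop:FockDI} to see the inner products match, then pass to the projective limit for \eqref{eq:Embeddinginf}. You supply some details the paper leaves implicit (the role of $n>k$ in well-definedness, injectivity, and compatibility with the transition maps $\rho^{n+1}_n$), but the argument is the same.
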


\begin{proof}
The above maps are constructed by assigning Macdonald polynomials corresponding to fixed points $[\lambda]$ to Macdonald polynomials which are obtained from coefficient vertex functions truncated due to condition \eqref{eq:aequivspec}. Since, according to Proposition \ref{Prop:Truncation}, there is only one coefficient function $\mathsf{V}_{\textbf{q}}$ for which the truncation occurs, this defines a map from $[\lambda]\in\bigoplus\limits_{l=0}^k K_{q,\hbar}(\text{Hilb}^l(\mathbb{C}^2))$ to the vertex function for trivial tautological class in $X_n$.
From Proposition \ref{Prop:FockDI} it follows that inner product is preserved under the embeddings \eqref{eq:Embeddingn, eq:Embeddinginf}, thus they represent isometries of the corresponding Hilbert spaces.
\end{proof}

For future reference we shall denote $\mathcal{M}_1=\bigoplus\limits_{l=0}^\infty K_{q,\hbar}(\text{Hilb}^l(\mathbb{C}^2))$ for rank-one sheaves on $\mathbb{P}^2$ with framing at $\mathbb{P}^1_\infty$. In the next section we shall consider moduli spaces $\mathcal{M}_N$ of rank-$N$ sheaves.

\subsection{Matching Spectra of Macdonald Operators}
Equivariant classes of fixed points $[\lambda]$ in $K_{q,\hbar}(Hilb^k)$ are in one-to-one correspondence with skyscraper sheaves at these points $\mathcal{O}/[\mathcal{J}_\lambda]$. Universal quotient sheaf on Hilb$^k\times\mathbb{C}^2$ gives rise to a degree $k$ tautological bundle $\mathscr{V}\vert_{\mathcal{J}_\lambda}$.
\begin{lemma}
The eigenvalue of the operator of multiplication by the universal bundle corresponding to $\mathscr{V}\vert_{\mathcal{J}_\lambda}$ 
\begin{equation}
\mathscr{U} = \mathscr{W}+(1-\hbar)(1-q)\mathscr{V}\vert_{\mathcal{J}_\lambda}\,,
\end{equation}
where $\mathscr{W}$ is a constant bundle of degree 1, in equivariant K-theory $K_{q,\hbar}(\text{Hilb}^n)$ is given by
\begin{equation}
\mathcal{E}_1(\lambda) = a\left(1-(1-\hbar)(1-q)\sum\limits_{(i,j)\in\lambda}^n \sum_{c=1}^k s_c\right)\,,
\label{eq:EUnivBundleEigen}
\end{equation}
where $s_1,\dots s_k$ are in one-to-one correspondence with the content of tableau $\lambda$ of size $k$ corresponding to class $[\mathcal{J}]$ and are given by
\begin{equation}
s_{i,j} = q^{i-1}\hbar^{j-1}\,,
\end{equation}
for $i,j$ ranging through the arm lengths and leg lengths of $\lambda$. In \eqref{eq:EUnivBundleEigen} $a\in\mathbb{C}^\times$ is the character of $T\mathscr{W}$. 
\end{lemma}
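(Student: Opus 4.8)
The plan is to reduce the statement to the equivariant localization theorem for the torus $(\Complex^\times)^2$ acting on $\text{Hilb}^k(\mathbb{C}^2)$. Since the fixed points are isolated and indexed by partitions $\lambda$ of $k$ (as recalled above), their classes $[\mathcal{J}_\lambda]$ furnish a basis of the localized K-theory $K_{q,\hbar}(\text{Hilb}^k)$, and the operator of multiplication (tensor product) by any tautological class is \emph{diagonal} in this basis. Its eigenvalue on $[\mathcal{J}_\lambda]$ is simply the equivariant restriction of the class to the corresponding fixed point, i.e. the torus character of the fiber, because $E\otimes\mathcal{O}_{[\mathcal{J}_\lambda]}=(E|_{[\mathcal{J}_\lambda]})\,\mathcal{O}_{[\mathcal{J}_\lambda]}$ with $E|_{[\mathcal{J}_\lambda]}$ a scalar in localized K-theory. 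It therefore suffices to compute the restriction $\mathscr{U}|_{[\mathcal{J}_\lambda]}$.

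First I would treat the two summands separately. The framing bundle $\mathscr{W}$ is trivial of rank one carrying the character $a$ of $T\mathscr{W}$, so $\mathscr{W}|_{[\mathcal{J}_\lambda]}=a$, which supplies the leading term. For the tautological bundle $\mathscr{V}$, whose fiber over the fixed point is the quotient ring $\mathbb{C}[x,y]/\mathcal{J}_\lambda$, I would use that $\mathcal{J}_\lambda$ is the monomial ideal generated by $\{x^{\lambda_1},x^{\lambda_2}y,\dots,x^{\lambda_m}y^m,y^{m+1}\}$, so a vector-space basis of the quotient is given by the monomials $x^{i-1}y^{j-1}$ indexed by the boxes $(i,j)\in\lambda$. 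Under the action $(q,\hbar)\cdot p(x,y)=p(qx,\hbar y)$ the monomial $x^{i-1}y^{j-1}$ has weight $q^{i-1}\hbar^{j-1}=s_{i,j}$, whence
\begin{equation}
\mathscr{V}|_{[\mathcal{J}_\lambda]}=\sum_{(i,j)\in\lambda}s_{i,j},\qquad s_{i,j}=q^{i-1}\hbar^{j-1}.\notag
\end{equation}
Substituting these two restrictions into $\mathscr{U}=\mathscr{W}+(1-\hbar)(1-q)\mathscr{V}$ then produces $\mathcal{E}_1(\lambda)$.

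The hard part will not be the localization argument, which is routine, but the bookkeeping needed to match conventions exactly. Specifically, I expect the main obstacle to be pinning down the precise normalization so as to reproduce the sign and the overall factor of $a$ in \eqref{eq:EUnivBundleEigen}: this requires fixing which coordinate of $\mathbb{C}^2$ carries the weight $q$ versus $\hbar$, and deciding whether $\mathscr{V}$ or its dual $\mathscr{V}^\ast$ enters the definition of $\mathscr{U}$, since passing to the dual sends each $s_{i,j}\mapsto s_{i,j}^{-1}$ and rescales the whole expression. A secondary point is translating the naive content parameterization $s_{i,j}=q^{i-1}\hbar^{j-1}$ into the arm-length/leg-length description stated in the lemma, which amounts to re-indexing the boxes of $\lambda$ by their hook data. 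Once these conventions are fixed, the identity follows by direct substitution.
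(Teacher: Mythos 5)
Your localization strategy is the right one, and it is surely the intended argument: the paper states this lemma without proof, as it is a standard fact about $K_{q,\hbar}(\text{Hilb}^k)$ (multiplication by a tautological class is diagonal on the fixed-point basis, with eigenvalue the restriction to the fixed point), and your identification of the fiber of $\mathscr{V}$ over $[\mathcal{J}_\lambda]$ with $\mathbb{C}[x,y]/\mathcal{J}_\lambda$, spanned by the monomials $x^{i-1}y^{j-1}$ of weight $s_{i,j}=q^{i-1}\hbar^{j-1}$ for $(i,j)\in\lambda$, is correct.

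However, you stop short of the stated formula: direct substitution as you set it up yields $a+(1-\hbar)(1-q)\sum_{(i,j)\in\lambda}s_{i,j}$, whereas \eqref{eq:EUnivBundleEigen} is $a\bigl(1-(1-\hbar)(1-q)\sum_{(i,j)\in\lambda}s_{i,j}\bigr)$, and you defer both discrepancies to ``fixing conventions'' without fixing them. They can be closed concretely. First, the overall factor of $a$: in the ADHM presentation the space $V$ is generated from $W$ by the ADHM maps, so the tautological bundle at the fixed point carries the framing character, i.e.\ $\mathscr{V}|_{[\mathcal{J}_\lambda]}=a\sum_{(i,j)\in\lambda}q^{i-1}\hbar^{j-1}$ rather than the bare character sum; this is what makes the entire eigenvalue proportional to $a$. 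Second, the sign: the class that restricts to \eqref{eq:EUnivBundleEigen} is $\mathscr{W}-(1-q)(1-\hbar)\mathscr{V}$, the standard universal class obtained from the Koszul resolution of the origin of $\mathbb{C}^2$; the plus sign in the displayed definition of $\mathscr{U}$ is inconsistent with the eigenvalue formula as printed. That the minus sign is the intended one is confirmed by the first line of the proof of Proposition \ref{Prop:Eigenvalues}, where $\mathcal{E}_1(\lambda)=a\bigl(1+(1-\hbar)\sum_{i}(q^{\lambda_i}-1)\hbar^{i-1}\bigr)$ is used, and this agrees with \eqref{eq:EUnivBundleEigen} only after summing $\sum_{i=1}^{\lambda_j}q^{i-1}=(1-q^{\lambda_j})/(1-q)$ along rows, which requires the minus sign. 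With these two points made explicit your argument is complete; without them the proof does not actually reach the asserted expression.
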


Using the above result one can also calculate the eigenvalues $\mathcal{E}_r(\lambda)$ of the operator of multiplication by the $r$-th anti-symmetric power of the universal bundle $\Lambda^r \mathscr{U}$.


We shall now reproduce formula \eqref{eq:EUnivBundleEigen} from the eigenvalues of tRS operators at locus \eqref{eq:aequivspec} as well as make a more detailed connection between \eqref{eq:KthHilbn} and K-theory of quasimaps into $X$ from before.

\begin{proposition}\label{Prop:Eigenvalues}
The eigenvalues of the $n$-particle tRS model $e_r$ in \eqref{eq:tRSEigenz} and the eigenvalues of multiplication by bundle $\Lambda^r \mathscr{U}$ over $\text{Hilb}^n$ \eqref{eq:EUnivBundleEigen} are in one-to-one correspondence. In particular, for $r=1$
\begin{equation}
\mathcal{E}_1(\lambda)=a \left(\hbar^{n}+\hbar^{n-1}(1-\hbar) e_1\right)\,.
\label{eq:Eenergy}
\end{equation}
\end{proposition}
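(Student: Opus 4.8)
The plan is to match the two eigenvalue formulas by computing both sides explicitly at the truncation locus \eqref{eq:aequivspec} and showing they agree term-by-term in the content statistics of $\lambda$. First I would start from the tRS eigenvalue side. By Theorem \ref{Th:tRSeqproof} the normalized vertex functions $\mathsf{V}^{(1)}_{\textbf{p}}$ are eigenfunctions of $T_r(\vec{\zeta})$ with eigenvalue $e_r(\mathbf{a})$, and by Proposition \ref{Prop:Truncation} the distinguished fixed point $\textbf{q}$ produces the Macdonald polynomial $P_\lambda$. Substituting the locus \eqref{eq:aequivspec}, namely $a_i = a\, q^{\lambda_i}\hbar^{i-n}$, directly into $e_1(\mathbf{a}) = \sum_{i=1}^n a_i$ gives
\begin{equation}
e_1(\mathbf{a}) = a\sum_{i=1}^n q^{\lambda_i}\hbar^{i-n}\,,\notag
\end{equation}
which already reproduces the tRS/Macdonald eigenvalue appearing on the right of \eqref{eq:tRSEigenzMac} up to the overall factor $a$. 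The task for $r=1$ is then to rewrite this sum so that it matches the geometric expression \eqref{eq:Eenergy}.

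Next I would carry out the bridge between the two combinatorial presentations of the same eigenvalue. The geometric side, via the Lemma, expresses $\mathcal{E}_1(\lambda)$ through the content sum $\sum_{(i,j)\in\lambda} s_{i,j}$ with $s_{i,j}=q^{i-1}\hbar^{j-1}$, whereas the tRS side gives the column sum $\sum_i q^{\lambda_i}\hbar^{i-n}$. The key algebraic identity I would establish is that
\begin{equation}
(1-q)\sum_{(i,j)\in\lambda} q^{i-1}\hbar^{j-1} = \sum_{i=1}^n \hbar^{i-1}\left(1 - q^{\lambda_i}\right)\,,\notag
\end{equation}
which is a standard telescoping: summing the geometric series $\sum_{i=1}^{\lambda_j}q^{i-1}=(1-q^{\lambda_j})/(1-q)$ column-by-column and regrouping by row index. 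Feeding this into the definition of $\mathscr{U}=\mathscr{W}+(1-\hbar)(1-q)\mathscr{V}\vert_{\mathcal{J}_\lambda}$ and the eigenvalue \eqref{eq:EUnivBundleEigen}, the factor $(1-\hbar)$ telescopes the $\hbar$-sum $\sum_{i=1}^n\hbar^{i-1}(1-q^{\lambda_i})$ against the constant bundle term, and after reindexing $i\mapsto n-i+1$ and collecting powers of $\hbar$ one lands precisely on $a(\hbar^n + \hbar^{n-1}(1-\hbar)e_1)$ with $e_1 = e_1(\mathbf{a})/a$ evaluated at the locus. Verifying that the normalization prefactor $\prod_i \theta(\hbar^{i-n}\zeta_i,q)/\theta(a_i\zeta_i,q)$ in \eqref{eq:DefVvertMac} does not alter the eigenvalue (it is a $\zeta$-independent-spectrum multiplier under the shift $p_k$, contributing only to the eigenfunction, not the eigenvalue) is a point I would check explicitly.

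For general $r$ I would argue that $T_r(\vec{\zeta}) = e_r(Y_1,\dots,Y_n)$ acts on $P_\lambda$ with eigenvalue $e_r(\mathbf{a})$, so evaluating $e_r$ of the locus \eqref{eq:aequivspec} produces the symmetric function $\sum_{|\mathcal{I}|=r}\prod_{i\in\mathcal{I}}a\,q^{\lambda_i}\hbar^{i-n}$; on the geometric side the eigenvalue of $\Lambda^r\mathscr{U}$ is obtained by taking the $r$-th elementary symmetric function of the single-box contributions, and the two agree because both are $e_r$ applied to the same multiset of Chern roots $\{a\,q^{\lambda_i}\hbar^{i-n}\}$. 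I expect the main obstacle to be not the $r=1$ telescoping, which is routine, but rather pinning down the precise dictionary between the arm/leg content parameters $s_{i,j}$ used in the Lemma and the column-index parameterization forced by the locus \eqref{eq:aequivspec}: the Lemma indexes contents by $(i,j)\in\lambda$ running over arms and legs, while the tRS locus is naturally adapted to the transpose (column) reading of $\lambda$, so I would need to fix conventions carefully and confirm that the two content sums coincide as the same rational function of $q$ and $\hbar$ before the term-by-term match can be asserted.
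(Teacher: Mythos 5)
Your proposal is correct and follows essentially the same route as the paper: the paper's proof likewise telescopes the content sum $(1-q)\sum_{(i,j)\in\lambda}q^{i-1}\hbar^{j-1}=\sum_{i}\hbar^{i-1}(1-q^{\lambda_i})$ to rewrite $\mathcal{E}_1(\lambda)=a\bigl(1+(1-\hbar)\sum_i(q^{\lambda_i}-1)\hbar^{i-1}\bigr)$, and then massages $e_1=\sum_i a_i$ at the locus \eqref{eq:aequivspec} by adding and subtracting the geometric series $\sum_i\hbar^{i-1}$ to match. Your additional remarks on the general-$r$ case and on the convention between arm/leg content and column indexing go slightly beyond what the paper writes out (it defers higher $r$ to a reference), but they do not change the argument.
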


\begin{proof}
We get 
\begin{equation}
\mathcal{E}_1(\lambda)=a\left(1+(1-\hbar)\sum_{i=1}^k (q^{\lambda_i}-1)\hbar^{i-1}\right)\,.
\end{equation}
The eigenvalue of the first tRS Hamiltonian reads
\begin{align}
a^{-1}e_1 &= \sum\limits_{i=1}^n q^{\lambda_i}\hbar^{i-n} = \hbar^{1-n}\sum\limits_{i=1}^n q^{\lambda_i}\hbar^{1-i}+\hbar^{1-n}\sum\limits_{i=1}^n \hbar^{1-i}-\hbar^{1-n}\sum\limits_{i=1}^n \hbar^{i-1} \cr 
&= \hbar^{1-n}\sum\limits_{i=1}^n (q^{\lambda_i}-1)\hbar^{i-1}+\hbar^{1-n}\frac{1-\hbar^{n}}{1-\hbar}\,.
\end{align}
Combining the last two formulae we get \eqref{eq:Eenergy}.
\end{proof}

Analogously we can prove the correspondence between the eigenvalues of higher Hamiltonians $e_r$ and $\mathcal{E}_r(\lambda)$ (see \cite{Feigin:2009ab} for details). 

Since Proposition \ref{Prop:Eigenvalues} works for any finite $n$ we can study the limit $n\to\infty$. The following statement follows directly from the above formulae
\begin{proposition}\label{Prop:StableLimitEigen}
Assuming that $|\hbar|<1$ we have
\begin{equation}
\mathcal{E}_1(\lambda)= \lim\limits_{n\to\infty}\left(\hbar^{n-1}(1-\hbar) e_1\right)\,.
\label{eq:EenergyLim}
\end{equation}
\end{proposition}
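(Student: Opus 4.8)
The plan is to establish \eqref{eq:EenergyLim} as a direct consequence of the exact identity \eqref{eq:Eenergy} obtained in Proposition \ref{Prop:Eigenvalues}, by carefully tracking which terms survive and which are suppressed in the limit $n\to\infty$ under the assumption $|\hbar|<1$. First I would start from the closed form
\begin{equation}
\mathcal{E}_1(\lambda)=a\left(\hbar^n+\hbar^{n-1}(1-\hbar)e_1\right)\,,\notag
\end{equation}
which holds for every finite $n$ and is therefore not an asymptotic statement but an equality. The claim \eqref{eq:EenergyLim} asserts that the first summand $a\hbar^n$ becomes negligible relative to the second, so that $\mathcal{E}_1(\lambda)$ is recovered entirely from $\lim_{n\to\infty}\hbar^{n-1}(1-\hbar)e_1$.

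The key step is to analyze the product $\hbar^{n-1}(1-\hbar)e_1$ using the explicit expression for $e_1$ derived in the proof of Proposition \ref{Prop:Eigenvalues}, namely $a^{-1}e_1=\sum_{i=1}^n q^{\lambda_i}\hbar^{i-n}$. Multiplying through by $\hbar^{n-1}(1-\hbar)$ gives $\hbar^{n-1}(1-\hbar)\cdot a\sum_{i=1}^n q^{\lambda_i}\hbar^{i-n}=a(1-\hbar)\sum_{i=1}^n q^{\lambda_i}\hbar^{i-1}$, which is manifestly $n$-independent once the partition $\lambda$ is fixed and all but finitely many $\lambda_i$ vanish. Since $|\hbar|<1$, the remaining finite-support sum converges and the $\hbar^n$ term in the exact identity tends to zero, so the limit on the right of \eqref{eq:EenergyLim} reproduces precisely $a\left(1+(1-\hbar)\sum_{i=1}^k(q^{\lambda_i}-1)\hbar^{i-1}\right)=\mathcal{E}_1(\lambda)$, matching the first display in the proof of Proposition \ref{Prop:Eigenvalues}. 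I would also note that the constant $a$ itself is absorbed correctly because the $\hbar^{1-n}\frac{1-\hbar^n}{1-\hbar}$ contribution in $e_1$ cancels the bare $\hbar^n$ term upon multiplication by $\hbar^{n-1}(1-\hbar)$.

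The main obstacle is purely one of justifying the interchange of the limit with the finite sum and confirming that the truncation $\lambda_i=0$ for $i>\ell(\lambda)$ makes the tail $\sum_{i>\ell(\lambda)}\hbar^{i-1}$ genuinely convergent rather than merely formally finite; this is where $|\hbar|<1$ is essential, since otherwise $\hbar^{i-1}$ would not decay and the $n\to\infty$ limit of $e_1$ would diverge. Once convergence is secured, the identity is immediate and requires no further geometric input. I would close by remarking that the same bookkeeping, applied to the eigenvalues $\mathcal{E}_r(\lambda)$ of $\Lambda^r\mathscr{U}$, yields the analogous stable-limit formulae, consistent with the one-to-one correspondence of Proposition \ref{Prop:Eigenvalues} and with \cite{Feigin:2009ab}.
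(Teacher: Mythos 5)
Your argument is correct and follows exactly the route the paper intends: the paper gives no separate proof, stating only that the proposition ``follows directly from the above formulae,'' i.e.\ from \eqref{eq:Eenergy} together with the vanishing of $a\hbar^{n}$ and the convergence of the geometric tail when $|\hbar|<1$, which is precisely your computation. The only wording to repair is the claim that $a(1-\hbar)\sum_{i=1}^{n}q^{\lambda_i}\hbar^{i-1}$ is ``manifestly $n$-independent'' --- it is not, since the columns with $\lambda_i=0$ still contribute $\hbar^{i-1}$ --- but you correct this yourself when you analyze the tail $\sum_{i>\ell(\lambda)}\hbar^{i-1}$ in the final paragraph.
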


As $e_r$ are eigenvalues of Macdonald operators which are used in the the construction of modules $M_n$ of $\mathfrak{A}_n$ we conclude that character $a$ plays a role of the evaluation parameter in the Fock module of $\mathfrak{E}$ in Proposition \ref{Prop:FockDI}.

\vspace{2mm}
\paragraph{\textbf{Example.}}
Let us illustrate Proposition \ref{Prop:StableLimitEigen} for $k=2$. There are two possible Young tableaux with two boxes. Assuming $a=1$ we get
\begin{itemize}
\item $\lambda = {\tiny\yng(2)}$. For some large $n>2$ we view ${\tiny\yng(2)}$ as tableau with $n$ columns: $(1,1,0,\dots, 0)$. Then the eigenvalue for tRS operator $T_1$ is given by the sum of equivariant parameters at locus \eqref{eq:aequivspec}
\begin{equation}
e_1=q^2 \hbar^{1-n}+\hbar^{2-n}+\dots+\hbar+1\,.
\end{equation}
Using \eqref{eq:EenergyLim} and talking $n\to\infty$ we get $\mathcal{E}_1 = q^2+\hbar-q\hbar$.

\item $\lambda = {\tiny\yng(1,1)}$. Analogously we have tableau with $n$ columns most of which are zeroes $(2,0,\dots, 0)$. tRS eigenvalue $e_1=q \hbar^{1-n}+q\hbar^{2-n}+\hbar^{3-n}+\dots+1$ and $\mathcal{E}_1 = q+\hbar-q \hbar^2$.
\end{itemize}

\vspace{2mm}
\paragraph{\textbf{Remark.}}
We have proved that the eigenvalue of the multiplication operator by the $r$-th power of the universal bundle over Hilb$^k$ can be evaluated as a stable limit of the eigenvalue of Macdonald operator $T_r$ \eqref{eq:tRSRelationsEl}. This statement illustrates recently demonstrated correspondence between spectrum of the tRS model with large number of particles and hydrodynamical Benjamin-Ono model \cite{Koroteev:2018a}.

\subsection{The Correspondence}
We can summarize the correspondence between geometry of quiver varieties and Hilbert schemes of points in the following table where we already assume the stable limit in $n$.

\begin{table}[!h]
\begin{center}
\renewcommand\arraystretch{1.2}
\begin{tabular}{|c|c|}
\hline
$K_T(\textbf{QM}(\mathbb{P}^1,X))$ & $K_{q,\hbar}(\text{Hilb}(\mathbb{C}^2))$ \\ 
\hline\hline K\"ahler/quantum parameters of $X$ $z_1,z_2\dots$  & Ring generators $x_1,x_2,\dots$ \\ 
\hline Vertex function $\mathsf{V}_{\textbf{q}}$ at locus \eqref{eq:aequivspec} & Classes of $(\mathbb{C}^\times)^2$ fixed points $[\mathcal{J}]$ \\
\hline $\mathbb{C}^\times_q$ acting on base curve & $\mathbb{C}^\times_q$ acting on $\mathbb{C}\subset\mathbb{C}^2$ \\
\hline $\mathbb{C}^\times_\hbar$ acting on cotangent fibers of $X$ & $\mathbb{C}^\times_\hbar$ acting on another $\mathbb{C}\subset\mathbb{C}^2$ \\
\hline Eigenvalues $e_r$ of tRS operators $T_r$ &  Chern polynomials $\mathcal{E}_r$ of $\Lambda^r\mathcal{U}$ \\
\hline
\end{tabular}
\vspace{2mm}
\renewcommand\arraystretch{1}
\caption{The correspondence between K-theories of quasimaps to $X_n$ and Hilb.}
\label{Tab:Corresp}
\end{center}
\end{table}

\vspace{2mm}

Another way to formulate the duality is the following. The \textit{quantum} K-theory of $X$ (which is by construction $K_T(\textbf{QM}(\mathbb{P}^1,X))$) is related to \textit{classical} equivariant K-theory of the Hilbert scheme of  points in $\mathbb{C}^2$. A natural question arises: what needs to be modified in the left column of the above table to obtain \textit{quantum} version of the space on the right? The answer will be formulated later in \secref{Sec:QuantumMult}

\subsection{Remarks}
Notice that in the left column of \tabref{Tab:Corresp} equivariant parameters $q$ and $\hbar$ play completely different roles -- the former scales the base curve, while the latter corresponds to $\mathbb{C}^\times$ action on the cotangent directions of $X_n$. In the right column they are on completely equal footing and can be interchanged.

The above duality works only when equivariant parameters $a_1,a_2,\dots, a_n$ in $X_n$ obey \eqref{eq:aequivspec}. It was discussed in the literature on integrable systems and gauge theories \cite{Nekrasov:2009rc} and more recently in \cite{Sciarappa:2016} that locus \eqref{eq:aequivspec} should be interpreted as a set of quantization conditions which allow for discrete spectrum of the tRS model. Thus, due to the above duality, we can observe a symmetry in the tRS spectrum which interchanges (exponential of) Planck constant $q$ with coupling constant $\hbar$ of the tRS model.

\section{Moduli Space of Sheaves of Rank $N$ on $\mathbb{C}^2$}\label{Sec:RankNSheaves}
Let $\mathcal{M}_{N}$ denote the moduli space of rank $N$ of torsion free sheaves $\mathcal{F}$ of rank $N$ on $\mathbb{P}^2$ with framing at infinity: $\phi: \mathcal{F}|_\infty \simeq \mathcal{O}^{\oplus N}_\infty$. The framing condition forces the first Chern class to vanish, however, the second Chern class can range over the non-positive integers $c_2(\mathcal{F})=-k\cdot[\text{pt}]$. Therefore the moduli space can be represented as a direct sum of disconnected components of all degrees 
\begin{equation}
\mathcal{M}_{N} = \bigsqcup_k \mathcal{M}_{N,k}\,.
\end{equation}
More details can be found in multiple sources, i.e \cite{Feigin:2009aa, Negut:2012aa}.

There is an action of maximal torus $T_N:=\mathbb{C}_q^\times\times\mathbb{C}_\hbar^\times\times\mathbb{T}(GL(N))$ on each component $\mathcal{M}_{N,k}$. The first two $\mathbb{C}^\times$ factors act on $\mathbb{P}^2$, while the rest acts on framing $\phi$ with equivariant parameters $\mathrm{a}_1,\dots \mathrm{a}_N$. We shall be studying $T_N$-equivariant K-theory of $\mathcal{M}_{N,k}$ which is formed by virtual equivariant vector bundles on $\mathcal{M}_{N,k}$. The space $K_{q,\hbar}(\mathcal{M}_{N,k})$ is a module over $\mathbb{C}[q^{\pm 1},\hbar^{\pm 1},\mathrm{a}_1^{\pm 1},\dots \mathrm{a}_N^{\pm 1}]$. 

$T_N$-fixed points are isolated and indexed by $N$-tuples of partitions $\vec{\lambda}=\{\lambda_1,\dots,\lambda_N\}$ with the sum of sizes of all partitions $\sum_{l=1}^N |\lambda_l| = k$
\begin{equation}
\mathcal{F}_{\vec{\lambda}} = \mathcal{J}_{\lambda_1}\oplus\cdots\oplus\mathcal{J}_{\lambda_N}\,.
\label{eq:SkysraperSheaf}
\end{equation}
Here $\mathcal{J}_{\lambda_l}$ are monoidal ideals in $\mathbb{C}[x,y]$ same as in Hilb$^k$ \eqref{eq:idealHilbdim}.
Analogously to Hilb$^k$, equivariant classes $[\vec{\lambda}]$, which can be thought of skyscraper sheaves of $\mathcal{F}_{\vec{\lambda}}$, form a basis in localized K-theory of $\mathcal{M}_{N,k}$.

The space $\mathcal{M}_{N}$ can also be described as the moduli spaces of stable representations of ADHM quiver.
For $N=1$ this quiver variety describes Hilbert schemes of $k$ points on $\mathbb{C}^2$. The two descriptions in terms of sheaves and the ADHM are equivalent \cite{Nakajima:2014aa}.

\subsection{Asymptotic partitions}
We shall modify the techniques of the previous section which studied $\text{Hilb}^k$ to study K-theory of $\mathcal{M}_N$. The main idea was developed in physics paper by the author and A. Sciarappa \cite{Koroteev:2016}, here we shall make all the statements mathematically precise.

The starting point is the moduli space of quasimaps to cotangent bundle to complete flag variety in $\mathbb{C}^{nN}$
\begin{equation}
\mathcal{H}_{nN}=K_{T_N}(\textbf{QM}(\mathbb{P}^1,X_{nN}))\,,
\label{eq:KthquasimapsnN}
\end{equation}
where parameter $n$ will later be sent to infinity. The main idea is to study the above space at a locus similar to \eqref{eq:aequivspec} with the following modification.  

An element in $K_{q,\hbar}(\mathcal{M}_{N,k})$ is parameterized by a tuple of tableaux $\{\lambda_1,\dots,\lambda_N\}$ such that the total number of boxes $\sum_{l=1}^N |\lambda_l|=k$. Let $a_1,\dots,a_{nN}$ be equivariant parameters of the maximal torus of $GL(nN)$ acting on the framing node $\textbf{w}_{nN}$ of the quiver.
Then the sought connection between $\mathcal{H}_{nN}$ and the moduli space of sheaves of rank $N$ is achieved
by, first, splitting the equivariant parameters into $N$ groups $\{a_i^{(l)}\},\,i=1,\dots,n,\,l=1,\dots, N$ and, second, setting their values to the following
\begin{equation}
\frac{a^{(l)}_{i+1}}{a^{(l)}_{i}} = q^{\ell^{(l)}_i}\, \hbar^{i-n}\,, \quad \ell_i^{(l)} =\lambda^{(l)}_i-\lambda^{(l)}_{i-1}\,,
\label{eq:LocusN}
\end{equation}
for some generic $\mathbb{C}^\times$-valued constants $\mathrm{a}_l$.

Equivalently 
\begin{equation}
a_\alpha = \widetilde{\mathrm{a}_\alpha}\, q^{\Lambda_\alpha}\, \hbar^{\alpha-nN}\,,\qquad \alpha = 1,\dots, nN\,,
\end{equation}
where  
\begin{equation}
\{\widetilde{\mathrm{a}_\alpha}\} = \{\mathrm{a}_1,\dots \mathrm{a}_1,\mathrm{a}_2,\dots,\mathrm{a}_2,\dots,\mathrm{a}_N,\dots,\mathrm{a}_N\}
\end{equation}
and Young tableau $\Lambda$ is shown in \figref{Fig:AsymptPart} and is constructed as follows \cite{Schiffmann:2009}\footnote{Schiffmann and Vasserot used this asymptotic partition to show that there is a triangular decomposition in Heisenberg algebras inside $\mathfrak{E}$.
(see Proposition 4.8. in \textit{loc. cit.})}. We define
\begin{equation}
\Lambda=\lambda_1\circledast\ldots\circledast\lambda_N
\end{equation}
by placing each of tableaux $\lambda_l,\, l=1,\dots, N$ into corners of a `staircase' tableau with step size equal to $n$. We always assume that $n$ is parametrically larger than $k$, so the number of columns as well as heights of columns of any $\lambda_l$ is always smaller than $n$.
\begin{figure}[!h]
\centerline{
\begin{picture}(200,200)
\put(20,170){{$\lambda_1$}}
\put(75,115){{$\lambda_2$}}
\put(185,17){{$\lambda_N$}}
\put(57,157){{\small{$(n,(N-1)n)$}}}
\put(112,102){{\small{$(2n,(N-2)n)$}}}
\put(167,57){{\small{$((N-1)n,n)$}}}
\multiput(0,155)(5,0){5}{\line(1,0){3}}
\put(0,0){\line(0,1){180}}
\put(0,0){\line(1,0){200}}
\put(0,180){\line(1,0){7}}
\put(7,180){\line(0,-1){5}}
\put(7,175){\line(1,0){3}}
\put(10,175){\line(0,-1){5}}
\put(10,170){\line(1,0){5}}
\put(15,170){\line(0,-1){10}}
\put(15,160){\line(1,0){10}}
\put(25,160){\line(0,-1){5}}
\put(25,155){\line(1,0){30}}
\multiput(55,100)(0,5){4}{\line(0,1){3}}
\multiput(55,100)(5,0){5}{\line(1,0){3}}
\put(55,155){\line(0,-1){35}}
\put(55,120){\line(1,0){5}}
\put(60,120){\line(0,-1){5}}
\put(60,115){\line(1,0){5}}
\put(65,115){\line(0,-1){5}}
\put(65,110){\line(1,0){10}}
\put(75,110){\line(0,-1){5}}
\multiput(165,0)(0,5){4}{\line(0,1){3}}
\put(75,105){\line(1,0){5}}
\put(80,105){\line(0,-1){5}}
\put(80,100){\line(1,0){30}}
\multiput(112,78)(5,-5){5}{\circle*{1}}
\put(110,100){\line(0,-1){20}}
\put(135,55){\line(1,0){30}}
\put(165,55){\line(0,-1){35}}
\put(165,20){\line(1,0){5}}
\put(170,20){\line(0,-1){5}}
\put(170,15){\line(1,0){10}}
\put(180,15){\line(0,-1){5}}
\put(180,10){\line(1,0){5}}
\put(185,10){\line(0,-1){5}}
\put(185,5){\line(1,0){15}}
\put(200,5){\line(0,-1){5}}
\end{picture}}
\caption{The asymptotic partition $\Lambda$}
\label{Fig:AsymptPart}
\end{figure}
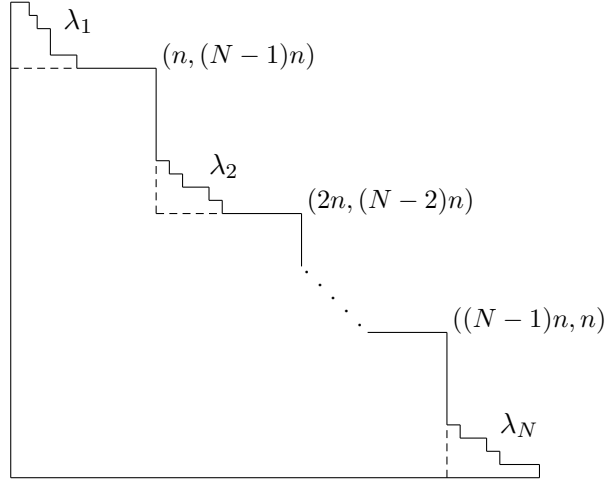

Asymptotic partitions $\Lambda$ describe equivariant K-theory classes of fixed points and, similarly to the previous section, provide a basis in the K-theory of $\mathcal{M}_{N,k}$.

\subsection{Highest weight modules of $\mathfrak{A}_n$ and Tensor Product of Fock Modules of $\mathfrak{E}$}
Consider highest weight modules of $\mathfrak{A}_{nN}$ which are parameterized by generalized Macdonald polynomials
\begin{equation}
\label{eq:GeneralizedMacs}
M_{nM}=\left\{P_{\Lambda}(\boldsymbol{\zeta}_1,\dots,\boldsymbol{\zeta}_N | q,\hbar)\,\Big| \ell(\lambda_l)< n,\,l=1,\dots,N\right\}\,,
\end{equation}
which depend on $N$-tuple of variables $\boldsymbol{\zeta}_l = \zeta^{(l)}_1,\dots,\zeta^{(l)}_{\ell(\lambda_l)},\,l=1,\dots, N$. These polynomials simultaneously obey $N$ decoupled tRS difference relations
\begin{equation}
T^{(l)}_r P_{\Lambda} = s_r(a^{(l)}_1,\dots,a^{(l)}_n) P_{\Lambda}\,,\qquad l=1,\dots, N\,.
\end{equation}
Therefore module $M_{nN}$ decomposes in the tensor product of $N$ modules \eqref{eq:Mnmodule}
\begin{equation}
M_{nN} = M_{n}\otimes\cdots\otimes M_{n}\,.
\end{equation}
Combining this observation with Proposition \ref{Prop:FockDI} we arrive to
\begin{proposition}\label{Prop:StabLimTen}
Let $M_{nN}$ be a highest weight module of $\mathfrak{A}_{nN}$. Then its projective limit $n\to\infty$ is isomorphic to the tensor product of $N$ modules with evaluation parameters $\mathrm{a}_1,\dots,\mathrm{a}_N$
\begin{equation}
\mathrm{T}_N= F(\mathrm{a}_1)\otimes\cdots\otimes F(\mathrm{a}_N)
\label{eq:TensorFockModule}
\end{equation}
of $\mathfrak{E}$.
\end{proposition}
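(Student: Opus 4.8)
The plan is to bootstrap from the single-tower statement of Proposition \ref{Prop:FockDI} by first making the finite-$n$ tensor factorization precise and then commuting the projective limit with the (finite) tensor product. First I would spell out the decomposition $M_{nN}\cong M_n\otimes\cdots\otimes M_n$ already indicated before the statement: at the locus \eqref{eq:LocusN} the staircase shape of $\Lambda$ with step size $n$ separates the $nN$ equivariant parameters into $N$ disjoint blocks, so that the $N$ commuting families of tRS operators $T^{(l)}_r$ act each on its own variable group $\vec{\zeta}_l$ with its own evaluation constant $\mathrm{a}_l$. Because these families commute and, for generic $q,\hbar$ and generic $\mathrm{a}_l$, their joint eigenvalues $s_r(a^{(l)}_1,\dots,a^{(l)}_n)$ separate the individual partitions $\lambda_l$, the common eigenbasis of generalized Macdonald polynomials \eqref{eq:GeneralizedMacs} factorizes into a tensor product of single-tower Macdonald bases \eqref{eq:Mnmodule}. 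This yields the claimed isomorphism at the level of the commuting (Cartan) part of $\mathfrak{A}_{nN}$.

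Next I would apply Proposition \ref{Prop:FockDI} to each factor: each $M_n$ carries the evaluation parameter $\mathrm{a}_l$ supplied by the block assignment $\{\widetilde{\mathrm{a}_\alpha}\}$, and its projective limit is the Fock module $F(\mathrm{a}_l)$. To pass from $\lim_{\leftarrow n}(M_n\otimes\cdots\otimes M_n)$ to $\bigotimes_{l=1}^N F(\mathrm{a}_l)$ I would verify that the restriction maps $\rho^{n+1}_n$ respect the block structure: since $\rho$ acts by independently setting the last variable of each group $\vec{\zeta}_l$ to zero, the inverse system factorizes over the $N$ blocks and the projective limit of the finite tensor product equals the tensor product of the projective limits. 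This identifies the underlying space of $\mathrm{T}_N$ with \eqref{eq:TensorFockModule}.

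The hard part will be matching the full $\mathfrak{E}$-action, not merely its commuting subalgebra. At finite $n$ the off-diagonal generators of $\mathfrak{A}_{nN}$ mix the $N$ blocks, so the factorization above is a priori only a statement about the Cartan part and about the underlying vector space. I would argue that in the stable limit the surjection $\mathfrak{E}\to\mathfrak{A}_{nN}$ of Theorem \ref{Prop:SchVassHom} becomes compatible with the coproduct of $\mathfrak{E}$: the raising and lowering generators of the horizontal and vertical Heisenberg subalgebras act on $\bigotimes_l F(\mathrm{a}_l)$ through the iterated coproduct $\Delta^{(N)}$, becoming block-triangular with respect to the staircase filtration. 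This is precisely the triangular decomposition of Heisenberg subalgebras inside $\mathfrak{E}$ established by Schiffmann and Vasserot (their Proposition 4.8), which is exactly why the asymptotic partition $\Lambda$ was chosen with step size $n$. The $\hbar^{i-n}$ weighting in \eqref{eq:LocusN} and \eqref{eq:aequivspec} suppresses the mixing contributions as $n\to\infty$, so that the limiting action coincides with the coproduct action on the tensor product of Fock modules, completing the identification with $\mathrm{T}_N$.
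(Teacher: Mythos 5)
Your proposal is correct and follows essentially the same route as the paper: the paper's own argument is precisely the observation that the generalized Macdonald polynomials obey $N$ decoupled tRS relations, so $M_{nN}\cong M_n\otimes\cdots\otimes M_n$, followed by an application of Proposition \ref{Prop:FockDI} to each factor. Your third paragraph, addressing compatibility of the full $\mathfrak{E}$-action (not just the Cartan part) with the tensor decomposition via the Schiffmann--Vasserot triangular decomposition, supplies a point the paper leaves implicit in its footnote but does not argue.
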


Thus states in the highest weight module $M_{nN}$ can be matched to vectors in the tensor Fock module and the generalized Macdonald polynomials are mapped onto ideals \eqref{eq:SkysraperSheaf} in the K-ring of $\mathcal{M}_{N}$.

\subsection{Matching Spectra of Macdonald Operators}
Analogously with the Hilbert scheme on $\mathbb{C}^2$ we can study K-theory of $\mathcal{M}_{N,k}$, which is generated by the classes of fixed points $[\vec{\lambda}]$ of maximal torus $T_N$. 

\begin{lemma}
The eigenvalues of the operator of multiplication by the universal bundle 
\begin{equation}
\mathscr{U}=\mathscr{W}+(1-q)(1-\hbar)\mathscr{V}\vert_{\mathcal{J}_{\vec{\lambda}}}
\end{equation}
over $\mathcal{M}_{N,k}$, where $\mathscr{W}$ is a constant bundle of degree $N$ and tautological bundle $\mathscr{V}\vert_{\mathcal{J}_{\vec{\lambda}}}$ arise from the universal quotient sheaf on $\mathcal{M}_{N,k}\times\mathbb{C}^2$ in K-theory of $\mathcal{M}_{N,k}$ is given by 
\begin{align}
\mathcal{E}_1(\Lambda) &=  \sum_{c=1}^N \mathrm{a}_a-(1-\hbar)(1-q)\sum\limits_{l=1}^N \sum\limits_{(i,j)\in\lambda^{(l)}}\sum_{d=1}^{k_c} s^{(l)}_d
\cr
&=\sum_{c=1}^N \mathrm{a}_a-(1-\hbar)(1-q)\, \sum\limits_{(i,j)\in\Lambda}\sum_{c=1}^k \mathrm{s}^{(l)}_c\,,
\label{eq:EUnivBundleEigenN}
\end{align}
where $\sum_{l=1}^N |\lambda_l|=\sum_{l=1}^N k_l=k$ and 
$s^{(l)}_1,\dots,s^{(l)}_{k_l}$ are in one-to-one correspondence with the content of tableaux $\lambda^{(l)}$ of size $k_l$ \begin{equation}
s^{(l)}_{i,j} = q^{i-1}\hbar^{j-1}\,,
\end{equation}
for $i,j$ ranging through the arm length and leg lengths of tableaux $\lambda^{(l)}$. Variables $\mathrm{s}^{(l)}_1,\dots,\mathrm{s}^{(l)}_{k}$ are in one-to-one correspondence with the content of the asymptotic partition $\Lambda$, where $\mathrm{s}_{i,j}$ ranges over the content of $\Lambda$. In \eqref{eq:EUnivBundleEigenN} $e_1(\mathrm{a}_1,\dots,\mathrm{a}_N)$ are characters of $T\mathscr{W}$.
\end{lemma}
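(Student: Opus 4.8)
The plan is to compute $\mathcal{E}_1(\Lambda)$ by equivariant localization at the isolated $T_N$-fixed points, reducing the rank-$N$ statement to the rank-one computation already carried out for $\text{Hilb}^k$, applied blockwise. Since multiplication by $\mathscr{U}$ is diagonal in the fixed-point basis $\{[\vec\lambda]\}$ of the localized K-theory, its eigenvalue at the point labelled by $\vec\lambda=\{\lambda_1,\dots,\lambda_N\}$ is simply the $T_N$-character of the fibre $\mathscr{U}\vert_{\vec\lambda}$. I would therefore first record the two ingredients of $\mathscr{U}=\mathscr{W}+(1-q)(1-\hbar)\mathscr{V}\vert_{\mathcal{J}_{\vec\lambda}}$ separately.

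First, $\mathscr{W}=\mathbb{C}^N$ is the trivial framing bundle on which $\mathbb{T}(GL(N))$ acts with weights $\mathrm{a}_1,\dots,\mathrm{a}_N$, so its character is $\sum_{c=1}^N \mathrm{a}_c=e_1(\mathrm{a}_1,\dots,\mathrm{a}_N)$, independently of $\vec\lambda$. Second, at the fixed point the sheaf splits as $\mathcal{F}_{\vec\lambda}=\bigoplus_{l=1}^N \mathcal{J}_{\lambda_l}$ as in \eqref{eq:SkysraperSheaf}, and correspondingly the fibre of the tautological bundle is the direct sum $\mathscr{V}\vert_{\mathcal{J}_{\vec\lambda}}=\bigoplus_{l=1}^N \mathbb{C}[x,y]/\mathcal{J}_{\lambda_l}$, with the $l$-th summand carrying the framing weight $\mathrm{a}_l$. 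The standard Nakajima weight-space decomposition of the quotient ring gives the $T_N$-character of the $l$-th summand as $\mathrm{a}_l\sum_{(i,j)\in\lambda^{(l)}} q^{i-1}\hbar^{j-1}=\mathrm{a}_l\sum_{(i,j)\in\lambda^{(l)}} s^{(l)}_{i,j}$, exactly the rank-one content sum twisted by $\mathrm{a}_l$. Summing over $l$ and substituting into the linear expression for $\mathscr{U}$ yields the first line of \eqref{eq:EUnivBundleEigenN}.

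For the second line I would repackage the $N$ separate content sums into a single sum over the boxes of the asymptotic partition $\Lambda=\lambda_1\circledast\cdots\circledast\lambda_N$ of \figref{Fig:AsymptPart}. Placing each $\lambda_l$ at its corner of the staircase and invoking the locus \eqref{eq:LocusN} (equivalently $a_\alpha=\widetilde{\mathrm{a}_\alpha}\,q^{\Lambda_\alpha}\hbar^{\alpha-nN}$) identifies the twisted content $\mathrm{a}_l\,s^{(l)}_{i,j}$ of a box in the $l$-th block with the content $\mathrm{s}_c$ of the corresponding box of $\Lambda$: the factor $\mathrm{a}_l$ plays the role of the base point of the $l$-th block, while the relative shift $q^{i-1}\hbar^{j-1}$ records its position within the block. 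Under this identification the double sum $\sum_{l}\sum_{(i,j)\in\lambda^{(l)}}$ collapses to $\sum_{(i,j)\in\Lambda}$, giving the second line.

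The only genuine content beyond the $N=1$ case is the bookkeeping in this last step: one must check that the step size $n$ of the staircase, taken parametrically larger than $k$, keeps the $N$ blocks non-overlapping, so that the contents $\mathrm{s}_c$ of $\Lambda$ factor cleanly as a framing parameter times an intrinsic content and no box of one block is confused with a box of another. Because $n\gg k$ bounds both the number of columns and the heights of every $\lambda_l$ by $n$, this separation is automatic, and the rank-one character computation transfers verbatim to each block. The generalization to $\Lambda^r\mathscr{U}$ then follows, as in the rank-one case, by taking the $r$-th elementary symmetric function of the individual box weights.
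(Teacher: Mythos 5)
The paper states this lemma without proof (it proceeds directly to ``Using the above result we can compute eigenvalues\dots''), so there is no argument of record to compare yours against. Your fixed-point localization computation is the standard one and is the argument the paper implicitly relies on: the proof of Proposition~\ref{Prop:EigenvaluesN} starts from exactly the blockwise twisted content sum $\sum_{l}\mathrm{a}_l\sum_{a}(q^{\lambda^{(l)}_a}-1)\hbar^{a-1}$ that your character count yields after expanding $q^{\lambda_a}-1=(q-1)\sum_{i=1}^{\lambda_a}q^{i-1}$. So the approach is right and fills a gap the paper leaves open.

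Two points need more care than you give them. First, the sign: substituting your characters into $\mathscr{U}=\mathscr{W}+(1-q)(1-\hbar)\mathscr{V}$ as literally written gives $\sum_l\mathrm{a}_l+(1-q)(1-\hbar)\sum_l\mathrm{a}_l\sum_{(i,j)\in\lambda^{(l)}}s^{(l)}_{i,j}$, with a plus, whereas \eqref{eq:EUnivBundleEigenN} carries a minus; the minus is the version consistent with Proposition~\ref{Prop:EigenvaluesN}, and it comes from the standard fixed-point character $\mathscr{W}-(1-q)(1-\hbar)\mathscr{V}$ of the universal sheaf. As written, your argument proves the ``$+$'' statement while asserting the ``$-$'' one; you should fix the sign in the definition of $\mathscr{U}$ (or flag the typo) rather than silently claim the match. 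Second, the passage to the asymptotic partition: the literal content $q^{i-1+(l-1)n}\hbar^{j-1+(N-l)n}$ of a box of $\lambda^{(l)}$ placed at the $l$-th corner of the staircase equals $\mathrm{a}_l\,s^{(l)}_{i,j}$ only under the identification of $\mathrm{a}_l$ with the weight of that corner, and the staircase boxes of $\Lambda$ belonging to no $\lambda^{(l)}$ contribute to $\sum_{(i,j)\in\Lambda}$ but not to your blockwise sum. You acknowledge the first point but not the second; the second line of \eqref{eq:EUnivBundleEigenN} should therefore be presented as a definition of the symbols $\mathrm{s}_c$ (twisted contents of the corner blocks only) rather than as a consequence of the first line.
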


Using the above result we can compute eigenvalues $\mathcal{E}_r(\Lambda)$ of the operator of multiplication by the $r$-th skew-power of the universal bundle $\Lambda^r\mathscr{U}$.


\vspace{2mm}

Now we can compare these eigenvalues with the tRS eigenvalues.

\begin{proposition}
\label{Prop:EigenvaluesN}
The eigenvalues of the $nN$-particle tRS model $e_r$ and the eigenvalues of multiplication by $r$-th skew symmetric power of the universal bundle $\Lambda^r\mathscr{U}$ over $\mathcal{M}_{N,k}$ are in one-to-one correspondence. In particular
\begin{equation}
\mathcal{E}_1(\Lambda)= \hbar^{n}\sum_{l=1}^N\mathrm{a}_l+\hbar^{n-1}(1-\hbar) e_1\,.
\label{eq:EenergyN}
\end{equation}
\end{proposition}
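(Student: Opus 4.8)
The plan is to generalize the single-partition computation of Proposition~\ref{Prop:Eigenvalues} block-by-block, exploiting the staircase structure of the asymptotic partition $\Lambda=\lambda_1\circledast\cdots\circledast\lambda_N$. First I would record the two quantities to be matched: the content-sum expression \eqref{eq:EUnivBundleEigenN} for $\mathcal{E}_1(\Lambda)$ supplied by the preceding Lemma, and the first tRS eigenvalue $e_1=\sum_{\alpha=1}^{nN}a_\alpha$ evaluated at the locus \eqref{eq:LocusN}. Writing $\alpha=(l-1)n+i$ with $l=1,\dots,N$ and $i=1,\dots,n$, the locus assigns to the $l$-th block of $n$ parameters the common prefactor $\mathrm{a}_l$, a $q$-exponent tracking the columns of $\lambda^{(l)}$, and an $\hbar$-weighting $\hbar^{\alpha-nN}$. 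Consequently $e_1$ splits into $N$ blocks, the $l$-th of which reproduces---up to the overall $\hbar$-shift dictated by the staircase offset of step $n$---exactly the single-partition sum handled in Proposition~\ref{Prop:Eigenvalues}.

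Next I would carry out inside each block the identical algebraic rearrangement used there: split the summand into a content part proportional to $(q^{\lambda^{(l)}_i}-1)\hbar^{\,i-1}$ and a purely geometric remainder, then sum the latter by $\sum_{i=1}^n\hbar^{\,i-1}=\frac{1-\hbar^{n}}{1-\hbar}$. Summing over the $N$ blocks, the geometric remainders recombine with the prefactors $\mathrm{a}_l$ to yield the framing term $\hbar^{n}\sum_{l=1}^N\mathrm{a}_l$, while the content parts assemble into $\hbar^{n-1}(1-\hbar)$ times the total content sum of $\Lambda$. Comparing this content sum against the right-hand side of \eqref{eq:EUnivBundleEigenN} then gives the claimed identity \eqref{eq:EenergyN}. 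The statement for general $r$ follows along the same lines applied to $\Lambda^r\mathscr{U}$: here $e_r$ is the elementary symmetric function of the $a_\alpha$, and the corresponding symmetric-function identity again reduces blockwise to Proposition~\ref{Prop:Eigenvalues}.

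The main obstacle is the combinatorial bookkeeping of the $\hbar$-weights across blocks. One must verify that the content of $\Lambda$, computed through the arm and leg lengths as in \eqref{eq:EUnivBundleEigenN}, decomposes as the disjoint union of the contents of the individual $\lambda^{(l)}$, each rescaled by the power of $\hbar$ fixed by its corner position in the staircase. It is precisely this relative $\hbar^{n}$-weighting between successive blocks that must be tracked with care, so that the framing contributions collect cleanly into $\hbar^{n}\sum_l\mathrm{a}_l$ instead of leaking into the content terms. As in Proposition~\ref{Prop:StableLimitEigen}, taking $|\hbar|<1$ and letting $n\to\infty$ suppresses the subleading pieces; but to establish the exact finite-$n$ identity one must check these blockwise cancellations directly.
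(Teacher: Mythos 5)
Your proposal is correct and follows essentially the same route as the paper: the paper likewise sets $r=1$ in \eqref{eq:EUnivBundleEigenN}, evaluates $e_1=\sum_\alpha a_\alpha$ at the locus \eqref{eq:LocusN} as a double sum over the blocks $l=1,\dots,N$ and positions $a=1,\dots,n$, splits each summand into a content piece proportional to $\mathrm{a}_l(q^{\lambda_a^{(l)}}-1)\hbar^{a-1}$ plus a geometric remainder summed via $\frac{1-\hbar^{n}}{1-\hbar}$, and recombines the remainders into the framing term $\hbar^{n}\sum_l \mathrm{a}_l$. Your blockwise bookkeeping of the staircase $\hbar$-offsets is precisely the computation the paper carries out, just stated more explicitly.
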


\begin{proof}
Let us put $r=1$ in \eqref{eq:EUnivBundleEigenN}. Then we get 
\begin{equation}
\mathcal{E}_1(\Lambda)=\hbar^{n}\sum_{l=1}^N\mathrm{a}_l+\hbar^{n}(1-\hbar)e_1=
\sum_{l=1}^N\mathrm{a}_l+(1-\hbar)\sum_{l=1}^N\sum_{a=1}^n\mathrm{a}_l (q^{\lambda_a^{(l)}}-1)\hbar^{a-1}
\,.
\end{equation}
The eigenvalue of the first tRS Hamiltonian evaluated at \eqref{eq:LocusN} reads
\begin{equation}
e_1 = \sum_{l=1}^N\sum_{a=1}^n\mathrm{a}_l q^{\lambda_a^{(l)}}\hbar^{n-a}
 = \hbar^{n-1}\sum\limits_{i=1}^n \mathrm{a}_l(q^{\lambda_i}-1)\hbar^{a-1}+\hbar^{n-1}\frac{1-\hbar^{n}}{1-\hbar}\sum_{a=1}^n\mathrm{a}_l\,.
\end{equation}
Combining the last two formulae we get \eqref{eq:EenergyN}.
\end{proof}

We can directly obtain the following statement which coincides with Proposition \ref{Prop:StableLimitEigen} up to replacement of $\lambda$ with $\Lambda$.
\begin{proposition}\label{Prop:StableLimitEigenN}
Assuming that $|\hbar|<1$ we have
\begin{equation}
\mathcal{E}_1(\Lambda)= \lim\limits_{n\to\infty}\left(\hbar^{n-1}(1-\hbar) e_1\right)\,.
\label{eq:EenergyN}
\end{equation}
\end{proposition}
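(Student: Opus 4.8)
The plan is to read the result directly off the closed-form expression for $\mathcal{E}_1(\Lambda)$ established in Proposition \ref{Prop:EigenvaluesN}. There we proved the identity
\begin{equation}
\mathcal{E}_1(\Lambda)= \hbar^{n}\sum_{l=1}^N\mathrm{a}_l+\hbar^{n-1}(1-\hbar) e_1\,,
\end{equation}
valid for every finite $n$ once the equivariant parameters of $X_{nN}$ are placed on the resonance locus \eqref{eq:LocusN}. The key structural observation is that the left-hand side, $\mathcal{E}_1(\Lambda)$, is an intrinsic geometric quantity on the moduli space $\mathcal{M}_{N,k}$ — namely the eigenvalue of multiplication by the universal bundle $\mathscr{U}$ at the fixed point $[\vec{\lambda}]$ — and therefore does not depend on the auxiliary rank parameter $n$ of the A-type quiver used to realize it. Thus the same number $\mathcal{E}_1(\Lambda)$ is reproduced by the right-hand side for all $n$.

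From here I would simply isolate the relevant term and pass to the limit. Rewriting the identity as
\begin{equation}
\hbar^{n-1}(1-\hbar) e_1 = \mathcal{E}_1(\Lambda) - \hbar^{n}\sum_{l=1}^N\mathrm{a}_l\,,
\end{equation}
and using the hypothesis $|\hbar|<1$, the correction term $\hbar^{n}\sum_{l=1}^N\mathrm{a}_l$ tends to zero as $n\to\infty$ (the finite sum $\sum_l \mathrm{a}_l$ being fixed). Since $\mathcal{E}_1(\Lambda)$ is $n$-independent, taking the limit on both sides yields
\begin{equation}
\lim\limits_{n\to\infty}\left(\hbar^{n-1}(1-\hbar) e_1\right) = \mathcal{E}_1(\Lambda)\,,
\end{equation}
which is exactly the claim. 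This is the verbatim rank-$N$ analogue of the argument behind Proposition \ref{Prop:StableLimitEigen}, with the single partition $\lambda$ replaced by the asymptotic partition $\Lambda$ and the scalar prefactor replaced by the character sum $\sum_l \mathrm{a}_l$.

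There is essentially no analytic difficulty here; the content is entirely bookkeeping. The one point that deserves care — and the closest thing to an obstacle — is the justification that $\mathcal{E}_1(\Lambda)$ carries no $n$-dependence. This rests on the interpretation set up in the surrounding text: $n$ enters only through the embedding of $K_{q,\hbar}(\mathcal{M}_{N,k})$ into $\mathcal{H}_{nN}$ via the locus \eqref{eq:LocusN}, whereas the left-hand eigenvalue is computed on $\mathcal{M}_{N,k}$ itself, whose definition involves neither $n$ nor the tRS particle number $nN$. Once this is granted, the limit is immediate, and the factor $(1-\hbar)$ together with the decaying prefactor $\hbar^{n-1}$ selects precisely the finite geometric answer.
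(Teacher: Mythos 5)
Your proposal is correct and takes essentially the same route as the paper, which obtains the statement directly from the identity of Proposition \ref{Prop:EigenvaluesN}: since $|\hbar|<1$ the term $\hbar^{n}\sum_{l=1}^N\mathrm{a}_l$ vanishes as $n\to\infty$, while $\mathcal{E}_1(\Lambda)$ is independent of $n$. Your explicit justification of that $n$-independence (the eigenvalue is computed intrinsically on $\mathcal{M}_{N,k}$, with $n$ entering only through the auxiliary locus \eqref{eq:LocusN}) merely makes precise what the paper leaves implicit.
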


We leave to the reader the proof for of this proposition for higher Hamiltonians $T_r$.

\subsection{MacMahon Modules of $\mathfrak{E}$}
It was shown in \cite{Feigin:2010,Feigin:2012aa} (see also \cite{Zenkevich:2017aa}) that once certain conditions, called resonances, are met on the evaluation parameters $\mathrm{a}_1,\dots,\mathrm{a}_N$ one can find a one-to-one correspondence between the tensor product module $\mathrm{T}_N$ \eqref{eq:TensorFockModule} and MacMahon module $\mathrm{M}$ of $\mathfrak{E}$. Let us discuss this embedding from the point of view of Proposition \ref{Prop:StabLimTen}.

Starting from \eqref{eq:LocusN} we additionally impose
\begin{equation}
\mathrm{a}_l=a (q\hbar)^{1-l}\,,
\end{equation}
where $a\in\mathbb{C}^\times$ is an overall scaling parameter, to get
\begin{equation}
a^{(l)}_i = a\,q^{\lambda^{(l)}_i-l+1}\, \hbar^{n-i-l+1}=a\, t_1^{\lambda^{(l)}_i}t_2^{n-i}t_3^{l-1}\,,
\label{eq:LocusNMacMaheval}
\end{equation}
where 
\begin{equation}
t_1=q\,,\qquad t_2=\hbar\,,\qquad t_3=q^{-1}\hbar^{-1}\,,
\label{eqt123change}
\end{equation}
such that $t_1 t_2 t_3=1$. We can see that \eqref{eq:LocusNMacMaheval} provides the data for the content of plane partition $\Pi$
which is obtained from the data of integer partition $\Lambda$ from \figref{Fig:AsymptPart}. This leads us to the following

\begin{proposition}\label{Prop:StabLimTenMac}
Let $M_{nN}$ be a highest weight module of $\mathfrak{A}_n$ whose weights are subject to resonance condition \eqref{eq:LocusNMacMaheval}. Then its projective limit $n\to\infty$ is isomorphic to MacMahon module $\mathrm{M}(a)$ of $\mathfrak{E}$.
\end{proposition}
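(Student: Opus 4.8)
The plan is to bootstrap from Proposition \ref{Prop:StabLimTen} and reduce the claim to the known reconstruction of the MacMahon module from a resonant tensor product of Fock modules. First I would recall that, by Proposition \ref{Prop:StabLimTen}, the projective limit $n\to\infty$ of $M_{nN}$ is isomorphic as an $\mathfrak{E}$-module to $\mathrm{T}_N=F(\mathrm{a}_1)\otimes\cdots\otimes F(\mathrm{a}_N)$, the $\mathfrak{E}$-action arising through the surjection $\mathfrak{E}\to\mathfrak{A}_{nN}$ of Theorem \ref{Prop:SchVassHom} in the stable limit. Thus the question becomes purely one about $\mathfrak{E}$-modules: what happens to $\mathrm{T}_N$ when the evaluation parameters are pushed onto the resonance locus.

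Second, I would impose the resonance specialization $\mathrm{a}_l=a(q\hbar)^{1-l}$, which places the evaluation parameters on a geometric progression of ratio $q\hbar=t_1 t_2=t_3^{-1}$. This is exactly the locus at which the tensor product $F(\mathrm{a}_1)\otimes\cdots\otimes F(\mathrm{a}_N)$ ceases to be irreducible and collapses onto the MacMahon module $\mathrm{M}(a)$, as established in \cite{Feigin:2010,Feigin:2012aa} (see also \cite{Zenkevich:2017aa}). Invoking that result gives the $\mathfrak{E}$-module isomorphism between the resonant tensor product and $\mathrm{M}(a)$, which combined with the first step yields the proposition at the level of $\mathfrak{E}$-modules.

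Third, I would make the identification explicit on bases so as to tie it to the geometry of $\mathcal{M}_{N,k}$. Under \eqref{eq:LocusNMacMaheval} the asymptotic partition $\Lambda=\lambda_1\circledast\cdots\circledast\lambda_N$ of \figref{Fig:AsymptPart} acquires the content $a\,t_1^{\lambda^{(l)}_i}t_2^{n-i}t_3^{l-1}$, which I would read as the box weights of a plane partition $\Pi$ graded by $t_1,t_2,t_3$ along its three axes. Hence the fixed-point basis $\{[\vec{\lambda}]\}$ of $K_{q,\hbar}(\mathcal{M}_{N,k})$ is matched box-by-box with the plane-partition basis of $\mathrm{M}(a)$, and composing the module isomorphism of the first two steps with this combinatorial dictionary produces the stated isomorphism.

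The step I expect to be the main obstacle is the interchange of the two limits in the second step: one must show that passing to the projective limit $n\to\infty$ and then restricting to the resonance locus \eqref{eq:LocusNMacMaheval} agrees with restricting first and then taking the limit. Concretely, this requires that the collapse of $\mathrm{T}_N$ onto $\mathrm{M}(a)$ of \cite{Feigin:2010,Feigin:2012aa} is compatible with the truncation maps $\rho^{n+1}_n$ used to build the projective limit in Proposition \ref{Prop:FockDI}, so that no box of $\Pi$ is created or destroyed as $n\to\infty$. The assumption that $n$ is parametrically larger than $k$ should guarantee this stability, but verifying that the Fock-space null vectors appearing at resonance survive the limit coherently is the technical heart of the argument.
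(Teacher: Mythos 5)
Your proposal follows essentially the same route as the paper: the paper likewise combines Proposition \ref{Prop:StabLimTen} with the resonance collapse of the tensor product of Fock modules onto the MacMahon module from \cite{Feigin:2010,Feigin:2012aa}, and reads the specialized equivariant parameters \eqref{eq:LocusNMacMaheval} as the content of a plane partition obtained from the asymptotic partition $\Lambda$. Your closing remark on the compatibility of the resonance specialization with the projective-limit maps $\rho^{n+1}_n$ flags a technical point the paper leaves implicit, but the overall argument is the same.
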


It was then argued in \cite{Zenkevich:2017aa} that eigenvalues of Cartan generators of $\mathfrak{E}$ are given by \textit{triple} Macdonald polynomials, which can be obtained from generalized Macdonald polynomials \eqref{eq:GeneralizedMacs} after imposing resonance conditions \eqref{eq:LocusNMacMaheval}. This eigenvalue problem has a natural interpretation in terms of the ADHM construction.

\subsection{The Correspondence}
Analogously to Theorem \ref{Th:EmbeddingTh} we can formulate the following theorem for the moduli spaces of rank-$N$ sheaves.
\begin{theorem}
\label{Th:EmbeddingSheavesN}
For $nN>k$ there is the following embedding of Hilbert spaces
\begin{align}
\bigoplus\limits_{l=0}^k K_{q,\hbar}(\mathcal{M}_{N,k}) &\hookrightarrow \mathcal{H}_{nN}
\label{eq:Embeddingn}\\
[\Lambda]&\mapsto \mathsf{V}_{\textbf{q}}\,.\notag
\end{align}
for K-theory vertex function for some fixed point $q$ of maximal torus $T$.
The statement also holds in the limit $n\to\infty$
\begin{equation}
\bigoplus\limits_{l=0}^\infty K_{q,\hbar}(\mathcal{M}_{N,k}) \hookrightarrow \mathcal{H}_\infty\,,
\label{eq:Embeddinginf}
\end{equation}
where $\mathcal{H}_\infty$ is defined as a stable limit of $\mathcal{H}_{nN}$ as $n\to\infty$.
\end{theorem}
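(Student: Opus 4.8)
The plan is to mirror the proof of Theorem \ref{Th:EmbeddingTh}, replacing the single Fock module by the tensor product $\mathrm{T}_N$ of \eqref{eq:TensorFockModule} and the truncation locus \eqref{eq:aequivspec} by its rank-$N$ refinement \eqref{eq:LocusN}. First I would define the map on basis elements: to each fixed point class $[\Lambda]=[\vec{\lambda}]$ of $T_N$ acting on $\mathcal{M}_{N,k}$, indexed by the $N$-tuple $\vec{\lambda}=\{\lambda_1,\dots,\lambda_N\}$ with $\sum_l|\lambda_l|=k$, I assign the coefficient vertex function $\mathsf{V}_{\textbf{q}}$ of $X_{nN}$ evaluated at the locus \eqref{eq:LocusN}, for the unique fixed point $\textbf{q}$ at which this series truncates. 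The skyscraper description \eqref{eq:SkysraperSheaf} identifies this basis with the generalized Macdonald polynomials of \eqref{eq:GeneralizedMacs}.

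The first key step is a rank-$N$ analogue of Proposition \ref{Prop:Truncation}: I would show that when the equivariant parameters of $GL(nN)$ are split into $N$ groups and tuned as in \eqref{eq:LocusN}, there is a unique fixed point $\textbf{q}$ of $T$ on $X_{nN}$ for which the $q$-hypergeometric series \eqref{eq:V1pdef} truncates to a polynomial, and that this polynomial is the generalized Macdonald polynomial $P_{\Lambda}(\vec{\zeta}_1,\dots,\vec{\zeta}_N\,|\,q,\hbar)$. The argument runs as in the rank-one case: the $N$ decoupled tRS relations fix the eigenvalues block by block, while the staircase structure of $\Lambda=\lambda_1\circledast\cdots\circledast\lambda_N$ with step size $n$ guarantees that only one chain of subsets $\textbf{q}$ is compatible with truncation in every block simultaneously.

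Next I would invoke Proposition \ref{Prop:StabLimTen}: the projective limit $n\to\infty$ of the highest weight module $M_{nN}$ is isomorphic to $\mathrm{T}_N=F(\mathrm{a}_1)\otimes\cdots\otimes F(\mathrm{a}_N)$, with evaluation parameters $\mathrm{a}_1,\dots,\mathrm{a}_N$ read off from \eqref{eq:LocusN}. Since the fixed point basis $[\vec{\lambda}]$ of $\bigoplus_k K_{q,\hbar}(\mathcal{M}_{N,k})$ is matched, via \eqref{eq:SkysraperSheaf}, with the monomial basis vectors of $\mathrm{T}_N$ labeled by $\vec{\lambda}$, the Fock pairing on $\mathrm{T}_N$ coincides with the K-theoretic inner product. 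Hence the map $[\Lambda]\mapsto\mathsf{V}_{\textbf{q}}$ preserves the inner product and is an isometric embedding; for finite $nN>k$ the same uniqueness-of-truncation argument produces the embedding \eqref{eq:Embeddingn}, and \eqref{eq:Embeddinginf} follows by passing to the stable limit $\mathcal{H}_\infty$.

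The hard part will be establishing uniqueness of the truncating fixed point in the presence of the $N$ blocks and controlling the cross-block contributions to the vertex function. Unlike the rank-one case, the factors $E$, $G$ and $H$ in \eqref{eq:Vtauz} couple Chern roots belonging to different partitions $\lambda_l$, and I would need to show that at the staircase locus \eqref{eq:LocusN}, with $n$ taken parametrically larger than $k$, these inter-block factors contribute no poles that could obstruct truncation and do not spoil the factorization of $P_{\Lambda}$ matching the tensor structure of $\mathrm{T}_N$. This separation-of-scales estimate, already implicit in the Schiffmann--Vasserot asymptotic partition of \figref{Fig:AsymptPart}, is the technical crux; once it is in place, the isometry statement is a formal consequence of Proposition \ref{Prop:StabLimTen}.
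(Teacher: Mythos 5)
Your proposal follows exactly the route the paper intends: the paper gives no separate proof of Theorem \ref{Th:EmbeddingSheavesN}, stating only that it is ``analogous to Theorem \ref{Th:EmbeddingTh},'' and your argument is precisely that analogy spelled out --- the rank-$N$ truncation locus \eqref{eq:LocusN} in place of \eqref{eq:aequivspec}, Proposition \ref{Prop:StabLimTen} in place of Proposition \ref{Prop:FockDI}, and the uniqueness-of-truncation argument of Proposition \ref{Prop:Truncation} adapted to the staircase partition $\Lambda$. Your identification of the inter-block factors in $E$, $G$, $H$ as the technical point needing control is a detail the paper glosses over, but it does not change the approach.
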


\section{Quantum Multiplication in $K_{\textsf{T}}(\mathcal{M}_N)$}\label{Sec:QuantumMult}
In previous two sections we have demonstrated (see Propositions \ref{Prop:Eigenvalues} and \ref{Prop:EigenvaluesN}) that spectra of the trigonometric Ruijsenaars-Schneider Hamiltonians are in one-to-one correspondence with classical multiplication by universal bundles in the equivariant K-theory of the ADHM moduli space. The correspondence is summarized in \tabref{Tab:Corresp}. We shall now discuss \textit{quantum deformation} of the ADHM moduli space in connection with \textit{elliptic} Ruijsenaars-Schneider (eRS) integrable system with large number of particles. Some of the material in this section will be speculative as the corresponding mathematical tools have not yet been fully developed yet. We hope to gain a substantial understanding of these methods in the near future.

This section is also motivated in part by the correspondence between quantum geometry of $\mathcal{M}_N$ and Donaldson-Thomas/Gromov-Witten theory on $\mathbb{P}^1\times\mathbb{C}^2$ (see \cite{Okounkov:2004a,Okounkov:2005a} as well as \cite{Maulik:2008b,Maulik:2008a,Maulik:2008} for generalizations). To the best of our knowledge, all results are obtained in cohomology. One can identify the quantum multiplication operator in $H^\bullet(\text{Hilb})$ with the the elliptic Calogero-Moser Hamiltonians written in Fock representation. The quantum multiplication operator has an expansion in terms of curve counting parameter, let us call it $\mathfrak{p}$, which acts with $\mathbb{C}^\times$ rescalings on the base curve. This expansion was shown to match the expansion of the Weierstrass $\mathcal{P}(x_i-x_j|\mathfrak{p})$ function, which serves as potential in the Calogero-Moser model, in the elliptic parameter \cite{Okounkov:2004a}. 

In seminal paper \cite{Negut:2011aa} (see also \cite{Braverman:aa,Braverman:ab,Braverman:2012aa}) the elliptic Calogero-Moser system was understood both from the viewpoints of both representation theory and algebraic-geometry. The eigenfunctions of its Hamiltonians were proven to be equivariant integrals of certain characteristic classes over the \textit{affine Laumon spaces}. Recently Nekrasov showed \cite{Nekrasov:2017ab,Nekrasov:2017aa} that a similar equivariant integral with full maximal torus\footnote{To get a stationary eigenvalue problem one needs to turn off one of the equivariant parameters on either complex line in $\mathbb{C}^2$.} solves the corresponding \textit{non-stationary} Schr\"odinger equation. 

Once the elliptic Calogero-Moser system is downgraded to the trigonometric one the results of \textit{loc. cit.} apply after reducing affine Laumon space $\mathcal{L}^{\text{aff}}_{\textbf{d}}$ to the its finite version $\mathcal{L}_{\textbf{d}}$. Here vector $\textbf{d}=(d_1,\dots d_s)$ shows degrees of parabolic sheaves which are used in the construction of the Laumon space. For the purposes of our presentation the number of components in $\textbf{d}$ will always be equal to the rank of gauge group of the supersymmetric theory which is used in the construction. In physics language the spectrum of the elliptic Calogero-Moser model is described by instanton counting in $\mathcal{N}=2^*$ gauge theory in the presence of a monodromy defect of maximal Levi type \cite{Alday:2010vg,Nawata:2014nca}.

The sought generalization of the above results to quantum K-theory should be formulated in terms of the \textit{relativistic} generalization of the Calogero-Moser system -- the elliptic Ruijsenaars-Schneider (eRS) model.
Physically we will be studying five-dimensional $\mathcal{N}=1^*$ gauge theory with defect of maximal Levi type 
\cite{Bullimore:2015fr,Koroteev:2018a}.

\subsection{Elliptic Ruijsenaars-Schneider Model}
The Hamiltonians of the elliptic RS model can be easily obtained from trigonometric RS Hamiltonians \eqref{eq:tRSRelationsEl} by replacing rational functions with elliptic theta-functions of the first kind
\begin{equation}
E_r(\boldsymbol{\zeta})=\sum_{\substack{\mathcal{I}\subset\{1,\dots,n\} \\ |\mathcal{I}|=r}}\prod_{\substack{i\in\mathcal{I} \\ j\notin\mathcal{I}}}\frac{\theta_1(\hbar\zeta_i/\zeta_j|\mathfrak{p})}{\theta_1(\hbar\zeta_i/\zeta_j|\mathfrak{p})}\prod\limits_{i\in\mathcal{I}}p_k \,,
\label{eq:eRSRelationsEl}
\end{equation}
where $\mathfrak{p}\in\mathbb{C}^\times$ is the new parameter which characterizes the elliptic deformation away from the trigonometric locus, where $\mathfrak{p}=1$ and we get \eqref{eq:tRSRelationsEl} back.

As in the trigonometric case we shall be interested in the eigenvalues and eigenfunctions of these operators
\begin{equation}
E_r(\boldsymbol{\zeta}) \mathcal{Z} = \mathscr{E}_r\mathcal{Z}\,,\qquad r=1,\dots,n\,.
\label{eq:eRSequation}
\end{equation}

As a direct generalization of the results of \cite{Negut:2011aa} to K-theory lead to the following
\begin{conjecture}\label{Conj:eRSEigenproblem}
The solution of \eqref{eq:eRSequation} is given by the K-theoretic holomorphic equivariant Euler characteristic of the affine Laumon space 
\begin{equation}
\mathcal{Z} = \sum_{\textbf{d}} \vec{\mathfrak{q}}^{\textbf{d}} \int\limits_{\mathcal{L}_{\textbf{d}}} 1\,,
\label{eq:equivaraintKthLaumon}
\end{equation}
where $\vec{\mathfrak{q}}=(\mathfrak{q}_1,\dots,\mathfrak{q}_n)$ is a string of $\mathbb{C}^\times$-valued coordinates on the maximal torus of $\mathcal{L}^{\text{aff}}_{\textbf{d}}$.
The eigenvalues $\mathscr{E}_r$ are equivariant Chern characters of bundles $\Lambda^r \mathscr{W}$, where $\mathscr{W}$ is the constant bundle of the corresponding ADHM space. In other words they have the following form
\begin{equation}
\mathscr{E}_r = e_r + \sum_{l=1}^\infty \mathfrak{p}^l \mathcal{E}^{(l)}_r \,,
\label{eq:eRSEnergies}
\end{equation}
where $e_r$ are symmetric functions of the equivariant parameters $a_1,\dots, a_N$.
\end{conjecture}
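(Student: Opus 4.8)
The plan is to realize the conjecture as the relativistic (K-theoretic) lift of Negut's cohomological theorem \cite{Negut:2011aa}, in which the elliptic Calogero--Moser eigenfunctions are equivariant integrals of characteristic classes over affine Laumon spaces. Here the integral is replaced by the holomorphic equivariant Euler characteristic \eqref{eq:equivaraintKthLaumon}, and the Calogero--Moser operators are replaced by the eRS operators \eqref{eq:eRSRelationsEl}. The structural input is the quasimap/vertex-function formalism: the generating function $\mathcal{Z}$ is a K-theoretic vertex capped by $\mathcal{L}_{\textbf{d}}$, and such generating functions are controlled by quantum difference equations in the Kähler parameters $\vec{\mathfrak{q}}$. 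The objective is to identify this quantum difference equation with the eRS eigenvalue problem \eqref{eq:eRSequation}, with the elliptic nome $\mathfrak{p}$ playing the role of the quantum parameter.

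First I would construct a geometric action of the eRS Hamiltonians $E_r(\vec{\zeta})$ on the localized equivariant K-theory of the affine Laumon spaces, implementing the relativistic shifts $p_k$ as the degree-shift operators $\vec{\mathfrak{q}}\mapsto q\,\vec{\mathfrak{q}}$ and the theta-function factors through elliptic Hecke correspondences on the parabolic flags. A rigid consistency check is supplied by the trigonometric degeneration $\mathfrak{p}\to 1$: there the affine Laumon space $\mathcal{L}^{\text{aff}}_{\textbf{d}}$ collapses to its finite version $\mathcal{L}_{\textbf{d}}$, the theta-factors degenerate to the rational factors of \eqref{eq:tRSRelationsEl}, and the operators must recover the tRS Hamiltonians $T_r = e_r(Y_1,\dots,Y_n)$ of \eqref{eq:SpericalDAHAGen}.

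Second, I would verify the eigenvalue statement order by order in $\mathfrak{p}$. At $\mathfrak{p}=0$ the Euler characteristic reduces to the trigonometric vertex, so by Theorem \ref{Th:tRSeqproof} together with Proposition \ref{Prop:Eigenvalues} the leading eigenvalue is the elementary symmetric function $e_r$ of the equivariant parameters, in agreement with the first term of \eqref{eq:eRSEnergies}. The higher corrections $\mathcal{E}^{(l)}_r$ then arise from the degree-$\textbf{d}$ (instanton) expansion of the Euler characteristic; applying the Chern character map to the cohomological identity of \cite{Negut:2011aa} identifies the full $\mathfrak{p}$-series $\mathscr{E}_r$ with the equivariant Chern character of $\Lambda^r\mathscr{W}$, where $\mathscr{W}$ is the constant framing bundle of the ADHM space.

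The hard part will be establishing that the quantum difference equation for $\mathcal{Z}$ closes exactly into the elliptic eigenvalue problem \eqref{eq:eRSequation}, rather than merely matching it to finite order in $\mathfrak{p}$. In cohomology this closure follows from the theory of stable envelopes and the explicit Calogero--Moser correspondences; the K-theoretic, relativistic analog demands the elliptic stable envelopes of \cite{Aganagic:2017be} and a careful control of the quasi-periodicity of the factors $\theta_1(\hbar\zeta_i/\zeta_j|\mathfrak{p})$, so that their monodromy is matched to that of the quantum difference operator. Equivalently, one must construct the presently missing representation of the eRS Hamiltonians on $K_{T_N}(\mathcal{M}_N)$ lifting the elliptic Hall algebra action of \eqref{eq:SpericalDAHAGen}; supplying this action, and proving convergence of the resulting theta-series, is the essential obstacle and the reason the statement remains a conjecture.
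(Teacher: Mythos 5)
The statement you are asked to prove is stated in the paper as a \emph{conjecture}, and the paper supplies no proof of it. The author explicitly writes that the K-theoretic equivariant pushforward $\int_{\mathcal{L}_{\textbf{d}}} 1$ in \eqref{eq:equivaraintKthLaumon} ``has not been defined yet for non-hyperK\"ahler spaces,'' and that the conjecture has only been \emph{verified to several orders in} $\mathfrak{p}$ in \cite{Bullimore:2015fr}. So there is no proof to compare yours against; the fair question is whether your roadmap matches the paper's heuristic justification and whether it could in principle be completed. On the first count you do well: lifting Negut's cohomological theorem, recovering the tRS eigenproblem and the leading eigenvalue $e_r$ at $\mathfrak{p}=0$ via the identification of the finite Laumon Euler characteristic with the vertex function (the paper's Corollary following the conjecture, combined with Theorem \ref{Th:tRSeqproof}), and organizing the corrections $\mathcal{E}^{(l)}_r$ by instanton degree is exactly the logic the paper sketches. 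You also correctly name the obstruction at the end.

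The one thing to sharpen is that the gap is more foundational than ``constructing the eRS action and controlling quasi-periodicity.'' Your plan presumes the quasimap/vertex and quantum-difference-equation machinery (stable envelopes, elliptic or otherwise) applies to $\mathcal{L}^{\text{aff}}_{\textbf{d}}$, but that machinery is developed for Nakajima quiver varieties, and the affine Laumon space is not one: it is a $\mathbb{Z}_n$-quotient-type cousin of the ADHM space described by a chainsaw quiver, and is not hyperK\"ahler. Until the virtual structure sheaf and equivariant pushforward are defined in that setting, even the left-hand side of \eqref{eq:equivaraintKthLaumon} is not a well-defined object, so the ``quantum difference equation for $\mathcal{Z}$'' you propose to close into \eqref{eq:eRSequation} does not yet exist as a theorem-grade object. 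Your order-by-order verification step is sound and is what the literature actually does, but it can only ever confirm finitely many coefficients of \eqref{eq:eRSEnergies}; the all-orders closure is precisely what remains open, and no amount of monodromy-matching of the $\theta_1$ factors substitutes for the missing foundational definition. A minor notational point: you write the trigonometric degeneration once as $\mathfrak{p}\to 1$ and once as $\mathfrak{p}=0$; the expansion \eqref{eq:eRSEnergies} makes clear the intended trigonometric limit of the series is $\mathfrak{p}\to 0$, and you should use that consistently.
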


The K-theoretic equivariant pushforward \eqref{eq:equivaraintKthLaumon}, as in say \eqref{eq:vertexQKgen}, has not been defined yet for non-hyperK\"ahler spaces. We hope that this will be done in the near future which will enable us to prove the Conjecture.

The affine Laumon space provides a generalization for the ADHM moduli space, in fact the former can be obtained by a certain quotient of the latter by $\mathbb{Z}_n$. The so-called chainsaw quiver provides all necessary data for equivariant localization computations on $\mathcal{L}^{\text{aff}}_{\textbf{d}}$. The fixed points of the maximal torus of $\mathcal{L}^{\text{aff}}_{\textbf{d}}$ are parameterized by an $n$-tuple of Young tableaux $\vec{\mu}=(\mu_1,\dots,\mu_n)$. The integrals in \eqref{eq:equivaraintKthLaumon} can be computed using localization and the resulting expression is an infinite sum over all sectors labelled by $k_l(\vec{\mu})$
\begin{equation}
\mathcal{Z} = \sum_{\vec{\mu}} \prod_{l=1}^n \mathfrak{q}_l^{k_l(\vec{\mu})}\,z_{\vec{\mu}}(\vec{a},\hbar, q)\,.
\label{eq:ZPartFuncRamFull}
\end{equation}

In the limit when the parabolic structure is removed \eqref{eq:equivaraintKthLaumon} is expected to reproduced the well known Euler characteristic of $\mathcal{M}_N$ (Nekrasov instanton partition function) Thus we can impose the following
\begin{equation}
\mathfrak{p}=\mathfrak{q}_1\cdot\dots\cdot\mathfrak{q}_n\,,
\label{eq:ProdInst}
\end{equation}
where $\mathfrak{p}$ counts the degrees of sheaves in the standard ADHM localization computation. We can use the above equation in order to eliminate one of $\mathfrak{q}_i$ variables. In \cite{Bullimore:2015fr} the Conjecture \ref{Conj:eRSEigenproblem} was verified in several orders in $\mathfrak{p}$. 

Using the above observations we can think of \eqref{eq:ZPartFuncRamFull} as a Laurent series in $\mathfrak{q}_1,\dots,\mathfrak{q}_{n-1}$. Indeed, if we express $\mathfrak{q}_n$ in using \eqref{eq:ProdInst} we shall get a series in all positive and negative powers of $\mathfrak{q}_1,\dots,\mathfrak{q}_{n-1}$ and in nonnegative powers of $\mathfrak{p}$. In the limit $\mathfrak{p}\to 0$ (or constant term in $\mathfrak{p}$ in the above Laurent series) formula \eqref{eq:ZPartFuncRamFull} describes holomorphic Euler characteristic of the finite\footnote{non affine} Laumon space, and, as we have proven above, the resulting function is the eigenfunction of tRS Hamiltonians. It is easy to see that at $\mathfrak{p}=0$ we get a Taylor series is $\mathfrak{q}_i$ parameters.  The chainsaw quiver is replaced by a handsaw quiver for the Laumon space (see e.g. \cite{Finkelberg:2010}). 

We can formulate a corollary form Conjecture \ref{Conj:eRSEigenproblem}
\begin{corollary}
The equivariant K-theoretic Euler characteristic of the Laumon space coincides with the vertex function coefficient \eqref{eq:V1pdef} 
for trivial class in $K_T(X_n)$.
\end{corollary}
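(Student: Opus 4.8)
The plan is to deduce the statement from Conjecture \ref{Conj:eRSEigenproblem} by degenerating the elliptic eigenvalue problem to the trigonometric one and then invoking Theorem \ref{Th:tRSeqproof}. First I would take the trigonometric limit $\mathfrak{p}\to 0$ of \eqref{eq:eRSequation}. As the theta-functions in \eqref{eq:eRSRelationsEl} degenerate, the operators $E_r(\vec{\zeta})$ collapse to the tRS operators $T_r(\vec{\zeta})$ of \eqref{eq:tRSRelationsEl}, while by \eqref{eq:eRSEnergies} the eigenvalues $\mathscr{E}_r=e_r+\sum_{l\geq 1}\mathfrak{p}^l\mathcal{E}^{(l)}_r$ reduce to the symmetric functions $e_r$ of the equivariant parameters. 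Geometrically this is exactly the $\mathfrak{p}\to 0$ specialization discussed around \eqref{eq:ProdInst}: the affine Laumon space degenerates to the finite Laumon space $\mathcal{L}_{\textbf{d}}$, the chainsaw quiver is replaced by the handsaw quiver, and the Laurent series \eqref{eq:ZPartFuncRamFull} truncates to its constant term in $\mathfrak{p}$, a genuine Taylor series in $\mathfrak{q}_1,\dots,\mathfrak{q}_{n-1}$.

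Granting the Conjecture, the trigonometric limit $\mathcal{Z}|_{\mathfrak{p}=0}$ then satisfies the tRS system \eqref{eq:tRSEigenz} with eigenvalues $e_r(\mathbf{a})$, exactly as the theta-normalized vertex $\mathsf{V}^{(1)}_{\textbf{p}}$ of \eqref{eq:DefVvertMac} does by Theorem \ref{Th:tRSeqproof}. The next step is the dictionary matching the two solutions: I would identify the instanton parameters $\mathfrak{q}_i$ with the K\"ahler parameters $z_i^\sharp=\zeta_i/\zeta_{i+1}$ of $X_n$, so that the degree sectors $k_l(\vec{\mu})$ of \eqref{eq:ZPartFuncRamFull} correspond to the quasimap degrees $\mathbf{d}$ in \eqref{eq:V1pdef}. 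Since both the eRS eigenfunction property of the Conjecture and the tRS property of Theorem \ref{Th:tRSeqproof} pertain to objects carrying the same theta prefactor of \eqref{eq:DefVvertMac}, cancelling this common factor reduces the claim to the identity between the bare Laumon Euler characteristic and the bare coefficient \eqref{eq:V1pdef}.

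The heart of the argument, and the main obstacle, is a uniqueness statement for solutions of the tRS system. Since \eqref{eq:tRSEigenz} is first order in each variable, fixing the eigenvalues $e_r(\mathbf{a})$ together with the leading degree-zero term and the analytic data of a chosen torus-fixed point $\textbf{p}$ should pin down the entire power series; I would make this precise by matching the two series at lowest order in the $\mathfrak{q}_i$ and showing that the recursion imposed by $T_r$ propagates the agreement to all orders. Equivalently, in the contour presentation \eqref{eq:GenerictRSSolution} the cycle $C_{\textbf{p}}$ encircling the poles of $\textbf{p}$ selects one solution, and I would identify this cycle with the fixed-point sector of the Laumon localization sum. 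The subtlety is that the solution space of \eqref{eq:tRSEigenz} is strictly larger than one-dimensional, so matching eigenvalues alone is insufficient: one must track the normalization prefactor and the fixed point explicitly, and verify that the degeneration of the Laumon integrand reproduces precisely this prefactor rather than a spurious independent tRS solution.
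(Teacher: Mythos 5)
Your route is genuinely different from the paper's, and it has a real gap. The paper's proof is a direct localization comparison: one computes the equivariant character of the universal bundle over the (finite) Laumon space, organized by the handsaw-quiver fixed points, and identifies the expansion variables $\mathfrak{q}_1,\dots,\mathfrak{q}_{n-1}$ with the quantum parameters $z_i$ in \eqref{eq:Vtauz}; the two fixed-point sums then agree term by term. No difference equation and no appeal to Conjecture \ref{Conj:eRSEigenproblem} is needed. Your plan instead deduces the identity from the eRS eigenvalue problem degenerated to $\mathfrak{p}=0$, which makes the corollary conditional on an unproven conjecture --- a logical inversion, since the $\mathfrak{p}=0$ statement is exactly the part that can be established unconditionally and is then used as evidence for the conjecture, not the other way around.

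Even granting the conjecture, the step you yourself flag as the ``main obstacle'' is where the proof actually lives, and your plan does not close it. The joint tRS system \eqref{eq:tRSEigenz} with fixed eigenvalues $e_r(\mathbf{a})$ has a solution space that is a module over $q$-periodic functions of the $\zeta_i$, so matching the eigenvalues and the lowest-order term does not pin down the series: any $q$-periodic multiplier (precisely the kind of theta-prefactor ambiguity visible in \eqref{eq:DefVvertMac}) survives the recursion in the $\mathfrak{q}_i$. To exclude a ``spurious independent tRS solution'' you would have to identify the Laumon fixed-point sectors with the contour $C_{\textbf{p}}$ in \eqref{eq:GenerictRSSolution} and check that the degree-by-degree coefficients agree --- but that check is already the paper's entire proof, so the detour through the difference equations buys nothing and imports an unnecessary hypothesis. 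If you want to keep your framing, invert it: prove the character identity directly as the paper does, and then present the $\mathfrak{p}=0$ consistency with Theorem \ref{Th:tRSeqproof} as a corollary rather than as the engine of the argument.
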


\begin{proof}
To prove one computes the equivariant character of the universal bundle over the Laumon space in terms of variables $\mathfrak{q}_1,\dots,\mathfrak{q}_{n-1}$ which are then identified with quantum parameters $z_1,\dots, z_n$ in \eqref{eq:Vtauz}.
\end{proof}

According to \secref{Sec:qKZtRS} once condition \eqref{eq:aequivspec} is satisfied the coefficient vertex function of $X_n$ truncates to the corresponding Macdonald polynomial. Based on the above discussion we conclude that generating function \eqref{eq:ZPartFuncRamFull} after imposing \eqref{eq:aequivspec} will become a Laurent series plus this Macdonald polynomial. Also the Laurent part is uniquely fixed by this condition. Therefore functions $\mathcal{Z}$ under \eqref{eq:aequivspec} for all possible Young tableau $\lambda$ span Hilbert space which is isomorphic to $\mathcal{H}_n$, but has a different scalar product due to the presence of additional series in $\mathfrak{p}$. 

\subsection{eRS Eigenvalues}
One can perform the localization computation to compute \eqref{eq:eRSEnergies} (see \cite{Bullimore:2015fr,Koroteev:2018a}). The first several terms for the eigenvalues of $E_1$ look as follows
\begin{equation}
\mathscr{E}_1 = \sum_{i=1}^n a_i -\mathfrak{p}(1-\hbar)(q-\hbar^{-1})q^{-1}\hbar^{n}\sum_{i=1}^n a_i\prod_{\substack{j=1 \\ j\neq i}}^n \frac{(a_i-\hbar^{-1}a_j)(\hbar a_i-q a_j)}{(a_i-a_j)(a_i-qa_j)}+o(\mathfrak{p}^2)\,.
\label{eq:E1eigneeRS}
\end{equation}

\subsection{Quantum Multiplication in $K_{q,\hbar}(\text{Hilb})$}
Okounkov and Smirnov \cite{Okounkov:2016fp} studied the operator of quantum multiplication $\textsf{M}_{\mathscr{L}}$ by line bundle $\mathscr{L}$ for an arbitrary Nakajima quiver variety $X$. This operator enters the quantum difference equation of the form
\begin{equation}
\Psi(q^{\mathscr{L}}\mathfrak{z})=\textsf{M}_{\mathscr{L}}(\mathfrak{z})\Psi(\mathfrak{z})\,,
\end{equation}
which is solved by a flat q-difference connection on functions of quantum parameter $\mathfrak{z}$ with values in $K_{\textsf{T}}$(X).
The authors study stable envelopes which can be represented as real slopes inside the second cohomology $s\in H^2(X,\mathbb{R})$. They experience a jump when $s$ crosses a rational wall in $H^2(X,\mathbb{R})$. One considers all alcoves with respect to affine hyperplane reflections for quantum Weyl group acting on $K_T(X)$. Then one can take a path from the base alcove to another one and each time we cross a wall labelled by rational slope $w_\alpha$ and define
\begin{equation}
\textsf{M}_{\mathscr{L}} = \mathcal{O}(1) \textsf{M}_{\alpha_1} \cdots \textsf{M}_{\alpha_\mathscr{L}}\,.
\end{equation}
It can be shown that the answer is path independent. In the example of Hilb$^k$ the $H^2(X,\mathbb{R})$ lattice is one-dimensional.

Operator $M_{\mathscr{L}}$ corresponds to the quantum multiplication by $\Lambda^k\mathscr{V}$ in $K_{q,\hbar}(Hilb^k)$. In order to find multiplications by $\Lambda^l\mathscr{V}\,,1\leq l<k$ one needs to make certain generalizations to \cite{Okounkov:2016fp}. 

Using results of \cite{Pushkar:2016} and \cite{Smirnov:2016},  we can formulate the following statement.
\begin{proposition}\label{eq:QuantMultConj}
The eigenvalues of quantum multiplication operators by bundles $\Lambda^l\mathscr{V}\,,1\leq l\leq k$ in quantum K-theory of $\mathcal{M}_{N,k}$ are given by symmetric polynomials $e_l(s_1,\dots s_k)$ of Bethe roots which satisfy the following Bethe equations
\begin{equation}
\prod_{l=1}^N\frac{s_a-\mathrm{a}_l}{s_a-q^{-1}\hbar^{-1}\mathrm{a}_l}\cdot\prod_{\substack{b=1 \\ b\neq a}}^k\frac{s_a-q s_b}{s_a-q^{-1}s_b}\frac{s_a-\hbar s_b}{s_a-\hbar^{-1}s_b}\frac{s_a-q^{-1}\hbar^{-1} s_b}{s_a-q\hbar s_b}=\mathfrak{z}\,,\quad a=1,\dots, k\,,
\label{eq:BetheADHM}
\end{equation}
where $\mathrm{a}_l$ are parameters from \eqref{eq:LocusN}.
\end{proposition}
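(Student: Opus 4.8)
The plan is to treat $\mathcal{M}_{N,k}$ as the Nakajima quiver variety attached to the single-vertex Jordan (ADHM) quiver with dimension vector $\mathbf{v}=k$ and framing $\mathbf{w}=N$, and to apply the quantum K-theoretic Bethe/gauge correspondence of Pushkar, Smirnov and Zeitlin \cite{Pushkar:2016,Smirnov:2016} together with the quantum-difference-equation description of the multiplication operators of Okounkov and Smirnov \cite{Okounkov:2016fp} recalled above. The central input is their theorem that, for a Nakajima variety, the operators of quantum multiplication by the exterior powers $\Lambda^l\mathscr{V}$ of the tautological bundle generate a maximal commutative (Bethe) subalgebra of $K_{\mathsf{T}}(\mathcal{M}_{N,k})[[\mathfrak{z}]]$, and that this algebra is simultaneously diagonalized on a basis of Bethe eigenvectors. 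On each eigenvector the class $\Lambda^l\mathscr{V}$ acts by the elementary symmetric polynomial $e_l(s_1,\dots,s_k)$ in the Chern roots $s_a$ of $\mathscr{V}$, where the $s_a$ run over the solutions of the Bethe ansatz equations of the XXZ-type integrable system dual to the quiver. Thus the whole statement reduces to two tasks: extracting the explicit Bethe equations from the ADHM quiver data, and checking that the resulting spectrum is the one claimed.

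First I would read off the Bethe equations from the matter content of the ADHM quiver. A single Chern root $s_a$ interacts with the framing through the two maps $I\in\mathrm{Hom}(\mathscr{W},\mathscr{V})$ and $J\in\mathrm{Hom}(\mathscr{V},\mathscr{W})$, the latter carrying the symplectic weight $q^{-1}\hbar^{-1}$ forced by the moment map; these contribute the fundamental factor $\prod_{l=1}^N (s_a-\mathrm{a}_l)/(s_a-q^{-1}\hbar^{-1}\mathrm{a}_l)$, in which the $\mathrm{a}_l$ are precisely the framing equivariant parameters of \eqref{eq:LocusN}. The self-interaction of the roots is governed by the loop, whose three relevant equivariant weights are $q$ and $\hbar$ (the maps $B_1,B_2$ of the ADHM data, i.e.\ the two coordinate directions of $\mathbb{C}^2$) and $q^{-1}\hbar^{-1}$ (the direction conjugate to the moment-map relation); each weight produces one of the three adjoint ratios $\tfrac{s_a-qs_b}{s_a-q^{-1}s_b}$, $\tfrac{s_a-\hbar s_b}{s_a-\hbar^{-1}s_b}$ and $\tfrac{s_a-q^{-1}\hbar^{-1}s_b}{s_a-q\hbar s_b}$ over $b\neq a$, the numerator/denominator pairing reflecting the self-duality of the hyperk\"ahler tangent character. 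Finally the K\"ahler parameter enters as the fugacity conjugate to $\deg\mathscr{V}$, placing $\mathfrak{z}$ on the right-hand side and producing exactly \eqref{eq:BetheADHM}. Combined with the diagonalization theorem, this identifies the eigenvalues of $\Lambda^l\mathscr{V}$ with $e_l(s_1,\dots,s_k)$ on the on-shell roots.

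To pin down the normalization and verify the spectrum I would take the classical limit $\mathfrak{z}\to 0$, in which the right-hand side of \eqref{eq:BetheADHM} vanishes and the roots are forced onto the content lattice $s_a\in\{\mathrm{a}_l\,q^{i-1}\hbar^{j-1}\}$; the solutions are then indexed by the $N$-tuples of partitions labelling the fixed points, and $e_l(s_1,\dots,s_k)$ degenerates to the classical multiplication eigenvalue of $\Lambda^l\mathscr{V}$, which is related through $\mathscr{U}=\mathscr{W}+(1-q)(1-\hbar)\mathscr{V}$ to the characters $\mathcal{E}_l(\Lambda)$ already computed in Propositions \ref{Prop:Eigenvalues} and \ref{Prop:EigenvaluesN}. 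This consistency check fixes all constants and shows that the $\mathfrak{z}$-dependence supplies exactly the quantum corrections. The main obstacle is that \cite{Pushkar:2016,Smirnov:2016} establish the quantum Bethe algebra for finite ADE-type quivers, whose integrable backbone is a quantum affine algebra, whereas the Jordan quiver is affine and its dual integrable structure is the quantum toroidal $\mathfrak{gl}_1$ algebra $\mathfrak{E}$ appearing throughout this paper. The delicate point is therefore to show that the operators $\Lambda^l\mathscr{V}$ genuinely close into a commutative algebra whose joint spectrum is \emph{exhausted} by the solutions of \eqref{eq:BetheADHM} in the toroidal setting; this requires transporting the capped-vertex rationality and the $\mathfrak{z}$-difference-equation analysis of \cite{Smirnov:2016,Okounkov:2016fp} to the non-compact ADHM quiver, which is exactly the ingredient not yet fully available and the reason the result is stated as a proposition resting on that extension.
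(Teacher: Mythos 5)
Your proposal follows essentially the same route as the paper: the paper likewise invokes \cite{Pushkar:2016} and \cite{Smirnov:2016} for the identification of the eigenvalues of $\Lambda^l\mathscr{V}$ with elementary symmetric polynomials of Bethe roots, and obtains \eqref{eq:BetheADHM} from the saddle-point behavior of the ADHM vertex function, which is the same computation as your term-by-term reading of the quiver matter content (framing factor from $I,J$, three adjoint ratios from the loop and the moment-map direction, $\mathfrak{z}$ as the fugacity for $\deg\mathscr{V}$). Your explicit flagging of the gap --- that the cited Bethe-algebra results are established for finite-type quivers and must be transported to the Jordan quiver with its quantum toroidal symmetry --- is a point the paper passes over silently; otherwise the two arguments coincide.
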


The above equations can be obtained from studying saddle point behavior of the vertex function of $K_{q,\hbar}(\mathcal{M}_{N,k})$.

In particular, the eigenvalue of the multiplication by $\textsf{M}$ in \cite{Okounkov:2016fp} is given by $s_1\cdots s_k$, where the Bethe roots solve the above equations and are thus functions of all equivariant parameters and quantum parameter $\mathfrak{p}$. 

Bethe equations \eqref{eq:BetheADHM} can be easily reconstructed from a slight generalization of the ADHM quiver \figref{fig:QuiverVarieryNak} with three oriented loops (instead of usual two). It may be more convenient at this stage to make change of variables \eqref{eqt123change} and treat parameters $t_1,t_2, t_3$ democratically. 

The above Bethe equations arise as derivatives of the so-called Yang-Yang function and read
\begin{equation}
\exp \frac{\partial \mathcal{W}_{\text{ADHM}}}{\partial s_a}=1\,.
\end{equation}

\subsection{Classical Limit}
We can see that when $\mathfrak{z}=0$ equations \eqref{eq:BetheADHM} reduce to 
\begin{equation}
\prod_{\substack{b=1 \\ b\neq a}}^k(s_a-t_1 s_b)(s_a-t_2 s_b)(s_a-t_3 s_b)=0\,,\quad a=1,\dots, k\,,
\label{eq:BetheADHMp0}
\end{equation}
whose solutions provide the data for plane partitions of MacMahon modules of $\mathfrak{E}$ which we have addressed earlier. Constant $t_3$-slices of this partition reproduce symmetric functions from \eqref{eq:EUnivBundleEigenN}.

\subsection{Main Conjecture}
We have argued above that the eigenfunctions of the eRS model on discrete locus of equivariant parameters \eqref{eq:LocusN} span Hilbert space $\mathcal{H}_{nN}$. Using the results from \cite{2013arXiv1309.7094S,Koroteev:2016} we can explicitly compute the eigenvalues \eqref{eq:E1eigneeRS} of the eRS operators using the oscillator formalism. They lead us to the following statement which conjectures the relationship between the eRS spectrum and quantum K-theory $QK_{q,\hbar}(\mathcal{M}_N)$ of the moduli space of rank-$N$ sheaves on a plane.

\begin{conjecture}
Let $\mathscr{E}^{(nN)}_r,\, r=1,\dots, nN$ be the eigenvalues of eRS Hamiltonians \eqref{eq:eRSRelationsEl}. Then the eigenvalues of the operators of quantum multiplication by the $r$-th skew powers $\Lambda^r \mathscr{U}$ of the universal bundle over $\mathcal{M}_{N}$ are in one-to-one correspondence with $\mathscr{E}^{(nN)}_r$. For $r=1$ the relationship is is given by the following limit
\begin{equation}
\mathscr{E}^{(N)}_1(\mathfrak{z})=\lim_{n\to\infty}\left[\hbar^{n-1}(1-\hbar)\frac{(\mathfrak{p}\hbar;\mathfrak{p})_\infty (\mathfrak{p}q\hbar;\mathfrak{p})_\infty}{(\mathfrak{p};\mathfrak{p})_\infty (\mathfrak{p}q^{-1};\mathfrak{p})_\infty} \mathscr{E}^{(nN)}_1\right]\,,
\label{eq:eRSenergy}
\end{equation}
such that quantum parameter in $QK_{q,\hbar}(\mathcal{M}_N)$ is identified with elliptic parameter $\mathfrak{p}$ as
\begin{equation}
\mathfrak{z} = -\mathfrak{p}\sqrt{q\hbar}\,.
\label{eq:matchingzp}
\end{equation}
\end{conjecture}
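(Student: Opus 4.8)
The plan is to prove the conjecture by an order-by-order matching in the elliptic parameter $\mathfrak{p}$, using the trigonometric statement of Proposition \ref{Prop:StableLimitEigenN} as the anchoring base case and the Bethe-ansatz characterization of quantum multiplication from Proposition \ref{eq:QuantMultConj} as the bridge between the two integrable structures. On the quantum K-theory side the eigenvalues of multiplication by $\Lambda^r\mathscr{U}$ are symmetric functions $e_r(s_1,\dots,s_k)$ of Bethe roots solving \eqref{eq:BetheADHM}, which depend on $\mathfrak{z}$; on the eRS side the eigenvalues have the expansion \eqref{eq:eRSEnergies}. The goal is to show these two families coincide under \eqref{eq:matchingzp} after the normalized large-$n$ limit of \eqref{eq:eRSenergy}.

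First I would verify the base case $\mathfrak{p}=0$, which under \eqref{eq:matchingzp} corresponds to $\mathfrak{z}=0$. On the quantum side, setting $\mathfrak{z}=0$ collapses the Bethe equations \eqref{eq:BetheADHM} to \eqref{eq:BetheADHMp0}, so that quantum multiplication by $\Lambda^r\mathscr{U}$ degenerates to classical multiplication over $\mathcal{M}_{N,k}$. On the eRS side the normalizing prefactor $\frac{(\mathfrak{p}\hbar;\mathfrak{p})_\infty (\mathfrak{p}q\hbar;\mathfrak{p})_\infty}{(\mathfrak{p};\mathfrak{p})_\infty (\mathfrak{p}q^{-1};\mathfrak{p})_\infty}$ equals $1$ at $\mathfrak{p}=0$, while $\mathscr{E}^{(nN)}_1$ reduces to the tRS eigenvalue $e_1$ of \eqref{eq:eRSEnergies}. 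Hence at this order \eqref{eq:eRSenergy} is precisely the content of Proposition \ref{Prop:StableLimitEigenN}, which is already established.

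Next I would turn to the elliptic corrections. On the quantum side I would expand the Bethe roots $s_a=s_a(\mathfrak{z})$ solving \eqref{eq:BetheADHM} as power series around their $\mathfrak{z}=0$ values and substitute $\mathfrak{z}=-\mathfrak{p}\sqrt{q\hbar}$, converting $e_r(s_1,\dots,s_k)$ into a series in $\mathfrak{p}$. On the eRS side I would use the oscillator formalism of \cite{2013arXiv1309.7094S,Koroteev:2016} to compute the coefficients $\mathcal{E}^{(l)}_r$ in \eqref{eq:eRSEnergies} at the discrete locus \eqref{eq:LocusN}, the first being given explicitly by \eqref{eq:E1eigneeRS}. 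The key step is then to show that, after applying the normalizing factor and taking $n\to\infty$ with $|\hbar|<1$, these two $\mathfrak{p}$-expansions agree coefficient by coefficient; the leading elliptic correction has already been checked in \cite{Bullimore:2015fr}, which supplies the inductive starting point.

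The main obstacle is twofold. At the foundational level the K-theoretic equivariant pushforward \eqref{eq:equivaraintKthLaumon} over the affine Laumon space has not yet been defined in this non-hyperK\"ahler setting, so Conjecture \ref{Conj:eRSEigenproblem}---on which the eRS side rests---is itself not yet rigorous, and making that definition precise is a prerequisite. Granting this, the genuine technical difficulty is the all-orders matching: the corrections $\mathcal{E}^{(l)}_r$ are increasingly intricate elliptic sums, and one must show that the normalized large-$n$ limit of the $\mathfrak{p}^l$-term equals the $\mathfrak{p}^l$-coefficient produced by the $\mathfrak{z}$-deformation of the Bethe roots. I expect the cleanest route is to prove directly that the eRS eigenfunction $\mathcal{Z}$ of \eqref{eq:equivaraintKthLaumon}, restricted to the locus \eqref{eq:LocusN} and suitably normalized, solves the quantum difference equation governing $QK_{q,\hbar}(\mathcal{M}_N)$---in exact parallel with the way Theorem \ref{Th:tRSeqproof} realized the vertex function as a tRS eigenfunction---thereby identifying the two families of eigenvalues simultaneously for all $r$ rather than term by term.
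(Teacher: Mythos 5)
The statement you are addressing is stated in the paper as a conjecture, not a theorem: the paper offers no proof, only supporting evidence (the reduction to the trigonometric case, the rank-one example with the Bethe equations \eqref{eq:Betherk2}, and low-order checks in $\mathfrak{p}$ attributed to \cite{Bullimore:2015fr} and \cite{Koroteev:2018a}), and explicitly defers a proof to future work along the lines of Okounkov--Pandharipande. Your proposal is therefore not being measured against an existing argument; it is a research program, and you are candid about that. Your base case is correct and is essentially the paper's own anchoring: at $\mathfrak{p}=0$ the Pochhammer prefactor in \eqref{eq:eRSenergy} equals $1$, $\mathscr{E}^{(nN)}_1$ reduces to $e_1$ by \eqref{eq:eRSEnergies}, and the claim collapses to Proposition \ref{Prop:StableLimitEigenN}, while on the K-theory side $\mathfrak{z}=0$ reduces \eqref{eq:BetheADHM} to \eqref{eq:BetheADHMp0} and quantum multiplication to classical multiplication. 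This matches the evidence assembled in Sections 4 and 5.

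The genuine gap is that every step beyond the base case is itself an open problem, and your plan does not supply a mechanism to close any of them. First, the eRS side of the comparison rests on Conjecture \ref{Conj:eRSEigenproblem}, which in turn requires a K-theoretic equivariant pushforward \eqref{eq:equivaraintKthLaumon} that the paper states has not been defined for the affine Laumon space; you name this prerequisite but do not address it. Second, the "order-by-order matching in $\mathfrak{p}$" is not an induction in any meaningful sense: knowing the $\mathfrak{p}^l$ coefficient gives no leverage on the $\mathfrak{p}^{l+1}$ coefficient, so what you describe is an infinite family of independent identities between elliptic sums (via the oscillator formalism at locus \eqref{eq:LocusN}) and $\mathfrak{z}$-expansions of Bethe roots of \eqref{eq:BetheADHM}, of which only the first has been checked. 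Third, your proposed "cleanest route" --- showing that $\mathcal{Z}$ restricted to \eqref{eq:LocusN} solves the Okounkov--Smirnov quantum difference equation for $QK_{q,\hbar}(\mathcal{M}_N)$ --- is not parallel to Theorem \ref{Th:tRSeqproof} in the way you suggest: that theorem used an explicit integral representation \eqref{eq:GenerictRSSolution} and contour-shift identities, whereas here the analogous integral representation over the affine Laumon space is exactly the undefined object, and the identification of the quantum difference connection $\textsf{M}_{\mathscr{L}}$ with the eRS flow is an additional unproven assertion. In short, the proposal correctly organizes the known evidence but does not constitute a proof, and the statement remains a conjecture.
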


In other words, there is a stable $n\to\infty$ limit of eRS energies which returns eigenvalues of the operator of quantum multiplication in  $\mathcal{M}_{N}$. The function with ratios of infinite q-Pochammer symbols in the above formula corresponds to a $U(1)$ character on the moduli space of torsion-free rank-$nN$ sheaves.

We hope that this conjecture will be proven in the near future along the lines of \cite{Okounkov:2004a} by Okounkov and Pandharipande. 
In was proven in \cite{2013arXiv1309.7094S} that
\begin{equation}
\frac{\theta_1(\hbar x|\mathfrak{p})}{\theta_1(x|\mathfrak{p})} = F(\mathfrak{p})\sum_{n\in\mathbb{Z}}\frac{x^n}{1-\hbar\mathfrak{p}^n}\,,
\end{equation}
where $F(\mathfrak{p})$ is a known function. This observation can be used to compare term-by-term expansions of the eRS eigenproblem with integrals of the moduli spaces of genus zero quasimpas to $\mathcal{M}_N$.

\subsection{Example}
The first nontrivial example is $\mathcal{M}_{2,1}=\text{Hilb}^2$. As we already discussed in the example after Proposition \ref{Prop:Eigenvalues}, there are two states with $k=2$ corresponding to $\tiny\yng(1,1)$ and $\tiny\yng(2)$ tableaux. The eigenvalue of quantum multiplication by bundle $\Lambda^2\mathscr{U}$ in $QK_{q,\hbar}(\mathcal{M}_1)$ is as follows
\begin{equation}
\mathscr{E}_2=1-(1-q)(1-\hbar)s_1 s_2\,,
\end{equation}
where Bethe roots satisfy the following equations
\begin{equation}
\frac{\left(s_1-1\right)}{\left(s_1-q^{-1}\hbar^{-1}\right)}\cdot\frac{\left(s _1-q s _2\right)\left(s _1-\hbar s _2\right)\left(s _1-q^{-1}\hbar^{-1}s_2\right)}{\left(s _1-\hbar^{-1} s_2\right) \left(1- q\hbar^{-1} s_1\right)\left(s _1-q\hbar s
   _2\right)}=\mathfrak{p}
\label{eq:Betherk2}
\end{equation}
and the second equation with $s_2$ and $s_1$ interchanged. We can solve the above equations perturbatively in $\mathfrak{p}$ and extract the value of $\mathscr{E}_2$ as a series. It was verified in \cite{Koroteev:2018a} that this expression is in agreement with the eRS energy according to 
\eqref{eq:eRSenergy}.

Let us now find the eigenvalues of operator $\textsf{M}$ on Hilb$^2(\mathbb{C}^2)$. In this case it reads (see Sec 8.3.7 of \cite{Okounkov:2016fp}) $\textsf{M}=\mathscr{O}(1) B_0 B_{1/2}$ and its first eigenvalue has the following expansion in quantum parameter
\begin{align}
&s_1=t_1-\frac{t_1 \left(t_1+1\right) \left(t_2-1\right) \left(t_1 t_2-1\right)
   }{t_1-t_2}\mathfrak{z}
   +t_1 \Big(-\frac{\left(t_1 t_2-1\right){}^3}{t_1-t_2}+t_1 t_2 \left(3 t_1 t_2-1\right)\cr
   &-\frac{t_2 \left(\left(t_2^4-2
   t_2^2+2\right) t_1^4-2 t_2 t_1^3-\left(2 t_2^4-4 t_2^2+2\right) t_1^2+2 t_2^3 t_1-2
   t_2^2+1\right)}{\left(t_1-t_2\right){}^3}\Big) \mathfrak{z}^2+\dots\,,
\end{align}
while the second eigenvalue $s_2$ is given by replacing $t_1\to t_2$. Both eigenvalues coincide individually with product $s_1s_2$, where $s_1$ and $s_2$ satisfy Bethe equations \eqref{eq:Betherk2} contingent to $t_1=q,\,t_2=\hbar$ and \eqref{eq:matchingzp}.

\subsection{DT/PT/GW Correspondence}
We would like to formulate the following conjecture for Hilbert schemes:
\begin{conjecture}
K-theoretic DT invariants on $\mathbb{P}^1\times\mathbb{C}^2$ are in one-to-one correspondence with quantum K-theory of Hilbert scheme of points
\begin{equation}
KDT(\mathbb{P}^1\times\mathbb{C}^2) \longleftrightarrow QK_{\textrm{T}}(Hilb)\,.
\end{equation}
\end{conjecture}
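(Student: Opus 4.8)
The plan is to realize the conjectured correspondence as the K-theoretic lift of the Gromov--Witten/Donaldson--Thomas triangle of \cite{Okounkov:2004a,Maulik:2008}, exploiting the fact that $\mathbb{P}^1\times\mathbb{C}^2$ fibers over $\mathbb{P}^1$ with fiber $\mathbb{C}^2$. The degeneration structure of DT theory along this fibration should turn the K-theoretic DT partition function into matrix elements of explicit operators acting on $K_{q,\hbar}(\text{Hilb})$, and the claim is that the operator attached to a relative $\mathbb{P}^1$ (the \emph{tube}) is precisely the operator of quantum multiplication in $QK_{\textsf{T}}(\text{Hilb})$ whose eigenvalues are described in Proposition \ref{eq:QuantMultConj}.

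First I would define the K-theoretic DT invariants of $\mathbb{P}^1\times\mathbb{C}^2$ via the symmetrized virtual structure sheaf $\widehat{\mathscr{O}}_{\text{vir}}$ on the Hilbert scheme of one-dimensional subschemes, equivariantly with respect to $\mathbb{C}^\times_q$ scaling $\mathbb{P}^1$ and the $(\mathbb{C}^\times)^2$ acting on $\mathbb{C}^2$, following the foundations of \cite{Okounkov:2015aa}. Second, I would impose relative conditions at the two torus-fixed fibers over $0,\infty\in\mathbb{P}^1$ and invoke the K-theoretic degeneration formula: gluing two capped fibers along a tube expresses $KDT(\mathbb{P}^1\times\mathbb{C}^2)$ as a bilinear pairing $\langle\mu\,|\,\mathsf{M}(\mathfrak{z})\,|\,\nu\rangle$ over the basis of $T$-fixed points $[\lambda]\in K_{q,\hbar}(\text{Hilb})$, where $\mathfrak{z}$ tracks the degree along $\mathbb{P}^1$. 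Third, I would identify this tube operator $\mathsf{M}(\mathfrak{z})$ with the quantum multiplication operator $\textsf{M}_{\mathscr{L}}$ of Okounkov--Smirnov \cite{Okounkov:2016fp} by showing that both solve the same $q$-difference (quantum Knizhnik--Zamolodchikov) equation and share the same classical $\mathfrak{z}\to 0$ limit.

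The decisive technical tool is the quantum difference equation. The capped vertex of the DT theory is, by the rationality (DT/PT) results underlying \cite{Okounkov:2015aa,Okounkov:2016fp}, a solution of a flat $q$-difference connection whose monodromy is governed by the same $R$-matrix and quantum Weyl-group data that define quantum multiplication on $\text{Hilb}$. Matching the two connections requires (i) verifying that the initial data agree, for which I would use the bare vertex computation of \Secref{Sec:qKZtRS} together with the Bethe equations \eqref{eq:BetheADHM}, and (ii) fixing the change of Kähler variable, which should take the form $\mathfrak{z}=-\mathfrak{p}\sqrt{q\hbar}$ already encountered in \eqref{eq:matchingzp}. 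Once the connections and their initial conditions coincide, uniqueness of solutions of the $q$-difference equation forces the DT partition function and the quantum K-theory generating series to agree term-by-term in $\mathfrak{z}$.

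The hard part will be foundational, and is precisely the gap flagged after Conjecture \ref{Conj:eRSEigenproblem}: the moduli space of sheaves on the threefold $\mathbb{P}^1\times\mathbb{C}^2$ is not a Nakajima quiver variety and is not hyperKähler, so the symmetrized virtual structure sheaf and its equivariant pushforward must first be shown to be well defined, of the correct rationality class, and compatible with degeneration in K-theory rather than only in cohomology. A secondary obstacle is proving the K-theoretic degeneration/gluing formula itself with the symmetrized normalization, since the square-root twist does not split naively across the nodes; controlling this twist under degeneration---and thereby pinning down the precise prefactor relating $KDT(\mathbb{P}^1\times\mathbb{C}^2)$ to $QK_{\textsf{T}}(\text{Hilb})$---is where I expect the main effort to lie.
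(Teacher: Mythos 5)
The statement you are addressing is stated in the paper as a conjecture with no proof; the paper explicitly says that more work is needed to properly define and prove the correspondence. So there is no argument of the author's to compare yours against, and your proposal must be judged on its own terms. As a research program it is sensible and consistent with the surrounding discussion (the cohomological triangle of \cite{Okounkov:2004a}, the quantum difference equation and the operator $\textsf{M}_{\mathscr{L}}$ of \cite{Okounkov:2016fp}, the identification \eqref{eq:matchingzp}), but it is not a proof, and the gap is not only the foundational one you flag at the end.

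The decisive step in your outline --- ``identify the tube operator $\mathsf{M}(\mathfrak{z})$ with the quantum multiplication operator by showing that both solve the same $q$-difference equation and share the same classical limit'' --- is essentially the content of the conjecture itself, not a route to it. In cohomology the analogous identification of the relative DT tube with quantum multiplication by the divisor in $QH^\bullet(\text{Hilb})$ is the main theorem of the GW/DT/Hilb triangle, proved there by explicit computation of both sides in the Fock-space formalism; asserting that the K-theoretic capped vertex satisfies the Hilbert-scheme quantum difference equation is precisely what would need to be established, and uniqueness of solutions only helps after that is done. In addition, the two foundational obstacles you name are real and independent: (i) the symmetrized virtual structure sheaf $\widehat{\mathscr{O}}_{\text{vir}}$ and its equivariant pushforward are developed in \cite{Okounkov:2015aa} for quasimap and DT settings, but the K-theoretic degeneration/gluing formula with the square-root twist does not split naively across nodes, and controlling it is a separate theorem, not a normalization; (ii) even granting degeneration, the pairing $\langle\mu\,|\,\mathsf{M}(\mathfrak{z})\,|\,\nu\rangle$ only reconstructs the full $QK_{\textsf{T}}(\text{Hilb})$ ring if one also controls multiplication by the non-divisor classes $\Lambda^l\mathscr{V}$ for $1\le l<k$, which the paper itself notes requires generalizations of \cite{Okounkov:2016fp} and which your scheme, keyed to a single line bundle $\mathscr{L}$, does not address. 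Your proposal is a reasonable map of where the difficulties lie, but each of its three steps is currently a conjecture standing in for a proof.
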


Similarly, for $N>1$ there must be a generalization of \cite{Diaconescu:2008} to K-theory relating higher rank Donaldson-Thomas invariants with ADHM moduli sheaves. More work needs to be done to properly define and prove this correspondence.

\appendix
\section{tRS Difference equation for $T^\ast \mathbb{P}^1$}\label{Sec:tRSP1}

Let us illustrate \eqref{eq:tRSclass} for $X_2=T^\ast \mathbb{P}^1$. 
The vertex function is given by
\begin{equation}
V=\frac{e^{\frac{\log\zeta_2\cdot\log a_1\cdots a_n}{\log q}}}{2\pi i}\int\limits_C \frac{ds}{s}\,e^{\frac{\log\zeta_1/\zeta_2\cdot\log s}{\log q}} \frac{\varphi\left(\hbar\frac{s}{a_1}\right)}{\varphi\left(\frac{s}{a_1}\right)}\frac{\varphi\left(\hbar\frac{s}{a_2}\right)}{\varphi\left(\frac{s}{a_2}\right)}\,.
\end{equation}
For $X_2$ there are two tRS operators
\begin{align}
T_1(\boldsymbol{\zeta}) &= \frac{\hbar\zeta_1-\zeta_2}{\zeta_1-\zeta_2} p_1 + \frac{\hbar\zeta_2-\zeta_1}{\zeta_2-\zeta_1} p_2\,,\cr
T_2(\boldsymbol{\zeta}) & = p_1 p_2\,.
\end{align}
By acting with these operators on the above function we get
\begin{align}
T_1(\boldsymbol{\zeta}) \mathrm{V} &= \mathrm{V}^{\left(T_1(s)\right)}\,,\cr
T_2(\boldsymbol{\zeta}) \mathrm{V} & = a_1 a_2 \mathrm{V}\,,
\end{align}
where the first tRS class is given by\footnote{We abusing the notation by denoting by $T_r$ both the tRS class and the operator. Hopefully this will not confuse the reader.}
\begin{equation}
T_1(s) = \frac{\hbar\zeta_1-\zeta_2}{\zeta_1-\zeta_2} s + \frac{\hbar\zeta_2-\zeta_1}{\zeta_2-\zeta_1} \frac{a_1 a_2}{s}\,,
\end{equation}
and is a linear combination of the tautological bundle  $\mathscr{V}$ and  $\Lambda^2\mathscr{W}\otimes\mathscr{V}^\ast$ over $X_2$ with coefficients dependent on quantum parameter $z=\frac{\zeta_1}{\zeta_2}$ and the equivariant parameters.

In order to prove that $\mathrm{V}^{\left(T_1(s)\right)}=(a_1+a_2)\mathrm{V}^{(1)}$ we shall use the following integral (see also \cite{Nedelin:2017aa}, Appendix D)
\begin{equation}
I_2=\int\limits_C \frac{ds}{s}\left[\frac{1}{s}\left(1-\frac{s}{a_1}\right)\left(1-\frac{s}{a_2}\right)\right]\,e^{\frac{\log z \cdot\log s}{\log q}} \frac{\varphi\left(\hbar\frac{s}{a_1}\right)}{\varphi\left(\frac{s}{a_1}\right)}\frac{\varphi\left(\hbar\frac{s}{a_2}\right)}{\varphi\left(\frac{s}{a_2}\right)}\,,
\end{equation}
where contour $C$ is chosen in such a way that shift $s\to q^{-1}s$ does not pick up any poles. This can be straightforwardly generalized to the $n$-particle tRS model. Thus integral $I_2$ in the shifted variable is equal to itself which leads us to 
\begin{equation}
0=\int\limits_C \frac{ds}{s}\frac{1}{z \hbar}\left[(1-\hbar z)s+(\hbar- z)\frac{a_1 a_2}{s}-(a_1+a_2)(1-z)\right]\,e^{\frac{\log z \cdot\log s}{\log q}} \frac{\varphi\left(\hbar\frac{s}{a_1}\right)}{\varphi\left(\frac{s}{a_1}\right)}\frac{\varphi\left(\hbar\frac{s}{a_2}\right)}{\varphi\left(\frac{s}{a_2}\right)}\,,
\end{equation}
from where the statement follows since the expression in the square brackets must vanish.

\bibliography{cpn1}

\end{document}